\let\cal\mathcal
\def\AA{{\cal A}}
\def\CC{{\cal C}}
\def\DD{{\cal D}}
\def\FF{{\cal F}}
\def\II{{\cal I}}
\def\JJ{{\cal J}}
\def\LL{{\cal L}}
\def\MM{{\cal M}}
\def\OO{{\cal O}}
\def\PP{{\cal P}}
\def\QQ{{\cal Q}}
\def\SS{{\cal S}}
\def\TT{{\cal T}}
\let\blb\mathbb
\def\bP{{\blb P}}
\def\bZ{{\blb Z}}
\def\bN{{\blb N}}
\def\bS{{\blb S}}
\def\bZ{{\blb Z}}
\let\frak\mathfrak
\def\aa{\frak{a}}
\def\bb{\frak{b}}
\def\cc{\frak{c}}
\def\ff{\frak{f}}
\def\gg{\frak{g}}
\def\pp{\frak{p}}
\def\Mod{\operatorname{Mod}}
\def\Modlfd{\operatorname{\Mod_{\text{lfd}}}}
\def\mod{\operatorname{mod}}
\def\modfd{\operatorname{mod_{\text{fd}}}}
\def\coh{\mathop{\text{\upshape{coh}}}}
\def\rad{\operatorname {rad}}
\def\Rep{\operatorname {Rep}}
\def\rep{\operatorname{rep}}
\def\Ext{\operatorname {Ext}}
\def\Hom{\operatorname {Hom}}
\def\supp{\operatorname {supp}}
\def\im{\operatorname {im}}
\def\ker{\operatorname {ker}}
\def\Ker{\operatorname {ker}}
\def\Tor{\operatorname {Tor}}
\def\r{\rightarrow}
\DeclareMathOperator{\Irr}{Irr}
\DeclareMathOperator{\Mor}{Mor}
\DeclareMathOperator{\colim}{colim}
\DeclareMathOperator{\ind}{ind}
\newcommand\Db{D^{b}}
\newcommand\Kb{K^{b}}
\renewcommand\r{d^{\bullet}}
\newtheorem{lemma}{Lemma}[section]
\newtheorem{proposition}[lemma]{Proposition}
\newtheorem{theorem}[lemma]{Theorem}
\newtheorem{corollary}[lemma]{Corollary}
\newcounter{MyCounter}
\theoremstyle{definition}
\newtheorem{example}[lemma]{Example}
\newtheorem{definition}[lemma]{Definition}
\newtheorem{construction}[lemma]{Construction}
\newtheorem{observation}[lemma]{Observation}
{

}
\theoremstyle{remark}
\newtheorem{remark}[lemma]{Remark}
\newdimen\uboxsep \uboxsep=1ex
\def\uboxn#1{\vtop to 0pt{\hrule height 0pt depth 0pt\vskip\uboxsep
\hbox to 0pt{\hss #1\hss}\vss}}
\def\uboxs#1{\vbox to 0pt{\vss\hbox to 0pt{\hss #1\hss}
\vskip\uboxsep\hrule height 0pt depth 0pt}}
\def\Ob{\operatorname{Ob}}
\def\Free{\operatorname{Free}}
\def\CQ{\mathfrak{CondQuiver}}
\def\Cat{\mathfrak{Cat}}
\def\kVar{\mbox{$k$-$\mathfrak{Var}$}}
\def\CAT{\mathfrak{CAT}}
\newcommand\exa{\nopagebreak \begin{center}\smallskip \nopagebreak               \begin{minipage}[t]{0.5\textwidth}\sloppy}
\newcommand\exb{\end{minipage}\begin{minipage}[t]{0.5\textwidth}\sloppy}
\newcommand\exc{\end{minipage}\smallskip\end{center}}
\title{Representations of Thread Quivers}
\author{Carl Fredrik Berg}
\author{Adam-Christiaan van Roosmalen}
\address{Carl Fredrik Berg\\Institutt for matematiske fag\\
NTNU\\7491 Trondheim\\Norway\\
(Currently working for Statoil R\&D Centre\\Arkitekt Ebbells veg 10\\Rotvoll\\7053 Trondheim\\Norway)} \email{carlpaatur@hotmail.com}
\address{Adam-Christiaan van Roosmalen\\Fakult\"at f\"ur Mathematik\\ Universit\"at Bielefeld\\D-33501 Bielefeld\\Germany}\email{vroosmal@math.uni-bielefeld.de}
\subjclass[2010]{16G20,18A25}
\begin{document}

\bibliographystyle{amsplain}

\begin{abstract}
We introduce thread quivers as an (infinite) generalization of quivers, and show that every $k$-linear ($k$ algebraically closed) hereditary category with Serre duality and enough projectives is equivalent to the category of finitely presented representations of a thread quiver.  In this way, we obtain an explicit construction of a new class of hereditary categories with Serre duality.
\end{abstract}

\maketitle

\setcounter{tocdepth}{1}
\tableofcontents

\section{Introduction}

We will assume all categories are $k$-linear for an algebraically closed field $k$.  In representation theory, one often considers the category of finite-dimensional $k$-representations of a finite acyclic quiver.  The category of representations is then an Ext-finite hereditary category with Serre duality and enough projectives.

One possible generalization is to replace the `finite quiver' with a `strongly locally finite quiver', i.e. a quiver such that all the indecomposable projective and injective representations have finite $k$-dimension.  Again, the category of finite-dimensional $k$-representations of a strongly locally finite quiver is an Ext-finite hereditary category with Serre duality and enough projectives.


In this paper, we provide a further generalization by replacing the strongly locally finite quiver with a strongly locally finite \emph{thread quiver}.  Roughly speaking, a strongly locally finite thread quiver can be thought of as a strongly locally finite quiver where some arrows have been replaced with locally discrete linearly ordered sets (thus combining quivers with the posets considered in \cite{Ringel02}).  We will give a more precise definition of a thread quiver further in this introduction.

The following theorem is our main result (it will be proven in Section \ref{section:Representations}).

\begin{restatable}{maintheorem}{theoremB}\label{theorem:EnoughProjectives}
\begin{enumerate}
\item The category of finitely presented representations of a strongly locally finite thread quiver is a hereditary Ext-finite category with Serre duality and enough projectives.
\item A hereditary Ext-finite category with Serre duality and enough projectives is equivalent to the category of finitely presented representations of a strongly locally finite thread quiver.
\end{enumerate}
\end{restatable}



Our interest in thread quivers comes from a project to understand and classify hereditary category satisfying some additional homological properties.  Highlights of this project include Happel's classification of hereditary categories having a tilting object (\cite{Happel01}) and Reiten-Van den Bergh's classification of noetherian hereditary categories with Serre duality (\cite{ReVdB02}).

In the latter, a new type of hereditary category with Serre duality was encountered, which is generated by preprojectives but does not have enough projectives; these categories were constructed by formally inverting a right Serre functor of a related category (we refer to \cite{ReVdB02} for details).  In \cite{Ringel02b}, Ringel gave an alternative construction using ray quivers.  In \cite{BergVanRoosmalen08}, it was shown that every noetherian category generated by preprojective objects is derived equivalent to the category of finite-dimensional representations of a strongly locally finite quiver.

When considering hereditary categories which have Serre duality, but which are not necessarily noetherian, Reiten suggests in \cite{Reiten02} to consider categories generated by preprojective objects as a possible first step.  Generalizing the techniques from \cite{BergVanRoosmalen08}, we have shown in \cite{BergVanRoosmalen10} that every such category is derived equivalent to a hereditary category with Serre duality and enough projectives.  Theorem \ref{theorem:EnoughProjectives} classifies these categories by thread quivers, completing the project proposed in \cite{Reiten02} up to derived equivalence.

Thread quivers will be used to classify a certain kind of additive categories, and the modules over those categories will be the representations of the thread quiver.  To make this more precise, we recall some definitions.

A \emph{finite $k$-variety} is a Hom-finite additive category $\aa$ where idempotents split.  The functors $\aa(-,A)$ and $\aa(A,-)^*$ from $\aa$ to $\mod k$ will be called \emph{standard projective modules} and \emph{standard injective modules}, respectively.  We will write $\mod \aa$ for the category of contravariant functors $\aa \to \mod k$ which are finitely presentable by standard projectives.

We will say that a finite $k$-variety $\aa$ is \emph{dualizing} if and only if (Proposition \ref{proposition:Dualizing}) $\aa$ has pseudokernels and pseudocokernels (thus $\mod \aa$ and $\mod \aa^\circ$ are abelian, where $\aa^\circ$ is the dual category of $\aa$), every standard projective object is cofinitely generated by standard injectives, and every standard injective object is finitely generated by standard projectives.


The following theorem will be proven in Section \ref{section:Dualizing}.

\begin{restatable}{maintheorem}{theoremA}\label{theorem:RepSerreDuality}
Let $\aa$ be a finite $k$-variety.  The following are equivalent:
\begin{enumerate}
\item $\mod \aa$ is abelian and has Serre duality,
\item $\aa$ is a dualizing $k$-variety and every object of $\mod \aa$ has finite projective and finite injective dimension.
\end{enumerate}
\end{restatable}

A finite $k$-variety $\aa$ is called \emph{semi-hereditary} if and only if the category $\mod \aa$ is abelian and hereditary.  It has been shown (\cite{AuslanderReiten75, vanRoosmalen06}, see Proposition \ref{proposition:ShIsLocal}) that $\aa$ is semi-hereditary if and only if every full (preadditive) subcategory with finitely many elements is semi-hereditary.

Intuitively, a semi-hereditary dualizing $k$-variety can be seen as the free $k$-linear path category $kQ$ of a strongly locally finite quiver $k$ without relations, but where some arrows are replaced with infinite locally discrete (= without accumulation points) linearly ordered posets.  To accommodate this extra information, we will introduce thread quivers.

A \emph{thread quiver} consists of the following information:
\begin{itemize}
\item A quiver $Q=(Q_0,Q_1)$ where $Q_0$ is the set of vertices and $Q_1$ is the set of arrows.
\item A decomposition $Q_1 = Q_s \coprod Q_t$.  Arrows in $Q_s$ will be called \emph{standard arrows}, while arrows in $Q_t$ will be referred to as \emph{thread arrows}.
\item For every thread arrow $t$, there is an associated linearly ordered set $\PP_t$, possibly empty.
\end{itemize}

With every thread arrow $\xymatrix@1{t:x \ar@{..>}[r]^-{\PP_t} & y}$ in $Q$ we associate a locally discrete (= there are no accumulation points) linearly ordered poset $\LL_t$ with a minimal and a maximal element: namely $\LL_t = \bN \cdot (\PP_t \stackrel{\rightarrow}{\times}\bZ) \cdot -\bN$, where $A \cdot B$ means ``first $A$, then $B$'' and $\PP_t \stackrel{\rightarrow}{\times}\bZ$ is the poset $\PP_t \times \bZ$ with the lexicographical ordering (see Figure \ref{fig:Poset}).

\begin{figure}\label{fig:Poset}
$$\xymatrix@C=1pt@R=5pt{\bN & \cdot & \PP_t \stackrel{\rightarrow}{\times}\bZ & \cdot & -\bN \\
*+[F.]{0 \to 1 \to 2 \to \cdots} &\cdots &*+[F.]{\cdots \to (p,-1) \to (p,0) \to (p,1) \to \cdots}&\cdots & *+[F.]{\cdots \to -2 \to -1 \to -0} }$$
\caption{The poset $\LL_t$}
\end{figure}
The poset $\LL_t$ is interpreted as a category in the usual sense, and $k\LL_t$ will denote the associated $k$-linear additive category.

To recuperate the semi-hereditary dualizing $k$-variety from the thread quiver $Q$, we need to replace the thread arrows with the corresponding linearly ordered posets.  Since it is cumbersome --albeit possible-- to do this ``by hand'', we will prefer a more global approach: a 2-pushout.

For a thread quiver $Q$, denote by $Q_u$ the underlying ``regular'' quiver, thus where all the thread arrows are regular (and unlabeled) arrows.  By $kQ_u$ we will denote the normal $k$-linear additive path category of $Q_u$.

For every thread arrow $\xymatrix@1{t:x_t \ar@{..>}[r]^-{\PP_t} & y_t}$ of $Q$, there are associated functors $k(x_t \to y_t) \to kQ_u$ and $k(x_t \to y_t) \longrightarrow k\LL_t$ where $\LL_t = \bN \cdot (\PP_t \stackrel{\rightarrow}{\times}\bZ) \cdot -\bN$ as above.  The semi-hereditary dualizing $k$-variety $kQ$ is then defined as a 2-pushout of
$$\xymatrix{
\bigoplus_{t \in Q_t} k (\cdot \to \cdot) \ar[r]^-f \ar[d]_g & {k Q_u} \\
\bigoplus_{t \in Q_t} k\LL_t
}$$
effectively replacing the thread arrows in $Q$ with the required linearly ordered posets.  It will be shown in Theorem \ref{theorem:ThreadQuivers} that every semi-hereditary dualizing $k$-variety is obtained in this way.

%


An added advantage of the construction of $kQ$ as a 2-pushout is that we gain the following description of the category $\rep_k Q = \mod kQ$: the objects of the category $\rep Q$ are given by the following data
\begin{enumerate}
\item a finitely presented representation $N(-):kQ_u \to \mod k$ of $kQ_u$,
\item for every thread $t$, a finitely presented representation $L_t(-):\LL_t \to \mod k$, and
\item a natural equivalence $\alpha: \oplus_t L_t(g-) \Rightarrow N(f-)$.
\end{enumerate}
The morphisms are given by the modifications, thus given the data $(N,\{L_t\}_t, \alpha)$ and $(N',\{L'_t\}_t, \alpha')$ of two representations, a morphism is given by
\begin{enumerate}
\item a natural transformation $\beta: N \Rightarrow N'$,
\item a natural transformation $\gamma: \oplus_t L_t \Rightarrow \oplus_t L'_t$
\end{enumerate}
such that the following diagram commutes:
$$\xymatrix{
\oplus_t L_t(g-) \ar[r]^-\alpha \ar[d]_{\gamma \bullet 1_g} & N(f-) \ar[d]^{\beta \bullet 1_f}\\
\oplus_t L'_t(g-) \ar[r]_-{\alpha'} & N'(f-)
}$$

{\bf Acknowledgments} The authors would like to thank Idun Reiten, Sverre Smal\o, and Michel Van den Bergh for their many useful discussions and helpful ideas.  We would also like to thank Tobias Fritz for pointing us to \cite{Waschkies04}, and Catharina Stroppel for her useful comments on an earlier version of this article.  The second author gratefully acknowledges the hospitality and support of the Max-Planck-Institut f\"{u}r Mathematik in Bonn, the Norwegian University of Science and Technology, and the University of Regina.

\renewcommand{\r}{\to}  

\section{Preliminaries}

\subsection{Notations and conventions}

Throughout, let $k$ be an algebraically closed field.  All considered categories will be assumed to be $k$-linear unless explicitly mentioned.  When $V$ is a $k$-vector space, then $V^*$ denotes the dual vector space, thus $V^* = \Hom_k(V,k)$.

When $\PP_1, \leq_1$ and $\PP_2, \leq_2$ are posets, we will denote by $\PP_1 \cdot \PP_2,\leq$ the poset with underlying set $\PP_1 \coprod \PP_2$ and with a partial ordering $\leq$ given by
\begin{eqnarray*}
a \leq b \Leftrightarrow \left\{
\begin{array}{l}
\mbox{$a,b \in \PP_1$ and $a \leq_1 b$} \\
\mbox{$a,b \in \PP_2$ and $a \leq_2 b$} \\
\mbox{$a \in \PP_1$ and $b \in \PP_2$}
\end{array} \right.
\end{eqnarray*}

If $\PP_1,\leq_1$ and $\PP_2,\leq_2$ are posets, then we will write $\PP_1 \stackrel{\rightarrow}{\times} \PP_2,\leq$ for the partially ordered set with underlying set $\PP_1 \times \PP_2$ endowed with the lexicographical ordering.

When writing a poset we will usually suppress the partial ordering, thus writing $\PP$ instead of $\PP,\leq$.  We will always interpret a poset as a small category in the usual way, thus $\PP(a,b)$ consists of a single element if and only if $a \leq b$.  Note that the category $\PP$ is thus not $k$-linear.  We will write $k\PP$ for the $k$-linear additive category generated by $\PP$ (see also Section \ref{subsection:2Colimits}), thus the objects of $k\PP$ are finite direct sums of objects in $\PP$ and $k\PP(a,b) \cong k$ when $a,b \in \PP$ and $a \leq b$.

Let $\CC$ be a Krull--Schmidt category.  By $\ind \CC$ we will denote a set of chosen representatives of isomorphism classes of indecomposable objects of $\CC$.  If $\CC'$ is a Krull--Schmidt subcategory of $\CC$, then we will assume $\ind \CC' \subseteq \ind \CC$.

For $X,Y \in \ind \CC$, we will denote $[X,Y]$ for the full replete (= closed under isomorphisms) additive subcategory of $\CC$ such that for all $Z \in \ind \CC$, we have $Z \in \ind [X,Y] \Leftrightarrow \CC(X,Z) \not= 0$ and $\CC(Z,Y) \not= 0$.  The subcategories $[X,Y[, ]X,Y]$ and $]X,Y[$ are defined in a straightforward manner.

\subsection{Serre duality}
Let $\CC$ be a Hom-finite triangulated $k$-linear category.  A \emph{Serre functor} \cite{BondalKapranov89} on $\CC$ is an additive auto-equivalence $\bS : \CC \to \CC$ such that for every $X,Y \in \Ob \CC$ there are isomorphisms
$$\Hom(X,Y) \cong \Hom(Y,\bS X)^*$$
natural in $X$ and $Y$, and where $(-)^*$ is the vector-space dual.

We will say that $\CC$ has \emph{Serre duality} if $\CC$ admits a Serre functor.  An abelian category $\AA$ is said to satisfy Serre duality if the bounded derived category $\Db \AA$ has Serre duality.

\subsection{Quivers and free categories}\label{subsection:FreeCategories}
A quiver $Q$ is a pair $(Q_0,Q_1)$ consisting of a small set $Q_0$ of vertices and a small set $Q_1$ of arrows between the vertices such that between two elements of $Q_0$, the set of arrows is a small set.  Note that we allow multiple arrows and loops.  A map $f: Q \to Q'$ between quivers consists of a map $f_0: Q_0 \to Q'_0$ of objects and a map $f_1: Q_1 \to Q'_1$ of arrows such that $f_1(A \to B)$ is an arrow from $f_0(A)$ to $f_0(B)$.

We will say a quiver is \emph{finite} if both $Q_0$ and $Q_1$ are finite.  A quiver $Q$ is \emph{locally finite} if every vertex is incident to only a finite number of arrows and $Q$ is said to be \emph{strongly locally finite} if it is locally finite and has no infinite paths of the form $\bullet \rightarrow \bullet \rightarrow \cdots$ or $\cdots \rightarrow \bullet \rightarrow \bullet$.  In particular, a strongly locally finite quiver has no oriented cycles (and hence no loops).

Equivalently, a quiver is strongly locally finite if and only if all indecomposable projective and injective representations have finite length.

A \emph{commutativity condition} on a quiver is a pair of oriented paths, both starting in the same vertex and ending in the same vertex.  We write $\CQ$ for the category whose objects are given by quivers with a set of commutativity conditions and whose morphisms are given by morphisms of quivers $Q \to Q'$ mapping a commutativity condition of $Q$ to a commutativity condition of $Q'$.  Note that the category of quivers is a full subcategory of $\CQ$.

Let $\Cat$ be the 1-category of small categories and functors.  There is an obvious (faithful) forgetful functor $\Cat \to \CQ$ admitting a left adjoint $\Free: \CQ \to \Cat$ (see \cite[Proposition 5.1.6]{Borceux94}).  For a quiver $Q$, the category $\Free Q$ is called the \emph{free category} or the \emph{path category} of $Q$.  Note that the free category of a quiver is not $k$-linear.

\subsection{2-categories and 2-colimits}\label{subsection:2Colimits}
We will give a brief overview of the notation and terminology of 2-categories we will use.  Our main references are \cite{Borceux94, Waschkies04}.  Let $\Cat$ be the 1-category of all small categories, thus the objects are given by small categories and the morphisms by functors.

A \emph{(strict) 2-category}, also called a $\Cat$-category (see \cite{Borceux94b, Kelly05}), is a category enriched over $\Cat$.  Our main example will be $\kVar$, the 2-category of all small $k$-linear additive categories with split idempotents:
\begin{itemize}
\item the 0-cells are given by small $k$-linear additive categories with split idempotents,
\item the 1-cells are the $k$-linear functors,
\item the 2-cells are natural transformations.
\end{itemize}

\begin{definition}
A small $k$-linear additive category with split idempotents will also be called a \emph{$k$-variety} (following \cite{Auslander74, AuslanderReiten74}).  We refer to Section \ref{section:Dualizing} for more information.
\end{definition}

Composition of 1-cells is denoted by $\circ$.  Following \cite{MacLane71, Waschkies04} we will write $\circ$ for vertical composition of 2-cells and $\bullet$ for horizontal composition, as illustrated in Figure \ref{fig:Compositions}.  We have the following equation
\begin{equation}\label{equation:Compatibility}
(\gamma \bullet \delta) \circ (\alpha \bullet \beta)= (\gamma \circ \alpha) \bullet (\delta \circ \beta)
\end{equation}

\begin{figure}
	\centering
		$$\xymatrix@1@C=+70pt{X\ruppertwocell^F{\alpha} \rlowertwocell_H{\beta} \ar[r]|G & Y &
		X\rtwocell^{F_1}_{F_2}{\alpha} &Y \rtwocell^{G_1}_{G_2}{\beta} &Z}$$
  \caption{The left diagram illustrates the vertical composition $\beta \circ \alpha: F \Rightarrow H$ and the right diagram illustrates the horizontal composition $\beta \bullet \alpha: G_1 \circ F_1 \Rightarrow G_2 \circ F_2$.}
  \label{fig:Compositions}
\end{figure}

We proceed to define a 2-colimit over a 2-functor.
\begin{definition}\label{definition:TwoFunctor}
Let $\II$ be a small 1-category.  A \emph{2-functor (with strict identity)} $\aa: \II \to \kVar$ is given by the following data:
\begin{enumerate}
\item a 0-cell $\aa(i)$ of $\kVar$ for every $i \in \Ob \II$,
\item a 1-cell $\aa(s): \aa(i) \to \aa(j)$ of $\kVar$ for every morphism $s:i \to j$ in $\II$ and $\aa(1_i) = 1_{\aa(i)}$ for all $i \in \Ob \II$, and
\item a natural equivalence $\Phi(s,t): \aa(t \circ s) \stackrel{\sim}{\rightarrow} \aa(t) \circ \aa(s)$ for all composable morphisms $s,t \in \Ob \II$,
\end{enumerate}
satisfying the following condition: for three composable morphisms $u,t,s \in \Mor \II$, we have the following commutative diagram
$$\xymatrix@C=80pt{
\aa(u \circ t \circ s) \ar[r]^{\Phi(t \circ s, u)} \ar[d]_{\Phi(s, u \circ t)} & \aa(u)\aa(t \circ s) \ar[d]^{1_{\aa(u) \bullet \Phi(s,t)}} \\
\aa(u \circ t)\aa(s) \ar[r]^{\Phi(t,u) \bullet 1_{\aa(s)}} & \aa(u) \aa(t) \aa(s)
}$$
\end{definition}

\begin{example}
For every object $\CC$ of $\kVar$, there is a 2-functor $\CC: \II \to \kVar$ sending every object of $\II$ to $\CC$ and sending every morphism of $\CC$ to the identity on $\CC$.
\end{example}

\begin{definition}
Let $\aa, \bb: \II \to \kVar$ be two 2-functors.  A \emph{2-natural transformation} $\ff: \aa \to \bb$ consists of the following data:
\begin{enumerate}
\item a 1-cell $\ff_i: \aa(i) \to \bb(i)$ of $\kVar$ for every $i \in \Ob \II$, and
\item a natural equivalence $\theta^\ff_s: \bb(s) \circ \ff_i \to \ff_j \circ \aa(s)$ for every morphism $s:i \to j$ in $\II$.
\end{enumerate}
such that for any two composable morphisms $s:i \to j$, $t: j \to k$ in $\II$, we have the following commutative diagram
$$\xymatrix@C=80pt{
\bb(t \circ s) \circ \ff_i \ar[r]^{\Phi^{\bb}(s,t) \bullet 1_{\ff_i}} \ar[dd]_{\theta^\ff_{t \circ s}}& \bb(t)\circ \bb(s) \circ \ff_i \ar[d]^{1_{\bb(t)} \bullet \theta^\ff_s}\\
& \bb(t) \circ \ff_i \circ \aa(s) \ar[d]^{\theta^\ff_t \bullet 1_{\aa(s)}} \\
\ff_k \circ \aa(t \circ s) \ar[r]^{\ff_k \bullet \Phi^{\aa}(s,t)} & \ff_k \circ \aa(t) \circ \aa(s)
}$$
\end{definition}

\begin{definition}
Let $\aa, \bb: \II \to \kVar$ be 2-functors and $\ff, \gg: \aa \to \bb$ be 2-natural transformations.  A \emph{modification} $\Lambda: \ff \to \gg$ consists of a 2-cell $\Lambda_i: \ff_i \to \gg_i$ for all objects $i \in \II$ such that for all $s:i \to j$ in $\II$ the following diagram commutes
$$\xymatrix{
\bb(s) \circ \ff_i \ar[r]^{\theta^\ff_s} \ar[d]_{1_{\bb(s)} \bullet \Lambda_i}&\ff_j \circ \aa(s) \ar[d]^{\Lambda_j \bullet 1_{\aa(s)}} \\
\bb(s) \circ \gg_i \ar[r]^{\theta^\gg_s} & \gg_j \circ \aa(s)}$$
\end{definition}

\begin{definition}
The 2-functors, 2-natural transformations, and modifications form a (strict) 2-category called $2\FF(\II,\kVar)$.
\end{definition}

We can now give the definition of a 2-colimit.

\begin{definition}
Let $\aa: \II \to \kVar$ be a 2-functor.  We say $\aa$ admits a 2-colimit if and only if there exist
\begin{enumerate}
\item a $k$-linear additive category with split idempotents $2\colim \aa$, and
\item a 2-natural transformation $\sigma: \aa \to 2\colim \aa$,
\end{enumerate}
such that for every category $\CC$ the functor
$$(- \circ \sigma): \Hom_{\kVar}(2\colim \aa, \CC) \to \Hom_{2\FF}(\aa, \CC)$$
is an equivalence of categories.
\end{definition}

\begin{remark}\label{remark:Modifications}
The fact that $(- \circ \sigma)$ is fully faithful can be restated as follows:  Let $\ff,\gg: \aa \to \CC$ be 2-natural transformations and let $\Lambda: \ff \to \gg$ be a modification.  Let $f,g: 2\colim \aa \to \CC$ be two functors corresponding to $\ff$ and $\gg$, respectively.  Then there is a unique natural transformation $\alpha: f \to g$ such that the following diagram commutes:
$$\xymatrix{
\ff_i \ar[r]^{\Lambda_i} \ar[d] & \gg_i \ar[d] \\
f \circ \sigma_i \ar[r]_{\alpha \bullet 1_{\sigma_i}}& g \circ \sigma_i}$$
\end{remark}

\begin{remark}
Although the universal property of a 2-colimit is stated for covariant functors, it also holds for contravariant functors.
\end{remark}

We have the following result (\cite[Theorem A.3.4]{Waschkies04}).

\begin{theorem}
Denote by $\CAT$ the 2-category of all small categories.  Let $\II$ be a small 1-category, and $\aa: \II \to \CAT$ a 2-functor.  Then $\aa$ admits a 2-colimit.
\end{theorem}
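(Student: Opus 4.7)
The plan is to give a direct construction of $2\colim \aa$ as a category presented by generators and relations, and then verify the universal property against the definition given above. (One could alternatively exhibit $2\colim \aa$ as a localisation of the Grothendieck construction $\int \aa$ at the cocartesian morphisms, which is the approach taken in \cite{Waschkies04}; both routes end up producing the same category.)

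For the construction, take as objects of $2\colim \aa$ all pairs $(i, X)$ with $i \in \Ob \II$ and $X \in \Ob \aa(i)$. Take as morphisms the quotient of the free category on two families of generators, namely $\bar f : (i, X) \to (i, Y)$ for each $f: X \to Y$ in $\aa(i)$, and formal isomorphisms $\eta^X_s : (i, X) \to (j, \aa(s) X)$ for each $s: i \to j$ in $\II$ and each $X \in \Ob \aa(i)$, modulo the relations that $f \mapsto \bar f$ is a functor on each $\aa(i)$, that $\eta^Y_s \circ \bar f = \overline{\aa(s)(f)} \circ \eta^X_s$ for each $f: X \to Y$ in $\aa(i)$, that $\eta^{\aa(s) X}_t \circ \eta^X_s = \eta^X_{t \circ s}$ for composable $s,t$ in $\II$, and that $\eta^X_{1_i} = 1_{(i,X)}$. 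Define $\sigma_i : \aa(i) \to 2\colim \aa$ by $\sigma_i(X) = (i, X)$ on objects and $\sigma_i(f) = \bar f$ on morphisms, and define the natural equivalence $\theta^\sigma_s : \sigma_i \to \sigma_j \circ \aa(s)$ componentwise by $(\theta^\sigma_s)_X = \eta^X_s$. The imposed relations make $\sigma$ into a 2-natural transformation essentially by construction.

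For the universal property, given a 2-natural transformation $\ff : \aa \to \CC$, define $F : 2\colim \aa \to \CC$ by $F(i, X) = \ff_i(X)$, $F(\bar f) = \ff_i(f)$, and $F(\eta^X_s) = (\theta^\ff_s)_X$; the last is invertible because $\theta^\ff_s$ is a natural equivalence, and the four families of relations in $2\colim \aa$ translate, respectively, to functoriality of $\ff_i$, naturality of $\theta^\ff_s$, the hexagon coherence axiom on $\theta^\ff$ for composable morphisms, and the normalisation $\theta^\ff_{1_i} = 1_{\ff_i}$. Thus $F$ is well defined and satisfies $F \circ \sigma \cong \ff$. Conversely, any functor $G: 2\colim \aa \to \CC$ gives back a 2-natural transformation $G \circ \sigma$, and these assignments are mutually inverse up to canonical isomorphism. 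For the 2-cell part of the universal property, a modification $\Lambda : \ff \to \gg$ provides components $\Lambda_i : \ff_i \to \gg_i$ satisfying the pentagon in the definition of a modification, and this data assembles uniquely into a natural transformation between the two induced functors on $2\colim \aa$: on generators $\bar f$ this is forced by naturality, and on $\eta^X_s$ it is forced by the modification pentagon, so existence and uniqueness hold simultaneously.

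The main obstacle is purely bureaucratic: verifying that the four families of relations above are precisely what is needed, so that (i) $F$ is a well-defined functor, and (ii) every modification extends uniquely across the presentation. Neither step is conceptually difficult, but one must align the coherence data of 2-natural transformations in the sense of Subsection \ref{subsection:2Colimits} with the relations imposed on the generators, and in particular use equation (\ref{equation:Compatibility}) when checking compatibility between horizontal and vertical composition of 2-cells. Once this matching is in place, the equivalence of categories required in the definition of a 2-colimit follows.
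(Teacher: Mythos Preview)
Your argument is correct. The paper does not itself prove this theorem (it is attributed to \cite[Theorem A.3.4]{Waschkies04}) but, immediately following the statement, sketches the construction as the localisation $\left(\int_\II \aa\right)[\SS^{-1}]$ of the Grothendieck construction at the cocartesian morphisms---precisely the alternative you mention in your parenthetical remark. The two descriptions coincide: every morphism $(s,f)$ of $\int_\II \aa$ factors as $\bar f \circ \eta^X_s$ in your notation, your four families of relations are the composition law of $\int_\II \aa$ rewritten in terms of this factorisation, and formally inverting the $\eta^X_s$ is equivalent to inverting $\SS$ (since any $(s,f)\in\SS$ differs from $\eta^X_s$ by the already invertible $\bar f$). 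What your presentation buys is that the universal property becomes a tautology once the relations are matched against the axioms for a 2-natural transformation and a modification, so essential surjectivity and full faithfulness of $(-\circ\sigma)$ drop out with no further work; the localisation description is more intrinsic but leaves the verification to \cite{Waschkies04}. One minor sharpening: with your choices one actually obtains $F\circ\sigma=\ff$ on the nose, so $(-\circ\sigma)$ is an isomorphism of categories, not merely an equivalence.
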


One may construct this 2-colimit as follows (\cite{Waschkies04}).  We start by constructing a category $\int_\II \aa$.  We set
$$\Ob \int_\II \aa = \{(i,X) \mid i \in \Ob \II, X \in \Ob \aa(i)\} = \coprod_{i \in \II} \Ob \aa(i).$$
and
$$\Hom((i,X),(j,Y)) = \{(s,f) \mid s \in \II(i,j), f \in \Hom_{\aa(j)}(\aa(s)(X),Y) \}.$$
The composition is defined as $(t,g) \circ (s,f) = (t \circ s, g \circ \aa(t)(f))$.  Note that for every $i \in \Ob \II$ there is a natural functor $\aa(i) \to \int_\II \aa$.

We consider the set $\SS$ of morphisms of $\int_\II \aa$ given by
$$\mbox{$\SS = \{(s,f): (i,X) \to (j,Y) \mid f:\aa(s)(X) \to Y$ is an isomorphism$ \}$}.$$

It is then shown that $\left( \int_\II \aa \right)[\SS^{-1}]$ together with the canonical functors $\aa(i) \to \left( \int_\II \aa \right)[\SS^{-1}]$ is a 2-colimit of $\aa$.

We will be interested in 2-colimits in the 2-category $\kVar$ of $k$-varieties.  In general, 2-colimits taken in $\CAT$ of $k$-varieties need not be $k$-varieties.  In this paper, we will follow the outline below.

Let $k\CAT$ be the 2-category of all small $k$-linear categories.  The forgetful 2-functor  $U:k\CAT \to \CAT$ (called a morphism of bicategories in \cite{Benabou67, Ross80}) has a left adjoint $\CAT \to k\CAT$ which associates to every Hom-set a vector space generated by that Hom-set (see \cite[Proposition 6.4.7]{Borceux94b} or \cite[Section 2.5]{Kelly05}).

Likewise, the embedding 2-functor $\kVar \to k\CAT$ from the 2-category of small $k$-linear categories to the 2-category of $k$-varieties also has a left adjoint $k\CAT \to \kVar$ as given in \cite[Proposition 2.3 and Corollary 2.4]{Auslander74}.

Combining these two statements shows that the forgetful 2-functor $F:\kVar \to \CAT$ has a left adjoint $k:\CAT \to \kVar$.  Thus for each small category $\CC$ and each $k$-variety $\DD$, there is an equivalence $\Hom_{\CAT}(\CC, F\DD) \cong \Hom_{\kVar}(k\CC,\DD)$ natural in both components.

In particular, the functor $k:\CAT \to \kVar$ commutes with 2-colimits (see \cite{Ross80}).  Thus if $\aa: \II \to \CAT$ is a 2-functor (as defined in Definition \ref{definition:TwoFunctor}), then $k(2\colim \aa) \cong 2\colim(k \circ \aa)$, where the first 2-colimit is taken in $\CAT$ and the second 2-colimit is taken in $\kVar$.  All 2-colimits we will consider in this article will be of this form.

\begin{example}\label{example:PushOut}
Consider the quivers
$$\xymatrix@R=3pt{
A_2:& x \ar[r] & y \\
A_4:& a \ar[r] & b \ar[r] & c \ar[r] & d \\
&1 \ar[dr] &&& 5 \\ Q: && 3 \ar[r]& 4 \ar[ru] \ar[rd] \\ &2 \ar[ur] &&& 6
}$$

and consider the fully faithful functors given by
\exa
\begin{eqnarray*}
\Free A_2 &\to& \Free A_4 \\
x &\mapsto& a \\
y &\mapsto& d
\end{eqnarray*}
\exb
\begin{eqnarray*}
\Free A_2 &\to& \Free Q \\
x &\mapsto& 3 \\
y &\mapsto& 4.
\end{eqnarray*}
\exc
The 2-colimit in $\CAT$ of the 2-functor $\aa: \II \to \CAT$ whose image is given by
$$\xymatrix{A_2 \ar[r] \ar[d] & Q \\ A_4}$$
can be described as follows.  The category $\int_\II \aa$ is the free category generated by the quiver
$$\xymatrix@R=4pt{
& a \ar[r] & b \ar[r] & c \ar[r] & d \\ \\
& x \ar[uu] \ar[dd] \ar[rrr]&&& y \ar[dd] \ar[uu] \\
1 \ar[dr] &&&&& 5 \\ & 3 \ar[rrr] &&& 4 \ar[ru] \ar[rd] \\ 2 \ar[ur] &&&&& 6}$$
with the extra conditions that the two rectangles commute.  The category $(\int_\II \aa) [\SS^{-1}]$ is given by formally inverting the vertical arrows.  In this way, one obtains a category equivalent to the free category of the quiver $Q'$ given by
$$\xymatrix@R=4pt{\cdot \ar[dr] &&&&& \cdot \\ & \cdot \ar[r] & \cdot \ar[r] & \cdot \ar[r] & \cdot \ar[ru] \ar[rd] \\ \cdot \ar[ur] &&&&& \cdot}$$

When replacing the free categories associated to $A_2, A_4,$ and $Q$ by the categories $kA_2, kA_4,$ and $kQ$, the 2-colimit taken in $\kVar$ is the category $kQ'$.
\end{example}

\section{Preadditive semi-hereditary categories}\label{section:SemiHereditary}

Let $\aa$ be a small preadditive category.  A right $\aa$-module is a contravariant functor from $\aa$ to $\Mod k$, the category of all vector spaces.  The category of all right $\aa$-modules is denoted by $\Mod \aa$.

If $f:\frak{a} \to \frak{b}$ is a functor between small preadditive categories then there is an obvious restriction functor
$$(-)_{\frak{a}}:\Mod(\frak{b})\r \Mod(\frak{a})$$
which sends $N$ to $N \circ f$. This restriction functor has a left adjoint 
$$- \otimes_{\aa} \bb:\Mod(\frak{a})\r \Mod(\frak{b})$$
which is the right exact functor which sends the projective generators
$\frak{a}(-,A)$ in $\Mod(\frak{a})$ to $\frak{b}(-,f(A))$ in $\Mod(\frak{b})$. As usual if $f$ is fully faithful
we have $(N \otimes_{\frak{a}} \frak{b})_\frak{a}=N$.

Let $M$ be in $\Mod(\frak{a})$. We will say that $M$ is \emph{finitely generated} if $M$ is a quotient object of a finitely generated projective.  We say that $M$ is \emph{finitely presented} if $M$ has a presentation
$$P\to Q\to M\to 0$$
where $P,Q$ are finitely generated projectives. It is easy to see that these notions coincides with the ordinary categorical ones.  The full subcategory $\Mod \aa$ spanned by the finitely presented modules will be denoted by $\mod \aa$.  If $\mod \aa$ is an abelian category, we will say that $\aa$ is \emph{coherent}.

Dually we will say that $M$ is \emph{cofinitely generated}\index{cofinitely generated} if it is contained in a cofinitely generated injective. \emph{Cofinitely presented}\index{cofinitely presented} is defined in a similar way.

The categorical interpretation of the latter notions is somewhat less clear.  However if $\frak{a}$ is Hom-finite then both finitely and cofinitely presented representations correspond to each other under duality (exchanging $\frak{a}$ and $\frak{a}^\circ$).

It is well known that $\aa$ is coherent if and only if it has \emph{pseudokernels}, i.e. if for every morphism $A \to B$ in $\aa$ there is a morphism $K \to A$ such that
$$\aa(-,K) \to \aa(-,A) \to \aa(-,B)$$
is exact.  Pseudocokernels are defined in a dual way.

With every object $A$ of $\aa$, we may associate a \emph{standard projective} $\aa(-,A)$ and a \emph{standard injective} $\aa(A,-)^*$.  It is clear that every finitely generated projective is a direct summand of a finite direct sum of standard projectives.  If $\aa$ has finite direct sums and idempotents split in $\aa$, then every finitely generated projective is isomorphic to a standard projective.  Dual notions hold for injective objects.

A morphism $A \to B$ in $\aa$ is a \emph{radical morphism} if there are no maps $X \to A$ and $B \to X$ such that the composition
$$X \to A \to B \to X$$
is an isomorphism, where $X$ is not zero.  The set of all radical morphisms in $\aa(A,B)$ is denoted by $\rad(A,B)$; it is a subspace of $\aa(A,B)$.  With an indecomposable object $A \in \ind \aa$, we may associate in a straightforward way the \emph{standard simple} object $S_A$ as $\aa(-,A) / \rad(-,A)$.

We say that a small preadditive category $\frak{a}$ is \emph{semi-hereditary}\index{category!semi-hereditary} if the finitely presented objects $\mod(\frak{a})$ in $\Mod(\frak{a})$ form an abelian and hereditary category.  The following proposition (\cite{vanRoosmalen06}, see also \cite[Theorem 1.6]{AuslanderReiten75}) makes it easy to recognize semi-hereditary categories in examples.

\begin{proposition}\label{proposition:ShIsLocal}
Let $\frak{a}$ be a small preadditive category, then $\frak{a}$ is semi-hereditary if and only if any full subcategory of $\frak{a}$ with a finite number of objects is semi-hereditary.
\end{proposition}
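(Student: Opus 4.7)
The proposition asserts that semi-hereditariness is a local property, testable on finite full subcategories. My plan is to exploit the fact that a morphism between finitely generated projectives in $\mod \aa$ is encoded by a finite matrix of morphisms in $\aa$, hence involves only finitely many objects; this should reduce kernel computations in $\mod \aa$ to kernel computations in $\mod \bb$ for a suitable finite $\bb$. The central tool will be the adjunction between the extension of scalars $\iota_{!}: \mod \bb \to \mod \aa$ (sending $\bb(-,B) \mapsto \aa(-,B)$ for $B \in \bb$) and the exact restriction $\iota^{*}: \mod \aa \to \mod \bb$; fullness of $\bb$ in $\aa$ yields $\iota^{*} \iota_{!} \cong \id$ on finitely generated projectives indexed by objects of $\bb$.

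For the backward direction, I would first show that $\aa$ has pseudokernels. Given $f: A \to B$ in $\aa$ and an arbitrary test object $X \in \aa$, I would apply the hypothesis to the finite full subcategory $\bb_{X} = \{A,B,X\}$ to obtain a pseudokernel $g_{X}: K_{X} \to A$ with $K_{X} \in \bb_{X}$, giving exactness of $\bb_{X}(-,K_{X}) \to \bb_{X}(-,A) \to \bb_{X}(-,B)$ at $X$. I would then assemble these local pseudokernels into a pseudokernel in $\aa$ by comparing choices inside common (still finite) enlargements, where the semi-hereditariness hypothesis ensures compatibility up to canonical isomorphism. For the hereditary part, any morphism $\phi$ between finitely generated projectives in $\mod \aa$ involves only finitely many objects; restricting to the finite full subcategory $\bb$ generated by these, the hypothesis yields a finitely generated projective kernel $\bb(-,K)$ of $\phi|_\bb$ in $\mod \bb$, and extending back via $\iota_{!}$ should produce $\aa(-,K)$ as the kernel of $\phi$ in $\mod \aa$.

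For the forward direction, given semi-hereditary $\aa$ and a finite full $\bb \subseteq \aa$, for a morphism $\psi$ between finitely generated projectives in $\mod \bb$ the extension $\iota_{!}\psi$ is a morphism between finitely generated projectives in $\mod \aa$, whose kernel is finitely generated projective by hypothesis; exactness of $\iota^{*}$ then gives $\ker \psi = \iota^{*} \ker \iota_{!} \psi$. The main obstacle—and what I expect to be the hardest step—is that $\iota^{*}$ does not in general preserve projectivity: if $\ker \iota_{!} \psi$ is a summand of $\oplus_{k} \aa(-,C_{k})$ with some $C_{k} \notin \bb$, then $\iota^{*} \aa(-,C_{k})$ is a priori not a standard $\bb$-projective. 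Overcoming this should rely on showing that in the semi-hereditary setting the kernel may be chosen using only objects already present in $\psi$ (after closing under finite direct sums and summands), so that the $C_{k}$ can be arranged inside $\bb$; equivalently, one directly verifies that $\iota^{*} \aa(-,C)$ is finitely generated projective in $\mod \bb$ for every $C \in \aa$, exploiting that semi-hereditariness forces the requisite factorizations of morphisms $X \to C$ with $X \in \bb$ through objects of $\bb$.
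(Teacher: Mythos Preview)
The paper does not supply its own proof, deferring instead to \cite{vanRoosmalen06} and \cite[Theorem 1.6]{AuslanderReiten75}, so there is nothing to compare against directly. That said, your framework is the right one, but both directions contain a genuine gap, and a single missing observation fills both. In the forward direction you correctly locate the obstacle and offer two resolutions; resolution (b) fails in general (take $\aa$ with two objects $B,C$, endomorphism rings $k$, $\aa(C,B)=0$, and $\aa(B,C)$ infinite-dimensional: this is semi-hereditary, but $\aa(-,C)|_{\{B\}}$ is not finitely generated over $\End(B)=k$), while resolution (a) is correct but left unproven. The key you are missing is that, since $\mod\aa$ is hereditary, the \emph{image} of $\iota_!\psi$ is a finitely generated submodule of a projective and hence itself projective; therefore $0\to\ker\iota_!\psi\to P\to\im\iota_!\psi\to 0$ splits and $\ker\iota_!\psi$ is a direct summand of the domain $P$. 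As $P$ is built from objects of $\bb$ (and one may pass to the additive Karoubi closure), so is the kernel, and (a) follows immediately. Equivalently, $\aa$ is semi-hereditary exactly when every morphism in its additive Karoubi closure factors as a split epimorphism followed by a map $m$ with $\aa(-,m)$ monic; this is the characterisation underlying the Auslander--Reiten reference.

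The same observation closes your backward direction. Your assertion that ``extending back via $\iota_!$ should produce $\aa(-,K)$ as the kernel of $\phi$'' presupposes exactness of $\iota_!$, which you have not established and cannot yet take from Proposition~\ref{proposition:SemiHereditaryExact} applied to $\bb\hookrightarrow\aa$, since $\aa$ is not yet known to be semi-hereditary. Instead: in the finite $\bb$ containing the objects of $\phi$, the splitting above gives $\phi=m\circ e$ with $e$ split epi and $\bb(-,m)$ mono. To see that $\aa(-,m)$ is mono at an arbitrary test object $X$, enlarge to $\cc=\bb\cup\{X\}$ and use semi-heredity of $\cc$ to factor $\phi=m'\circ e'$; both image objects are summands of the common domain and their restrictions to $\bb$ coincide, so Yoneda forces them to be isomorphic, whence $\cc(-,m)$ is mono and in particular $\aa(X,m)$ is injective. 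This replaces your ``assembling local pseudokernels'' sketch, which as written does not produce a single object independent of $X$, and it simultaneously handles coherence and heredity without separating them.
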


\begin{lemma}\label{lemma:TensorIsFF}
Let $\frak{b}\to \frak{c}$ be a full embedding of preadditive categories. Then $- \otimes_{\bb} \cc : \mod(\bb) \to \mod(\cc)$ is fully faithful.
\end{lemma}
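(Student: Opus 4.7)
The plan is to derive the statement as a formal consequence of the adjunction
$$\bigl(\frak{c}\otimes_{\frak{b}}-\bigr) \dashv (-)_{\frak{b}},$$
together with the identity $(\frak{c}\otimes_{\frak{b}}N)_{\frak{b}} = N$ that the preliminaries already record for fully faithful embeddings. Since a left adjoint is fully faithful precisely when its unit is an isomorphism, the second fact will do all the work.

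First I would note that $\frak{c}\otimes_{\frak{b}}-$ indeed restricts to a functor $\mod(\frak{b}) \to \mod(\frak{c})$: it is right exact and sends the standard projective $\frak{b}(-,B)$ to the standard projective $\frak{c}(-,f(B))$, so it carries a finite presentation to a finite presentation. Next, for $M,N \in \mod(\frak{b})$, adjunction gives a natural isomorphism
$$\Hom_{\frak{c}}(\frak{c}\otimes_{\frak{b}}M,\, \frak{c}\otimes_{\frak{b}}N) \;\cong\; \Hom_{\frak{b}}\bigl(M,\,(\frak{c}\otimes_{\frak{b}}N)_{\frak{b}}\bigr),$$
and the right-hand side equals $\Hom_{\frak{b}}(M,N)$ by the cited identity. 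A quick triangle-identity check shows that the composite coincides with the map induced by $\frak{c}\otimes_{\frak{b}}-$, so this functor is fully faithful.

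If it is desirable to reprove the identity $(\frak{c}\otimes_{\frak{b}}N)_{\frak{b}}=N$ within the argument, I would verify it first on standard projectives: full faithfulness of $f$ gives
$$(\frak{c}\otimes_{\frak{b}}\frak{b}(-,B))_{\frak{b}} \;=\; \frak{c}(f(-),f(B)) \;=\; \frak{b}(-,B),$$
and this extends to arbitrary $N$ by writing $N$ as a cokernel (or colimit) of standard projectives and using that both $\frak{c}\otimes_{\frak{b}}-$ and $(-)_{\frak{b}}$ preserve cokernels (the latter being exact).

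There is no real obstacle here; the only point requiring care is the verification that the isomorphism displayed above is the one induced by the functor $\frak{c}\otimes_{\frak{b}}-$, which amounts to unpacking the unit-counit formulas for the adjunction.
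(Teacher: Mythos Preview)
Your argument is correct, but it runs in the opposite logical direction from the paper's. The paper's proof is a single sentence: ``This may be checked on objects of the form $\frak{b}(-,B)$ where it is clear.'' That is, it verifies full faithfulness directly on standard projectives via Yoneda and full faithfulness of $\frak{b}\to\frak{c}$, and leaves to the reader the routine extension to arbitrary finitely presented modules by taking presentations in each variable. Only \emph{after} the lemma does the paper deduce the identity $(\frak{c}\otimes_{\frak{b}}-)_{\frak{b}}\cong\id$ as a consequence, using the adjunction.

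You instead start from that identity (which the preliminaries do already record) and invoke the unit criterion for full faithfulness of a left adjoint. This is equally valid and arguably more transparent, since it makes explicit the mechanism --- the adjunction --- that in the paper's proof is hidden inside the extension step. Both arguments bottom out in the same Yoneda computation on $\frak{b}(-,B)$; the paper's version is terser, yours is more self-contained and shows clearly why the claim follows formally once the unit is known to be invertible.
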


\begin{proof} This may be checked on objects of the form $\bb(-,B)$ where it is clear. 
\end{proof}

Since $(- \otimes_{\bb} \cc,(-)_{\frak{b}})$ is an adjoint pair, if follows from this lemma that $(- \otimes_{\bb} \cc)_{\frak{b}}$ is naturally isomorphic to the identity functor on $\mod(\frak{b})$.

The following result is a slight generalization of \cite[Proposition 2.2]{AuslanderReiten75}.
  
\begin{proposition}
\label{proposition:SemiHereditaryExact}
Let $\frak{b}\to \frak{c}$ be a full embedding of semi-hereditary categories. 
Then the (fully faithful) functor $-\otimes_{\bb} \cc:\mod(\frak{b})\to \mod(\frak{c})$
is exact.
\end{proposition}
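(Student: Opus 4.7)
The functor $\frak{c}\otimes_{\frak{b}}-$ is a left adjoint, hence already right exact; the content of the proposition is therefore left exactness, equivalently the vanishing $\Tor_1^{\frak{b}}(\frak{c},M)=0$ for every $M\in\mod(\frak{b})$. Since $\frak{b}$ is semi-hereditary, I can pick a two-term projective resolution $0\to P_1\to P_0\to M\to 0$ in $\mod(\frak{b})$, and after possibly enlarging I may arrange that $P_i=\bigoplus_j \frak{b}(-,B_{i,j})$ is a finite direct sum of standard projectives with all $B_{i,j}\in\frak{b}$. Applying $\frak{c}\otimes_{\frak{b}}-$ yields a right exact sequence
$$\frak{c}\otimes P_1\xrightarrow{\ f\ }\frak{c}\otimes P_0\to \frak{c}\otimes M\to 0$$
in $\mod(\frak{c})$, and the task is to show $K:=\ker f=0$.

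First I would observe that $K$ is a direct summand of $\frak{c}\otimes P_1$. Indeed, $\im f$ is finitely generated (as a quotient of $\frak{c}\otimes P_1$) and sits as a submodule of the finitely generated projective $\frak{c}\otimes P_0$ in the hereditary abelian category $\mod(\frak{c})$, hence is itself projective; consequently the exact sequence $0\to K\to \frak{c}\otimes P_1\to \im f\to 0$ splits.

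Next I would apply the exact restriction functor $(-)_{\frak{b}}\colon \Mod(\frak{c})\to\Mod(\frak{b})$ to the four-term exact sequence $0\to K\to \frak{c}\otimes P_1\to \frak{c}\otimes P_0\to \frak{c}\otimes M\to 0$. Using the identity $(\frak{c}\otimes_{\frak{b}} N)_{\frak{b}}=N$ for fully faithful embeddings (recorded in the excerpt just after the first lemma), the restricted sequence is precisely $0\to K_{\frak{b}}\to P_1\to P_0\to M\to 0$, so exactness of the original resolution forces $K_{\frak{b}}=0$.

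The finishing step is the one I expect to be the main conceptual obstacle, since a priori a nonzero finitely generated projective $\frak{c}$-module can perfectly well restrict to zero on $\frak{b}$ (take any standard projective at an object of $\frak{c}$ receiving no maps from $\frak{b}$). What unblocks the argument is that the splitting projection $p\colon \frak{c}\otimes P_1\twoheadrightarrow K$ must itself be zero: by Yoneda,
$$\Hom_{\mod(\frak{c})}(\frak{c}\otimes P_1,\,K)=\bigoplus_j \Hom_{\mod(\frak{c})}(\frak{c}(-,B_{1,j}),\,K)=\bigoplus_j K(B_{1,j}),$$
and the right-hand group vanishes because each $B_{1,j}\in\frak{b}$ and $K_{\frak{b}}=0$. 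Hence $p=0$ and therefore $K=0$. The decisive structural point is that the representing objects of the summands of $\frak{c}\otimes P_1$ all lie in $\frak{b}$, which is exactly what allows Yoneda to read off morphisms into $K$ from the restriction $K_{\frak{b}}$.
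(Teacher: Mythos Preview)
Your proof is correct and follows essentially the same line as the paper's: both show that the kernel $K$ splits off from $\frak{c}\otimes_{\frak{b}} P_1$, restricts to zero on $\frak{b}$, and then use Yoneda at the objects $B_i\in\frak{b}$ to conclude that the splitting projection is zero. The only cosmetic difference is that the paper leaves $P_1$ as a direct summand of $\bigoplus_i\frak{b}(-,B_i)$ rather than enlarging to make it equal to such a sum (your argument in fact only uses the enlargement for $P_1$, where it is easily arranged).
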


\begin{proof}
Let $M\in \mod(\frak{b})$ and consider a projective resolution $0\r P_1\xrightarrow{\theta} P_0 \r M\r 0$ in $\mod(\frak{b})$. Then $P_1$ is a direct summand of some $\oplus_{i=1}^n\frak{b}(-,B_i)$.  Put $K=\ker(\theta \otimes_{\bb} \cc )$. Then $K$ is a direct summand of $P_1 \otimes_{\bb} \cc$ (since $\frak{c}$ is semi-hereditary) and $K_{\frak{b}}=0$ (since $\theta$ is injective).

Assume that $K$ is non-zero. Since $K$ is a direct summand of $P_1 \otimes_{\bb} \cc$ we obtain a non-zero map $P_1 \otimes_{\bb} \cc \to K$ and hence a non-zero map $\oplus_{i=1}^n\frak{c}(-,B_i)\to K$ and thus ultimately a non-zero element of some $K(B_i)$, contradicting the fact that $K_{\frak{b}}=0$.

Thus $K=0$. If we denote the left satellites of $-\otimes_{\bb} \cc$ by $\Tor_i^{\frak{b}}(\frak{c},-)$ then we have just shown that $\Tor_i^{\frak{b}}(\frak{c},-)=0$ for $i>0$. Hence $-\otimes_{\bb} \cc$ is exact. 
\end{proof}

\section{\texorpdfstring{Dualizing $k$-varieties}{Dualizing k-varieties}}\label{section:Dualizing}

We recall some definitions from \cite{Auslander74, AuslanderReiten74}.  A Hom-finite additive $k$-linear category where idempotents split is called a \emph{finite $k$-variety}.  Such a finite $k$-variety is always Krull--Schmidt.

Denote by $\Modlfd \aa$ the abelian category of locally finite-dimensional right $\aa$-modules, thus the full subcategory of $\Mod \aa$ spanned by all contravariant functors from $\aa$ to $\mod k$.  Note that an additive $k$-linear category where idempotents split is a finite $k$-variety if and only if every standard projective and standard injective lies in $\Modlfd \aa$.

Let $\frak{a}$ be a finite $k$-variety.  There is a duality $D: \Modlfd \aa \to \Modlfd \aa^\circ$ given by sending a module $M \in \Ob \Modlfd \aa$ to the dual $D(M)$ where $D(M)(x) = \Hom_k(M(x), k)$ for all $x \in \frak{a}$.  If this functor induces a duality $D: \mod \frak{a} \to \mod \frak{a}^\circ$ by restricting to the finitely presented objects, then we will say that $\frak{a}$ is a \emph{dualizing $k$-variety}.

The following proposition gives a different characterization of dualizing $k$-varieties (compare with \cite[Theorem 2.4]{AuslanderReiten74}).

\begin{proposition}\label{proposition:Dualizing}
Let $\frak{a}$ be a finite $k$-variety, then $\frak{a}$ is a dualizing $k$-variety if and only if 
\begin{enumerate}
\item $\aa$ has pseudokernels and pseudocokernels, and
\item every standard projective is cofinitely presented and every standard injective is finitely presented.
\end{enumerate}
\end{proposition}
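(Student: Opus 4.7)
The plan is to prove the two implications separately, pivoting on the exact contravariant equivalence $D:\Modlfd\aa \to \Modlfd\aa^\circ$ and the observation that $D$ interchanges standard projectives with standard injectives attached to the same object. Concretely, one checks directly that $D(\aa(-,A))$ coincides with $\aa^\circ(A,-)^*$ (the standard injective of $\aa^\circ$ at $A$) and that $D(\aa^\circ(-,A))$ coincides with $\aa(A,-)^*$; this interchange is the workhorse of both directions.

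For the ($\Rightarrow$) direction, assume $\aa$ is dualizing. Condition (2) is almost immediate from the interchange: $\aa(A,-)^*=D(\aa^\circ(-,A))$ lies in $\mod\aa$, so standard injectives of $\aa$ are finitely presented; applying $D$ to a finite projective presentation of $D(\aa(-,A))\in\mod\aa^\circ$ (which exists because this is a standard injective of $\aa^\circ$, which by the duality lies in $\mod\aa^\circ$) produces an injective copresentation $0\to \aa(-,A)\to I_0\to I_1$ by standard injectives of $\aa$. For condition (1), one must show $\mod\aa$ is closed under kernels (closure under cokernels is automatic in any module category). Given $f:M_1\to M_2$ in $\mod\aa$ with kernel $K\in \Modlfd\aa$, exactness of $D$ turns $K$ into $\coker(Df)\in\mod\aa^\circ$ (using closure of $\mod\aa^\circ$ under cokernels of maps between its objects); a second application of $D$ places $K$ in $\mod\aa$. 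Thus $\mod\aa$ is abelian, so $\aa$ has pseudokernels; by the symmetric argument applied to $\aa^\circ$, $\aa$ has pseudocokernels.

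For the ($\Leftarrow$) direction, I first note that the hypotheses in (2) are self-dual along $D$: applying $D$ to the injective copresentation of $\aa(-,A)$ produces a finite projective presentation of $D(\aa(-,A))$ by standard projectives of $\aa^\circ$, so standard injectives of $\aa^\circ$ are finitely presented; dually, standard projectives of $\aa^\circ$ are cofinitely presented. Condition (1) guarantees that both $\mod\aa$ and $\mod\aa^\circ$ are abelian subcategories of their ambient module categories. Taking $M\in\mod\aa$ with presentation $\aa(-,A)\xrightarrow{\varphi}\aa(-,B)\to M\to 0$ and applying $D$ yields an exact sequence $0\to DM\to D(\aa(-,B))\xrightarrow{D\varphi} D(\aa(-,A))$; the two target objects are standard injectives of $\aa^\circ$, finitely presented by the previous step, so $DM$ is the kernel of a morphism between finitely presented objects in the abelian category $\mod\aa^\circ$ and is itself finitely presented. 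A symmetric argument shows $D$ carries $\mod\aa^\circ$ into $\mod\aa$, and since $D^2\cong \Id$, the induced functor is a duality.

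The main obstacle will be the bookkeeping around self-duality of condition (2) along $D$: one needs the finite-presentability of standard injectives of $\aa^\circ$ (a consequence, not an assumption) in order to conclude that $D(\aa(-,B))$ and $D(\aa(-,A))$ lie in $\mod\aa^\circ$ at the decisive step of ($\Leftarrow$). Once this symmetric formulation is established, everything else is a formal consequence of the exactness of $D$ and the abelian structure of $\mod\aa$ and $\mod\aa^\circ$ guaranteed by condition (1).
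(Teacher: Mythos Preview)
Your proposal is correct and follows essentially the same approach as the paper: both directions hinge on the exactness of $D$, the interchange of standard projectives and standard injectives under $D$, and the fact that pseudokernels/pseudocokernels make $\mod\aa$ and $\mod\aa^\circ$ abelian. The only cosmetic difference is that in the ($\Leftarrow$) direction the paper starts from $X\in\mod\aa^\circ$ and uses condition~(2) for $\aa$ directly (avoiding your intermediate self-duality step), whereas you start from $M\in\mod\aa$ and first transport~(2) to $\aa^\circ$; both routes are equally valid and amount to the same argument read in opposite directions.
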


\begin{proof}
Assume that $\aa$ is a dualizing $k$-variety, so that there is a duality $D: \mod \aa \to \mod \aa^\circ$.  Since $\mod \aa^\circ \subseteq \Modlfd \aa^\circ$ is closed under taking cokernels, we know that $\mod \aa \subseteq \Modlfd \aa$ is closed under taking kernels, and hence $\aa$ has pseudokernels.  Likewise, one shows that $\aa^\circ$ has pseudokernels or, dually, that $\aa$ has pseudocokernels.

Let $I$ be a standard injective in $\Modlfd \aa$, so that $DI$ is a standard projective in $\Modlfd \aa^\circ$.  We then know that $DI \in \Ob \mod \aa^\circ$ and thus by duality $I \in \Ob \mod \aa$ so that $I$ is finitely presented.

Let $P$ be a standard projective in $\mod \aa$.  Since $D$ induces a duality between $\mod \aa$ and $\mod \aa^\circ$, we know that $DP$ is an object in $\mod \aa^\circ$, hence finitely presented.  Taking the vector space dual of a projective resolution in $\mod \aa^\circ$ yields an injective resolution in $\mod \aa$.  We obtain that $P$ is cofinitely presented.

For the other direction, let $X \in \mod \aa^\circ$.  Since $X$ is finitely presented, the dual $DX \in \Modlfd \aa$ is cofinitely presented, thus there is an exact sequence
$$0 \to DX \to I \stackrel{f}{\rightarrow} J$$
in $\Modlfd \aa$.  By assumption $\aa$ has pseudokernels such that $\mod \aa$ is an exact subcategory of $\Modlfd \aa$.  It now follows that $DX \in \mod \aa$ so that the functor $D: \Modlfd \aa^\circ \to \Modlfd \aa$ restricts to a functor $D: \mod \aa^\circ \to \mod \aa$.  

Similarly, one shows that there is a functor $D:\mod \aa \to \mod \aa^\circ$ and since $D^2 \cong 1$, these are equivalences.  This shows that $\aa$ is a dualizing $k$-variety.
\end{proof}

\begin{remark}
It follows from Proposition \ref{proposition:Dualizing} that every object in the abelian category $\mod \aa$ has a projective resolution and an injective resolution.  These resolutions need not to be finite.
\end{remark}

\begin{remark}
If $\aa$ is a dualizing $k$-variety, then both $\mod \aa$ and $\mod \aa^\circ$ are abelian.
\end{remark}

\begin{example}
Let $\PP$ be the poset given by $\PP=\{a,b,c\}$ with $a \leq c$ and $b \leq c$, and let $\QQ$ be the poset $\bN \cdot \PP$, thus we may think of $\QQ$ as
$$\xymatrix{&&&&&a \ar[rd] \\
0 \ar[r] & 1 \ar[r] & 2 \ar[r] & \ar@{..}[r]&\ar[ru]\ar[rd]&&c \\
&&&&&b \ar[ru]}$$
Denote $\aa = k\QQ$.  All the standard projective representations of $\aa$ are cofinitely presented and all the standard injective representations are finitely presented, but $\aa$ does not have pseudokernels.  Indeed, the kernel of the obvious map $\aa(-,a \oplus b) \to \aa(-,c)$ is not finitely generated.  We conclude that $\aa$ is not a dualizing $k$-variety.
\end{example}

\begin{observation}\label{observation:MoreDualizing}
\begin{enumerate}
\item
If $Q$ is a strongly locally finite quiver (i.e. all the indecomposable projective and injective representations have finite dimension as $k$-vector space) then the $k$-linear additive path category $kQ$ of $Q$ is a semi-hereditary dualizing $k$-variety.
\item
Let $\aa$ be a dualizing $k$-variety, then $\mod \aa$ is also a dualizing $k$-variety (\cite[Proposition 2.6]{AuslanderReiten74}).
\item
Let $\aa$ be a triangulated finite $k$-variety with Serre duality, then $\aa$ is a dualizing $k$-variety.  The pseudokernels and pseudocokernels are given by the cones, and Serre duality shows that the subcategory of standard injectives and the subcategory of standard projectives coincide.  (Moreover, a triangulated finite $k$-variety is a dualizing $k$-variety if and only if it satisfies Serre duality \cite[Proposition 2.11]{IyamaYoshino08})
\item
It follows from \cite{AuslanderSmalo81} that any functorially finite full subcategory $\bb$ of a dualizing $k$-variety $\aa$ is again dualizing.  This is the case when, for example, the full embedding $\bb \to \aa$ admits a left and a right adjoint (cf. Proposition \ref{proposition:DualizingIfLeftAndRightAdjoint}).
\end{enumerate}
\end{observation}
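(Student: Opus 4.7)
The observation bundles four independent claims of very different depth, so I would treat them one at a time, leaning in each case on the criterion in Proposition~\ref{proposition:Dualizing}.

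For (1), the key observation is that strong local finiteness of $Q$ forbids both infinite paths and (by iteration) oriented cycles, and, via a König's lemma argument applied to the tree of paths ending at a fixed vertex $v$ (finite backward branching, no infinite backward branch), forces this tree to be finite. Hence the standard projective $P_v = kQ(-,v)$ has finite total dimension, and dually for $I_v$; in particular both are simultaneously finitely and cofinitely presented, settling condition (2) of Proposition~\ref{proposition:Dualizing}. For condition (1) note that any single morphism in $kQ$ is supported on finitely many vertices, so pseudokernels and pseudocokernels can be computed inside a finite full subcategory of $kQ$, which is (the Karoubi closure of) the path category of a finite acyclic quiver where the claim is classical. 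The semi-hereditary property then follows via Proposition~\ref{proposition:ShIsLocal} by the same reduction, combined with the fact that finite-dimensional path algebras of acyclic quivers are hereditary.

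For (2), I would defer to \cite[Proposition 2.6]{AuslanderReiten74}: the plan is to verify the two conditions of Proposition~\ref{proposition:Dualizing} for $\mod \aa$. Since $\aa$ is dualizing, $\mod \aa$ is already abelian, so it trivially has kernels and cokernels (hence a fortiori pseudokernels and pseudocokernels). The standard (co)projectives $\mod\aa(-,M)$ are then shown to admit resolutions by objects of the form $\mod\aa(-,P)$ with $P$ standard projective in $\aa$, and the (co)finite-presentation follows from the corresponding resolutions of $M$ in $\mod \aa$ guaranteed by the remark after Proposition~\ref{proposition:Dualizing}.

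For (3), the strategy is the one already hinted at in the statement. Given $f:X\to Y$ in $\aa$, complete to a distinguished triangle $Z\to X\to Y\to Z[1]$; then $Z\to X$ is a pseudokernel of $f$ because $\aa(-,W)$ is a homological functor for every $W$, and dually for pseudocokernels. For the (co)finite-presentation of standard projectives and injectives, Serre duality supplies the natural isomorphism $\aa(A,X)^* \cong \aa(X,\bS A)$, which says that the standard injective $I_A$ is isomorphic to the standard projective $P_{\bS A}$; thus every standard projective is itself a standard injective (up to shift by $\bS^{-1}$), and the two (co)finiteness conditions of Proposition~\ref{proposition:Dualizing} become automatic.

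For (4), I would appeal to \cite{AuslanderSmalo81}: functorial finiteness of $\bb \subseteq \aa$ produces left and right $\bb$-approximations of objects of $\aa$, and these transport pseudokernels, pseudocokernels, and the (co)presentations of standard (co)projectives from $\aa$ to $\bb$. The existence of left and right adjoints to the inclusion is a convenient sufficient condition for functorial finiteness, which is handled separately in Proposition~\ref{proposition:DualizingIfLeftAndRightAdjoint}. The only real difficulty lies in (1), where one must be careful with the Karoubi completion implicit in the construction of $kQ$, and in (3), where one must check that the standard (co)projectives are genuinely finitely presented and not merely representable; in both cases the available structure (strong local finiteness, respectively the Serre functor) makes the check essentially formal.
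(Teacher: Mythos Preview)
The paper does not supply a separate proof of this Observation; the only justification it offers is the parenthetical hints and external citations already written into the statement itself. Your proposal is a correct and faithful expansion of precisely those hints: for (3) you run the cone/Serre-functor argument the statement sketches, for (2) and (4) you defer to the same cited sources, and for (1) your K\"onig's-lemma reduction to finite acyclic subquivers is the natural argument (indeed, the paper takes finite total dimension of $P_v$ and $I_v$ as an equivalent formulation of ``strongly locally finite'' in \S\ref{subsection:FreeCategories}, so much of your work there is already absorbed into the definition). There is nothing further to compare against.
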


\begin{example}\label{example:A2}
Let $A_2$ be the quiver $\cdot \rightarrow \cdot$, then the categories $kA_2$, $\mod kA_2$, and $\Db \mod kA_2$ are dualizing $k$-varieties.
\end{example}

\begin{example}
The category $\coh \bP^1$ is not a dualizing $k$-variety since the standard projective module $\Hom(-,\OO_{\bP^1}) \in \mod (\coh \bP^1)$ is not cofinitely generated.  However, $\Db \coh \bP^1$ has Serre duality and is thus a dualizing $k$-variety.
\end{example}

When $\aa$ is a dualizing $k$-variety, then $\mod \aa$ does not necessarily satisfy Serre duality (see Example \ref{example:NoSerreDuality}).  The following theorem gives necessary and sufficient conditions (compare with \cite[Theorem 3.5]{Chen06}).

\theoremA*

\begin{proof}
We denote by $\PP$ and $\II$ the full additive subcategories of $\Mod \aa$ generated by the standard projectives and standard injectives, respectively.

First, assume that $\aa$ is a dualizing $k$-variety and that every object of $\mod \aa$ has finite projective and finite injective dimension.  In this case, the canonical embeddings $\Kb \PP \to \Db \Modlfd \aa$ and $\Kb \II \to \Db \Modlfd \aa$ induce equivalences of $\Kb \PP$ and $\Kb \II$ with $\Db \mod \aa$.  Let $N: \Kb \PP \to \Kb \II$ be the equivalence induced by the Nakayama functor $\PP \to \II$ given by sending $\aa(-,A)$ to $\aa(A,-)^*$.  We define an autoequivalence $\bS: \Db \mod \aa \to \Db \mod \aa$ by
$$\bS:\Db \mod \aa \cong \Kb \PP \stackrel{N}{\rightarrow} \Kb \II \cong \Db \mod \aa.$$
Every $X,Y \in \Db \mod \aa$ correspond to bounded complexes of projectives in $\Kb \PP$ and in order to show $\bS$ is a Serre functor we may reduce to the case where $X \cong \aa(-,A)$ and $Y \cong \aa(-,B)$.  The required isomorphism
$$\Hom(\aa(-,A),\aa(-,B)) \cong \Hom(\aa(-,B), \aa(A,-)^*)^*$$
follows from the Yoneda lemma.

For the other direction, we assume that $\mod \aa$ is abelian and has Serre duality.  For $A,B \in \Ob \mod \aa$, we have $\Ext^i_{\mod \aa}(A,B) \cong \Hom_{\Db \mod \aa}(A,B[i]) \cong \Hom_{\Db \mod \aa}(B[i],\bS A)^*$, where $\bS$ is a Serre functor on $\Db \mod \aa$, such that $\Ext^i(A,-)$ can only be nonzero for finitely many $i$'s.  This shows that every object has finite projective dimension.  Likewise one shows every object has finite injective dimension.

Since every object of $\mod \aa$ has a finite projective resolution, the natural embedding $\Kb \PP \to \Db \mod \aa$ is an equivalence.  There is an equivalence $i: \Kb \II \to \Db \mod \aa$ given by the composition
$$\Kb \II \stackrel{N^{-1}}{\longrightarrow} \Kb \PP \cong \Db \mod \aa \stackrel{\bS}{\longrightarrow} \Db \mod \aa.$$
We want to show that $\aa(A,-)^* \in \Modlfd \aa$ is isomorphic to $i\aa(A,-)^* \in \Db \mod \aa$.  Consider the isomorphisms
\begin{eqnarray*}
\aa(A,-)^* (B) = \aa(A,B)^* &\cong& \Hom_{\Db \mod \aa}(\aa(-,A), \aa(-,B))^* \\
&\cong& \Hom_{\Db \mod \aa}(N^{-1} \aa(A,-)^*, \aa(-,B))^* \\
&\cong& \Hom_{\Db \mod \aa}(\aa(-,B), (\bS \circ N^{-1}) \aa(A,-)^*) \\
&\cong& \Hom_{\Db \mod \aa}(\aa(-,B), i \aa(A,-)^*) \\
&\cong& (i \aa(A,-)^*) (B)
\end{eqnarray*}
where the first and the last isomorphisms are given by the Yoneda Lemma.  Since the above isomorphisms are natural in $A$ and $B$, these isomorphisms give a natural equivalence between $\aa(A,-)^* \in \Modlfd \aa$ and $i\aa(A,-)^* \in \Db \mod \aa$ as required.  Hence $i: \Kb \II \to \Db \mod \aa \subseteq \Db \Modlfd \aa$ is naturally equivalent to the canonical embedding $\Kb \II \to \Db \Modlfd \aa$, implying that every object of $\Db \mod \aa$ has a finite resolution by standard injectives and that every standard injective is finitely presented.

Since $\mod \aa$ is abelian, $\aa$ has pseudokernels.  Since every standard injective is finitely presented, the full subcategory $\II \subseteq \Modlfd \aa$ lies in $\mod \aa$.  Moreover, every $M \in \mod \aa$ is cofinitely presented and hence $\mod (\aa^\circ) \cong (\mod \aa)^\circ$.  Hence $\mod (\aa^\circ)$ is abelian and $\aa$ has pseudocokernels.  Proposition \ref{proposition:Dualizing} yields that $\aa$ is a dualizing $k$-variety.
\end{proof}

\begin{remark}
The main ingredient in the above proof is to lift the Nakayama functor $\PP \to \II$ to obtain an equivalence $N:\Kb \PP \to \Kb \II$.  This is a common way to construct a Serre functor and has been applied in \cite{Chen06, Happel88, MazorchukStroppel08, ReVdB02}, for example.  In particular, one may obtain Theorem \ref{theorem:RepSerreDuality} as a consequence of \cite[Theorem 3.5]{Chen06}.
\end{remark}

\begin{corollary}\label{corollary:ShDualizing} 
Let $\aa$ be a finite $k$-variety.  The category $\mod \aa$ is abelian, hereditary, and has Serre duality if and only if $\aa$ is a semi-hereditary dualizing $k$-variety.
\end{corollary}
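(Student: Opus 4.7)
The plan is to deduce the corollary directly from Theorem \ref{theorem:RepSerreDuality}, observing that in the hereditary setting the finiteness hypotheses on projective and injective dimension come essentially for free. Recall that by definition $\aa$ is semi-hereditary precisely when $\mod \aa$ is abelian and hereditary, so the only nontrivial content is the matching of ``has Serre duality'' with ``is dualizing'' under a hereditary assumption.

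For the forward direction, I would start from the assumption that $\mod \aa$ is abelian, hereditary, and has Serre duality. The hereditary hypothesis is already the definition of semi-hereditary, so nothing is to be shown on that side. Applying Theorem \ref{theorem:RepSerreDuality} directly gives that $\aa$ is a dualizing $k$-variety (and as a bonus tells us every object of $\mod \aa$ has finite projective and injective dimension, which is of course consistent with hereditariness).

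For the reverse direction, I would suppose $\aa$ is a semi-hereditary dualizing $k$-variety and try to apply Theorem \ref{theorem:RepSerreDuality} in the opposite direction. Abelianness and hereditariness of $\mod \aa$ are given, so the only thing to check is that every object of $\mod \aa$ has finite projective and finite injective dimension. Projective dimension is at most $1$ by hereditariness. For injective dimension, the key trick is to exploit the duality $D:\mod \aa \to \mod \aa^\circ$ that comes with $\aa$ being dualizing: this duality transports hereditariness from $\mod \aa$ to $\mod \aa^\circ$, so every object of $\mod \aa^\circ$ has projective dimension at most $1$, and then applying $D$ back yields that every object of $\mod \aa$ has injective dimension at most $1$. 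Theorem \ref{theorem:RepSerreDuality} then provides Serre duality on $\mod \aa$.

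There is no real obstacle beyond this bookkeeping; the substantive work (finite-dimensional Nakayama equivalence, pseudo(co)kernels, the identification of $\Kb\PP$ and $\Kb\II$ with $\Db \mod \aa$) has already been carried out in the proof of Theorem \ref{theorem:RepSerreDuality}. The only place where one must be careful is to note explicitly that the finite injective dimension in the reverse direction is obtained via the duality $D$ rather than attempted directly, since hereditariness of $\mod \aa$ alone does not a priori give finite injective \emph{resolutions} by standard injectives without using the structure supplied by $\aa$ being dualizing.
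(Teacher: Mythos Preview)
Your argument is correct and is precisely how one would spell out the corollary from Theorem~\ref{theorem:RepSerreDuality}; the paper itself leaves the corollary unproven, treating it as immediate. Your use of the duality $D$ to obtain finite injective dimension is fine (a contravariant equivalence preserves vanishing of higher Ext, so $\mod\aa^\circ$ is again hereditary); an equally short alternative is to note that, by the remark following Proposition~\ref{proposition:Dualizing}, every object of $\mod\aa$ already has an injective resolution by standard injectives, and hereditariness then forces its length to be at most~$1$.
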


\begin{proof}
That $\mod \aa$ is abelian and hereditary if and only if $\aa$ is semi-hereditary is the definition of semi-heredity.

Assume that $\aa$ semi-hereditary dualizing $k$-variety.  In this case, $\mod \aa$ is hereditary and hence the projective and injective dimensions of every object are bounded above by one.  We may then apply Theorem \ref{theorem:RepSerreDuality} to see that $\mod \aa$ has Serre duality.

For the other direction, assume that $\mod \aa$ is abelian, hereditary, and has Serre duality.  In this case, Theorem \ref{theorem:RepSerreDuality} yields that $\aa$ is a dualizing $k$-variety.
\end{proof}

\begin{remark}
The following example shows that there are dualizing $k$-varieties $\aa$ such that $\mod \aa$ has infinite global dimension but $\mod \aa$ still has Serre duality.
\end{remark}

\begin{example}\label{example:InfiniteDimension}
Let $Q$ be an $A_\infty$ quiver with zig-zag orientation where every zig has one more arrow than the preceding zag, labeled as in Figure \ref{fig:ZigZag}.  We define a relation on $Q$ by requiring the composition of any two arrows to be zero.  The associated additive category $\bb$ is a dualizing $k$-variety by Proposition \ref{proposition:Dualizing}.  Since every object of $\mod \bb$ has finite projective dimension, Theorem \ref{theorem:RepSerreDuality} yields that $\mod \bb$ has Serre duality.  However, denoting by $S(v)$ the simple representation associated with the vertex $v \in Q$, we see that the projective dimension of $S(a^i_i)$ is $i$ so that the global dimension of $\mod \bb$ is infinite.
\end{example}

\begin{figure}[tbh]
$$\xymatrix@=10pt{{a^1_0} \ar[rd] && {b^1_0 = a^2_0} \ar[ld] \ar[rd]&&&& {b^2_0 = a^3_0} \ar[dl] \ar[dr] \\
& {a^1_1 = b^1_1} && {a^2_1} \ar[rd] && {b^2_1}\ar[ld]&& {a^3_1} \ar[dr]\\
&&&& {a^2_2 = b^2_2} &&&& {a^3_2} \ar[dr] && \ar@{..}[dl]\\
&&&&&&&&& {a^3_3 = b^3_3}}$$
\caption{}
\label{fig:ZigZag}
\end{figure}

\begin{remark}\label{remark:PiecewiseHereditary}
The category $\mod \bb$ is piecewise hereditary.  Indeed, let $\aa = kQ$ (thus disregarding the conditions on the composition in Example \ref{example:InfiniteDimension}) then one may define a fully faithful functor by
\begin{eqnarray*}
i: \aa^\circ &\to& \Db \mod \bb \\
a^i_j &\mapsto& S(a^i_j)[-j] \\
b^i_j &\mapsto& S(b^i_j)[-j]
\end{eqnarray*}
The essential image of $i$ forms a partial tilting set (\cite{vanRoosmalen06}) in $\Db \mod \bb$ and hence lifts to a fully faithful and exact functor $i:\Db \mod \aa^\circ \to \Db \mod \bb$.  Since $\aa^\circ$ generates $\Db \mod \bb$ as a triangulated category, $i$ will be an equivalence of triangulated categories.
\end{remark}

\begin{example}\label{example:NoSerreDuality}

Let $Q$ be an $A_\infty^\infty$ quiver with linear orientation where the vertices are labeled by integers, as in
$$\xymatrix{\ar@{..}[rd] && -2 \ar[rd] && 0 \ar[rd] && 2 \ar[rd]&& \\
& -3 \ar[ru] && -1 \ar[ru] && 1 \ar[ru] && 3\ar@{..}[ru]}$$
Let $\bb$ be the associated finite $k$-variety one obtains by requiring that the composition of any two arrows is zero.  Then $\bb$ is a dualizing $k$-variety (it is equivalent to the category $\Db \mod kA_2$ from Example \ref{example:A2}).

Since all indecomposables of $\mod \bb$ are either projective--injective or isomorphic to a standard simple, it is easy to check that $\mod \bb$ has left and right almost split sequences, but Theorem \ref{theorem:RepSerreDuality} shows that $\Db \mod \bb$ has no Serre functor since every standard simple module of $\bb$ has infinite projective dimension.
\end{example}

\begin{remark}
As in Remark \ref{remark:PiecewiseHereditary}, one shows that the category $\mod \bb$ from the previous example is piecewise hereditary.  Denote by $\aa$ the k-linear path category $kQ$ (without relations).  There is an equivalence $F: \Db \mod \aa^\circ \to \Db \mod \bb$ induced by
\begin{eqnarray*}
i: \aa^\circ &\to& \Db \mod \bb \\
n &\mapsto& S(n)[-n]
\end{eqnarray*}
where $S(n)$ is the standard simple associated with the indecomposable $n \in \Ob \bb$.  Note that $\mod \aa^\circ$ has nonzero projectives but no nonzero injectives, hence it does not satisfy Serre duality (see \cite[Theorem A]{ReVdB02}).
\end{remark}

\begin{remark}
Since the category of projectives $\PP$ and the category of injectives $\II$ in $\mod \bb$ coincide, we may lift the Nakayama functor $N: \PP \to \II$ to a Serre functor on the category of perfect complexes (see for example \cite{Chen06,MazorchukStroppel08}).

The category of perfect complexes corresponds under the functor $F: \Db \mod \aa^\circ \to \Db \mod \bb$ to the subcategory $\Db \modfd \aa^\circ$ of $\Db \mod \aa^\circ$, where $\modfd \aa^\circ$ is the category of finite dimensional modules of $\aa^\circ$.  It has been shown in \cite{Ringel02} that $\modfd \aa^\circ$ does indeed satisfy Serre duality.
\end{remark}

Let $\aa$ be a finite $k$-variety and $A$ an indecomposable object in $\aa$.  A map $f:A \to M$ is called \emph{left almost split} if every nonsplit map $A \to B$ factors through $f$.  Dually, a map $g: N \to A$ is called \emph{right almost split} if every nonsplit map $B \to A$ factors through $g$.

We will say a finite $k$-variety $\aa$ is \emph{locally finite and locally discrete} if every indecomposable object $A$ of $\aa$ admits a left almost split map $A \to M$ and a right almost split map $N \to A$.

\begin{example}
Let $Q$ be a finite quiver.  The path category $kQ$ is locally discrete and locally finite.
\end{example}

\begin{example}
Let $\PP$ be the poset $\bN \cdot \{+ \infty\}$.  We may draw the Auslander--Reiten quiver of $k\PP$ as
$$\xymatrix{
0 \ar[r] & 1 \ar[r] & 2 \ar[r] & 3 \ar@{..}[r]& {+\infty}
}$$
It is clear that $k\PP$ is not locally discrete since there is no right almost split map $N \to (+\infty)$.  Note that $+\infty$ is an accumulation point of $\PP$.
\end{example}

We now give an equivalent formulation of these properties.

\begin{proposition}\label{proposition:LocallyFinite}
A finite $k$-variety is locally finite and locally discrete if and only if all standard simples of $\aa$ are finitely presented and cofinitely presented.  Furthermore, every dualizing $k$-variety is locally finite and locally discrete.
\end{proposition}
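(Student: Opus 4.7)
The plan is to prove the equivalence by translating (co)finite presentations of the standard simple $S_A$ into the existence of almost split maps via the Yoneda lemma, and to deduce the second sentence directly from Proposition~\ref{proposition:Dualizing}.

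First I would show that $S_A$ is finitely presented iff $A$ admits a left almost split map $N \to A$ in $\aa$. Because $A$ is indecomposable and idempotents split in the finite $k$-variety $\aa$, the projective cover of $S_A$ is $\aa(-,A)$ and any finite presentation has the shape $\aa(-,N) \xrightarrow{g_*} \aa(-,A) \to S_A \to 0$ for some $g: N \to A$. By the Yoneda lemma, $\im g_* \subseteq \aa(-,A)$ coincides with $\rad(-,A)$ precisely when $g$ is radical and every radical morphism $B \to A$ factors through $g$; since in the Krull--Schmidt category $\aa$ a morphism into an indecomposable is radical iff it is non-split, this is exactly the condition that $g$ be a (radical) left almost split map. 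The analogous equivalence ``$S_A$ cofinitely presented $\Leftrightarrow$ $A$ admits a right almost split map $A \to M$'' follows by applying this argument to $\aa^\circ$, using that the duality $D: \Modlfd \aa \to \Modlfd \aa^\circ$ sends $S_A$ to $S_A^\circ$, swaps finitely with cofinitely presented, and exchanges left and right almost split maps under $\aa \leftrightarrow \aa^\circ$.

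For the second sentence, suppose $\aa$ is dualizing and let $A$ be indecomposable. By Proposition~\ref{proposition:Dualizing} the standard injective $I_A = \aa(A,-)^*$ is finitely presented and $\aa$ has pseudokernels, so $\mod \aa$ is abelian and closed under images inside $\Modlfd \aa$. The natural factorization $\aa(-,A) \twoheadrightarrow S_A \hookrightarrow I_A$ (coming from the Yoneda image of $1_A$) exhibits $S_A$ as the image of a morphism between two finitely presented modules, hence $S_A \in \mod \aa$. Since $\aa^\circ$ is again a dualizing $k$-variety, the same argument applied there shows $S_A^\circ \in \mod \aa^\circ$, so by duality $S_A$ is cofinitely presented as well; the equivalence just proved then gives that $\aa$ is locally finite and locally discrete.

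I expect the main subtlety to be the Yoneda translation in the first step --- identifying $\im g_*$ with $\rad(-,A)$ uses both that non-split maps into an indecomposable coincide with the radical ones (a Krull--Schmidt fact) and that the generator $g$ may itself be taken to be a radical morphism. Everything else is a routine unfolding of the definitions combined with a symmetric use of the duality $D$.
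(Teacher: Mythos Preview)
Your proof is correct and follows the same approach as the paper: the equivalence is obtained via Yoneda by identifying a presentation $\aa(-,N)\to\aa(-,A)\to S_A\to 0$ with a (radical) left almost split map $N\to A$, and the dualizing case is handled by realizing $S_A$ as the image of the canonical map $\aa(-,A)\to\aa(A,-)^*$ between two objects of the abelian category $\mod\aa$. If anything, your version is slightly more explicit than the paper's, spelling out the radical condition on $g$ and the passage to $\aa^\circ$ for the cofinitely presented half.
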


\begin{proof}
Assume that $\aa$ is locally finite and locally discrete.  For an indecomposable $A \in \ind \aa$, let $N \to A$ be a right almost split map which gives rise to a map in $\mod \aa$
$$\aa(-,N) \longrightarrow \aa(-,A).$$
of which the cokernel is the standard simple $S_A$.  Dually, one shows all standard simples are cofinitely presented.

Next, assume that all standard simples are finitely and cofinitely presented.  We prove that every indecomposable $A \in \ind \aa$ admits a left almost split map $N \to A$, for a certain object $N \in \Ob \aa$.  Consider a presentation of $S_A$
$$Q \longrightarrow \aa(-,A) \longrightarrow S_A \longrightarrow 0.$$
We may write the projective $Q$ as $\aa(-,N)$ and, since $S_A \cong \aa(-,A)/\rad \aa(-,A)$, the induced map $N \to A$ is right almost split.  Dually, one proves a $A$ admits a left almost split map $A \to M$.

For the last part, let $\aa$ be a dualizing $k$-variety and let $A \in \Ob\aa$ be an indecomposable object. Denote by $S_A \in \Ob \Modlfd \aa$ the associated standard simple representation.  We know there is an epimorphism $\aa(-,A) \to S_A$ in $\Modlfd \aa$ and, since $DS_A$ is a simple object in $\Modlfd \aa$, there is an epimorphism $\aa(A,-) \to DS_A$ in $\Modlfd \aa^\circ$.  Applying $D$ yields that $S_A$ is the image of a map $f:\aa(-,A) \to \aa(A,-)^*$.  Proposition \ref{proposition:Dualizing} yields that $\Ker f \in \mod \aa$.

There is now a short exact sequence $0 \to \ker f \to \aa(-,A) \to S_A \to 0$ which shows $S_A \in \Ob \mod \aa$.
\end{proof}

\begin{remark}
It has already been shown in \cite[Proposition 3.2]{AuslanderReiten74} that all standard simple representations of a dualizing $k$-variety are finitely presented.
\end{remark}

\section{(Co)reflective subvarieties}

Recall that a full replete (= closed under isomorphisms) subcategory of a category is called \emph{reflective} or \emph{coreflective} if the embedding has a left or a right adjoint, respectively.

In this section we will consider such reflective and coreflective subcategories of finite $k$-varieties.  These subcategories occur often in the semi-hereditary case (Proposition \ref{proposition:WhenAdjoints}) and the following proposition show they preserve the properties we are interested in.

\begin{proposition}\label{proposition:DualizingIfLeftAndRightAdjoint}
Let $\aa$ be a dualizing $k$-variety, $\bb$ a finite $k$-variety, and $i:\bb \to \aa$ a full embedding admitting a left adjoint $i_L$ and a right adjoint $i_R$.  Then $\bb$ is also a dualizing $k$-variety.  Furthermore, if (the abelian category) $\mod \aa$ satisfies Serre duality, then so does $\mod \bb$.
\end{proposition}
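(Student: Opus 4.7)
The strategy is to invoke Proposition \ref{proposition:Dualizing} for the dualizing statement and Theorem \ref{theorem:RepSerreDuality} for the Serre duality statement. Both arguments rest on two adjunction identities that follow from $i$ being fully faithful: for $A \in \aa$ and $B \in \bb$,
$$\aa(-, A)|_\bb \cong \bb(-, i_R A), \qquad \aa(A, -)^*|_\bb \cong \bb(i_L A, -)^*,$$
together with the facts that the restriction functor $(-)|_\bb: \Mod \aa \to \Mod \bb$ is exact while the extension $\aa \otimes_\bb -: \Mod \bb \to \Mod \aa$ is right exact and sends $\bb(-,B)$ to $\aa(-,iB)$.

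To show $\bb$ is dualizing, I first produce pseudo(co)kernels. Given a morphism $f:B \to B'$ in $\bb$, take a pseudokernel $g: K \to iB$ of $if$ in $\aa$; then $\bb(X, i_R K) \cong \aa(iX, K)$ shows $i_R g: i_R K \to i_R iB \cong B$ is a pseudokernel of $f$ in $\bb$, with pseudocokernels handled dually by $i_L$. For the (co)finite presentation conditions, since $\aa$ is dualizing, $\aa(-, iB)$ admits a cofinite presentation $0 \to \aa(-, iB) \to I^0 \to I^1$ with each $I^j$ a finite sum of standard injectives of $\mod \aa$; restricting along $i$ (exact) and using the adjunction identities produces a cofinite presentation $0 \to \bb(-, B) \to I^0|_\bb \to I^1|_\bb$ of $\bb(-, B)$ by standard injectives $\bb(i_L A, -)^*$ in $\mod \bb$. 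A dual argument shows each standard injective $\bb(B, -)^*$ is finitely presented, and Proposition \ref{proposition:Dualizing} yields that $\bb$ is dualizing.

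For the Serre duality statement, I apply Theorem \ref{theorem:RepSerreDuality}: it suffices to show every object of $\mod \bb$ has finite projective and finite injective dimension. Given $N \in \mod \bb$, the extension $\aa \otimes_\bb N$ lies in $\mod \aa$ and, by hypothesis, admits a finite projective resolution $P_\bullet \to \aa \otimes_\bb N$ in $\mod \aa$. Since restriction is exact, carries standard projectives of $\aa$ to standard projectives of $\bb$ (via the first identity), and satisfies $(\aa \otimes_\bb N)|_\bb \cong N$ because $i$ is fully faithful, the restricted complex gives a finite projective resolution of $N$ in $\mod \bb$. For finite injective dimension, apply the analogous argument to $i^\circ: \bb^\circ \to \aa^\circ$, which is again a full embedding with both adjoints (with the roles of $i_L$ and $i_R$ swapped); the duality $D$ between $\mod \bb$ and $\mod \bb^\circ$ converts the resulting finite projective dimension in $\mod \bb^\circ$ into finite injective dimension in $\mod \bb$.

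The main technical point, and the only nonroutine step, is pinning down the adjunction identities describing how restriction and extension interact with the standard projective and standard injective representations; once these are in place, the proof reduces to transferring finite resolutions between $\mod \aa$ and $\mod \bb$ through exact or right-exact functors.
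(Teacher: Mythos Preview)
Your proof is correct and follows essentially the same route as the paper: verify Proposition~\ref{proposition:Dualizing} by pulling pseudo(co)kernels back through $i_R$ and $i_L$, restrict (co)presentations along $i$ using the adjunction identities $\aa(-,A)|_\bb\cong\bb(-,i_RA)$ and $\aa(A,-)^*|_\bb\cong\bb(i_LA,-)^*$, and for Serre duality apply Theorem~\ref{theorem:RepSerreDuality} by extending $N$ to $\aa\otimes_\bb N$, resolving there, and restricting back via $(\aa\otimes_\bb N)|_\bb\cong N$. The only cosmetic difference is that for finite injective dimension the paper restricts a finite injective resolution of $\aa\otimes_\bb N$ directly (the second adjunction identity shows restriction also carries standard injectives to standard injectives), whereas you pass through $\bb^\circ$ and the duality $D$; both work.
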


\begin{proof}
Let $B_1 \to B_2$ be a map in $\bb$.  Let $K,C \in \Ob \aa$ be a pseudokernel and pseudocokernel of the map $iB_1 \to iB_2$ in $\aa$, respectively.  Then $i_R K$ is a pseudokernel and $i_L C$ is a pseudocokernel of $B_1 \to B_2$.

Let $B \in \Ob \bb$ and let $\aa(-,A_1) \to \aa(-,A_0) \to \aa(iB,-)^* \to 0$ be a projective resolution in $\mod \aa$, then $\bb(-,i_R A_1) \to \bb(-,i_R A_0) \to \bb(B,-)^* \to 0$ is a projective resolution in $\mod \bb$.  This shows that every standard injective in $\mod \bb$ is finitely presented.  Likewise, one shows that every (standard) projective in $\bb$ is cofinitely presented.  Proposition \ref{proposition:Dualizing} yields that $\bb$ is a dualizing $k$-variety.

Assume now furthermore that $\mod \aa$ satisfies Serre duality.  By Theorem \ref{theorem:RepSerreDuality}, it suffices to show that every object of $\mod \bb$ has finite projective and finite injective dimension.  Thus consider an $M \in \mod \bb$ and assume a projective resolution of $M \otimes_\bb \aa$ is given by
$$0 \to \aa(-,A_n) \to \cdots \to \aa(-,A_1) \to \aa(-,A_0) \to M \otimes_\bb \aa \to 0.$$
Restricting this to $\bb$ and using that $(- \otimes_\bb \aa)_\bb$ is equivalent to the identity functor on $\mod \bb$, we find
$$0 \to \bb(-,i_R A_n) \to \cdots \to \bb(-,i_R A_1) \to \bb(-,i_R A_0) \to M \to 0$$
such that $M$ has finite projective dimension.  Analogously, one shows that every object in $\mod \bb$ has a finite resolution by standard injectives.
\end{proof}

The following proposition gives some examples of reflective and/or coreflective subvarieties. 

\begin{proposition}\label{proposition:WhenAdjoints}
Let $\aa$ be a semi-hereditary finite $k$-variety.  
\begin{enumerate}
\item Let $\MM$ be a set of objects of $\mod \aa$ such that $\sum_{M \in \MM} \dim M(A) < \infty$ for all $A \in \Ob \aa$ and $\Ext^1(M_1,M_2) = 0$ for all $M_1,M_2 \in \MM$.  Define a full subcategory $\aa_{^\perp \MM}$ of $\aa$ given by 
$$B \in \aa_{{}^\perp \MM} \Longleftrightarrow \forall M \in \MM: M(B) = 0.$$
The embedding $i: \aa_{^\perp \MM} \to \aa$ has a left and right adjoint.
\item Let $Y \in \ind \aa$, then the embedding $\supp \aa (-,Y) \to \aa$ has a left adjoint.
\item Let $X,Y \in \ind \aa$.  If $\aa$ is a dualizing $k$-variety, then the embedding $[X,Y] \to \aa$ has a left and a right adjoint.
\item Let $X,Y \in \ind \aa$ and let $\bb$ be the full subcategory of $\aa$ such that no object in $\bb$ has a direct summand in $[X,Y]$.  Then the embedding $\bb \to \aa$ has a left and a right adjoint.
\end{enumerate}
\end{proposition}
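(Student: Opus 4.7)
The plan is to construct each adjoint inside $\mod \aa$, exploiting that this category is abelian and hereditary (Proposition \ref{proposition:ShIsLocal}), so any finitely generated subobject of a finitely generated projective is a standard projective of the form $\aa(-, C)$.

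For (1), fix $A \in \aa$. Only finitely many $Z_1, \ldots, Z_n \in \ZZ$ satisfy $Z_i(A) \neq 0$ by hypothesis, so the evaluation
\[
\phi_A : \aa(-, A) \longrightarrow T_A := \bigoplus_i Z_i \otimes_k Z_i(A)^*
\]
lies in $\mod \aa$, and its kernel is a standard projective $\aa(-, R(A))$. Evaluating the short exact sequence $0 \to \aa(-, R(A)) \to \aa(-, A) \to \im \phi_A \to 0$ at $B \in \aa_{^\perp \ZZ}$ gives $(\im \phi_A)(B) = 0$, so $R(A) \to A$ serves as counit of a right adjoint, provided $R(A) \in \aa_{^\perp \ZZ}$; the latter follows from the Ext long exact sequence, using $\Ext^1(T_A, Z) = 0$ (a consequence of $\Ext^1(Z_i, Z) = 0$) together with the Yoneda observation that every map $\aa(-, A) \to Z$ automatically factors through $\im \phi_A$. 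The left adjoint is obtained dually via the universal extension of $\aa(-, A)$ by $\Ext^1$-classes against $\ZZ$, producing a standard projective $\aa(-, L(A))$; the finiteness and $\Ext^1$-vanishing hypotheses on $\ZZ$ ensure the construction stays in $\mod \aa$.

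For (2), given $A \in \aa$, set $n = \dim \aa(A, Y)$ and consider the canonical $e_A : \aa(-, A) \to \aa(-, Y)^n$ in $\mod \aa$ induced by a basis of $\aa(A, Y)$. Its image is a subobject of the projective $\aa(-, Y)^n$, hence a standard projective $\aa(-, L(A))$; the surjection $\aa(-, A) \twoheadrightarrow \aa(-, L(A))$ produces $\eta_A : A \to L(A)$, and $L(A) \in \supp(-, Y)$ because $\aa(-, L(A)) \hookrightarrow \aa(-, Y)^n$ yields a nonzero map $L(A) \to Y$. The universal property reduces to showing that every $\psi : A \to B$ with $B \in \supp(-, Y)$ annihilates $\ker e_A$: if $f \in (\ker e_A)(C)$ then $gf = 0$ for every $g \in \aa(A, Y)$, so $(h\psi)f = 0$ for every $h \in \aa(B, Y)$, after which the semi-hereditary structure of $\aa$ is used to force $\psi f = 0$.

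For (3), $\aa$ dualizing implies $\aa^\circ$ is also a semi-hereditary dualizing $k$-variety, so (2) applied to $\aa^\circ$ yields a right adjoint for $\supp(X, -) \to \aa$, while (2) directly gives a left adjoint for $\supp(-, Y) \to \aa$. Since $[X, Y] = \supp(X, -) \cap \supp(-, Y)$ on indecomposables, I would combine (1) and (2): factor $[X, Y] \to \aa$ through $\supp(-, Y)$ via the left adjoint from (2), and apply (1) to the inner embedding $[X, Y] \hookrightarrow \supp(-, Y)$ with $\ZZ'$ the set of standard simples $S_C$ in $\supp(-, Y)$ attached to indecomposable $C$ with $\aa(X, C) = 0$ (these satisfy the hypotheses of (1) in the dualizing setting). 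The composition will yield both adjoints for $[X, Y] \to \aa$. The main difficulty throughout is the universal property verification in (2), where semi-hereditariness must be invoked carefully to transfer the vanishing of compositions with maps to $Y$ into the vanishing of $\psi f$ itself.
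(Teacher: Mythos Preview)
Your arguments for parts (1) and (2) follow the paper's proof closely: the right adjoint in (1) is exactly the kernel of the canonical evaluation map, and the left adjoint in (2) is exactly the image of $\aa(-,A) \to \aa(-,Y)\otimes \aa(A,Y)^*$. You are also right that the delicate point in (2) is showing $\psi f = 0$; the missing one-line argument is that since $\mod\aa$ is hereditary, the short exact sequence $0\to\ker e_B\to\aa(-,B)\to\operatorname{im}e_B\to 0$ consists of projectives and therefore splits, so for $B$ indecomposable with $\aa(B,Y)\neq 0$ one gets $\ker e_B=0$, whence $\psi f\in(\ker e_B)(C)=0$.

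Your part (3), however, has a genuine gap. The overall strategy---factor $[X,Y]\hookrightarrow\aa$ through $\supp\aa(-,Y)$ and then apply (1) to the inner embedding---is precisely what the paper does, but your choice $\ZZ'=\{S_C : C\in\supp(-,Y),\ \aa(X,C)=0\}$ does \emph{not} satisfy the hypothesis $\Ext^1(Z_1,Z_2)=0$. For instance, in $kQ$ with $Q$ given by $X\to Y$ together with a disjoint chain $C_2\to C_1\to Y$, both $C_1,C_2$ lie in $\supp(-,Y)$ with $\aa(X,C_i)=0$, yet $\Ext^1(S_{C_1},S_{C_2})\neq 0$. The paper avoids this by using a single \emph{projective} module: it sets $\aa(-,Z)=\ker\bigl(\aa(-,Y)\to\aa(X,-)^*\otimes\aa(X,Y)\bigr)$ and shows that, for $A\in\supp\aa(-,Y)$, one has $A\in[X,Y]$ if and only if $\aa(A,Z)=0$. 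Taking $\ZZ=\{\aa(-,Z)\}$ then makes the $\Ext^1$-vanishing automatic (projectives have no higher Ext), and (1) applies cleanly. The verification that $\aa(A,Z)=0\Leftrightarrow\aa(X,A)\neq 0$ again uses the ``indecomposable projective plus splitting'' trick from (2).
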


\begin{proof}
\begin{enumerate}
\item
We start by defining the right adjoint $i_R: \aa \to \aa_{{}^\perp \MM}$.  Let $A_1 \to A_2$ be a map in $\aa$ and consider the following commutative diagram with exact rows
$$\xymatrix{
0 \ar[r] & \aa(-,A_1') \ar[r]\ar@{-->}[d] & \aa(-,A_1) \ar[r]\ar[d] & \bigoplus_{M \in \MM} \Hom(\aa(-,A_1), M)^* \otimes_k M \ar[d] \\
0 \ar[r] & \aa(-,A_2') \ar[r] & \aa(-,A_2) \ar[r] & \bigoplus_{M \in \MM} \Hom(\aa(-,A_2), M)^* \otimes_k M
}$$
Note that both $\bigoplus_{M \in \MM} \Hom(\aa(-,A_1), M)^* \otimes_k M$ and $\bigoplus_{M \in \MM} \Hom(\aa(-,A_2), M)^* \otimes_k M$ lie in $\mod \aa$ (since $\sum_{M \in \MM} \dim M(A) < \infty$ for all $A \in \Ob \aa$) and that $M(A'_1) = M(A'_2) = 0$ for all $M \in \MM$ (since $\Ext^1(M_1,M_2) = 0$ for all $M_1,M_2 \in \MM$) so that $A'_1, A'_2 \in \Ob \aa_{{}^\perp \MM}$.

Since the kernel, the Yoneda embedding, and the tensor product are functorial, it is easy to see that the correspondence from $A_1 \to A_2$ to $A_1' \to A_2'$ is functorial.  It is readily checked that this functor is right adjoint to the embedding.

The left adjoint is defined in a dual way.

\item For every object $A \in \Ob \aa$, consider the canonical map $f_A: \aa(-,A) \to \Hom(A,Y)^* \otimes \aa(-,Y)$.  Since $\mod \aa$ is hereditary, the image $\im f_A$ is a representable functor.  Choose an object $A' \in \Ob \aa$ such that $\im f_A \cong \aa(-,A')$.  Note that $A'$ is a maximal direct summand of $A$ lying in $\supp \aa(-,Y)$.

Let $A \to B$ be a map in $\aa$.  The following commutative diagram
$$\xymatrix{
\aa(-,A) \ar[rr]^-f \ar[rd] \ar[ddd]&& \Hom(A,Y)^* \otimes \aa(-,Y) \ar[ddd] \\
&\aa(-,A') \ar[ru] \ar@{-->}[d] \\
&\aa(-,B') \ar[rd] \\
\aa(-,B) \ar[rr]^-g \ar[ru] && \Hom(B,Y)^* \otimes \aa(-,Y)
}$$
gives a map $\aa(-,A') \to \aa(-,B')$.  The objects $A'$ and $B'$ lie in $\supp \aa(-,Y)$ and the correspondence $A \to B$ to $A' \to B'$ is functorial.  It is readily checked that this defines a left adjoint $i_L: \aa \to \supp \aa(-,Y)$ to the functor $i$.

\item We will only prove that the embedding has a left adjoint; the right adjoint is dual.  Let $\aa(-,Z)$ be a kernel of the canonical map $\aa(-,Y) \to \aa(X,Y) \otimes \aa(X,-)^*$.  We show that for any $A \in \supp \aa(-,Y)$ we have $A \in \supp \aa(X,-)^*$ if and only if $\aa(A,Z) = 0$.  Note that by applying the exact functor $\Hom(-,\aa(X,-)^*)$ to the exact sequence
$$0 \to \aa(-,Z) \to \aa(-,Y) \to  \aa(X,Y) \otimes \aa(X,-)^*$$
we see that $\Hom(\aa(-,Z),\aa(-,X)^*) \cong \aa(X,Z)^* = 0$.  Since $\aa$ is semi-hereditary, $\aa(A,Z) \not= 0$ implies $A \not\in \supp \aa(X,-)^*$.

For the other direction: if $\aa(A,Z) = 0$, then evaluating the exact sequence $0 \to \aa(-,Z) \to \aa(-,Y) \to \aa(X,Y) \otimes \aa(X,-)^*$ in $A$ shows that $A \in \supp \aa(X,-)^*$.

By applying the first part of this proposition with $\MM = \{ \aa(-,Z) \}$, we know that $[X,Y] \to \supp \aa(-,Y)$ has a left adjoint.  The second part of the proposition implies $\supp \aa(-,Y) \to \aa$ has a left adjoint.

\item Consider the canonical map $f: \aa(X,Y)^* \otimes \aa(-,Y) \to \aa(X,-)^*$.  Explicitly, there is a map
\begin{eqnarray*}
f_Z: \aa(X,Y)^* \otimes \aa(Z,Y) &\to& \aa(X,Z)^* \\
\varphi \otimes g &\mapsto& \varphi(g \circ -)
\end{eqnarray*}
for each $Z \in \aa$.  Let $M \cong \im f$; since $M$ is the image of a map in $\mod \aa$, we know that $M$ also lies in $\mod \aa$.  Moreover, it follows from the above description that $M(Z) = 0$ if and only if there are no maps in $\aa(X,Y)$ which factor through $Z$, thus $M(Z) = 0$ if and only if $Z \in \bb$.

Next, we show that $\Ext(M,M) = 0$.  Let $0 \to M \to \aa(X,-)^* \to J \to 0$ be an injective resolution of $M$.  However, $\Ext(M,M)$ is a quotient group of $\Hom(M,J)$ which is a subgroup of $\Hom(\aa(X,Y)^* \otimes \aa(-,Y),J)$.  Using the universal property of $f$, we can show that every map $\aa(-,Y) \to \aa(X,-)^*$ factors through $f$ and hence through $Z$.  This shows that $\Hom(\aa(-,Y),J) = 0$ and thus also that $\Ext(M,M) = 0$.

We can now apply the first part of the proposition to obtain the required property.
\end{enumerate}
\end{proof}

The following example shows that the conditions in the first statement of Proposition \ref{proposition:WhenAdjoints} are necessary.

\begin{example}\label{example:WhenAdjoints}
Let $\LL = \bN \cdot - \bN$.  By choosing $\MM = \{M_i\}_{i \in \bN}$ where either
\begin{enumerate}
\item $M_i = k\LL(-,i)$, or
\item $M_i = S_i$, where $S_i$ is the simple representation associated with $i \in \bN \subset \Ob k\LL$
\end{enumerate}
we find an associated subcategory $\bb = k\LL_{{}^\perp \MM}$ such that the natural embedding $\bb \to k\LL$ does not have a left adjoint.  Here the subcategory $\bb$ is generated by $-\bN$.
\end{example}

Propositions \ref{proposition:DualizingIfLeftAndRightAdjoint} and \ref{proposition:WhenAdjoints} will be used in the next section to deduce properties of semi-hereditary dualizing $k$-varieties.  We will give two examples to illustrate their use; a rigorous treatment follows in Proposition \ref{proposition:ThreadsInVarieties}.

\begin{example}\label{example:NotLocallyDiscrete}
Let $\PP$ be the poset given by the following diagram
$$\xymatrix@R=3pt{
&&&&\ar[r]&\ar[r]&\ar[r]& \\
\ar[r]&X\ar[r]&{X_1}\ar[r]&\ar@{.}[ru]\ar@{.}[rd] \\
&&&&\ar[r]&{Y_1}\ar[r]&Y\ar[r]&
}$$
and let $\aa$ be the semi-hereditary finite $k$-variety $k\PP$.  By removing $]X,Y[ = [X_1,Y_1]$ as in Proposition \ref{proposition:WhenAdjoints}, we get a semi-hereditary finite $k$-variety $\bb$ which is of the form $k\PP'$ where $\PP'$ is given by 
$$\xymatrix@R=3pt{
&&&&\ar[r]&\ar[r]&\ar[r]& \\
\ar[r]&\ar[r]&X\ar@{.}[rru]\ar[rrrd] \\
&&&&&Y\ar[r]&\ar[r]&
}$$
It is clear that $\bb$ is not locally finite and locally discrete as there is no left almost split map $X \to M$.  We conclude that, although $\aa$ is a finite $k$-variety, it is not a dualizing $k$-variety.
\end{example}

\begin{example}\label{example:NotLocallyFinite}
Let $\PP$ be the semi-hereditary finite $k$-variety given by
$$\xymatrix@R=5pt{X\ar[r]\ar[dd]&{X_1}\ar[r]\ar[dd]&\ar[r]\ar[dd]&\ar@{.}[r]&\ar[r]&\ar[r]&{Y_1}\ar[r]&Y \\ &&&\cdots \\
&&&}$$
and let $\aa$ be $k\PP$.  Again we remove $]X,Y[ = [X_1,Y_1]$ as in the Proposition \ref{proposition:WhenAdjoints} and obtain a semi-hereditary finite $k$-variety $\bb$ of the form $k\PP'$ where $\PP'$ is given by
$$\xymatrix@R=5pt{X\ar[dd]\ar[rrrrr]\ar[ddr]\ar[ddrr]\ar[ddrrr]&&&&&Y \\ &&&&\cdots \\
&&&&}$$
Here $\bb$ is not locally finite and locally discrete.  Indeed there is no left almost split map $X \to M$.  Again we conclude $\aa$ is not a dualizing $k$-variety.
\end{example}

\section{Threads}

Let $\aa$ be a semi-hereditary dualizing $k$-variety.  Recall from Proposition \ref{proposition:LocallyFinite} that every indecomposable object $A \in \Ob \aa$ admits a left almost split map $A \to M$ and a right almost split map $N \to A$.  Examples \ref{example:NotLocallyDiscrete} and \ref{example:NotLocallyFinite} suggest that most indecomposables lying in a convex subcategory should be---in some sense---nicely behaved.  To make this rigorous, we introduce threads and thread objects (see also \cite[Definition 6.7]{BergVanRoosmalen10}).  The discussion in this section is similar to the discussion in \cite[Section 6]{BergVanRoosmalen10}.

\begin{definition}
If $\aa$ is a semi-hereditary dualizing $k$-variety, an indecomposable object $X \in \ind \aa$ will be called a \emph{thread object} if $X$ has a unique direct predecessor and a unique direct successor, or equivalently, there is a left almost split map $X \to M$ and a right almost split map $N \to X$ where $M$ and $N$ are indecomposable.  In this case, we will denote the representatives of $M$ and $N$ in $\ind \aa$ by $X^+$ and $X^-$, respectively.

For $X,Y \in \ind \aa$, the subcategory $[X,Y]$ will be called a \emph{thread} if every indecomposable object in $[X,Y]$ is a thread object in $\aa$.  A thread is called \emph{maximal} if it is not a proper subset of another thread.  It is called \emph{infinite} if it contains infinitely many nonisomorphic indecomposables.

An indecomposable object which is not a thread object is called a \emph{nonthread object}.
\end{definition}

\begin{proposition}\label{proposition:ThreadsInVarieties}
Let $\aa$ be a semi-hereditary dualizing $k$-variety, and $X,Y,Z \in \ind \aa$, then
\begin{enumerate}
\item $\supp \aa(X,-)^*$ and $\supp \aa(-,X)$ have only finitely many nonthread objects,
\item if $[X,Y]$ is a thread, then $\dim \aa(X,Y) = 1$,
\item if $[X,Y]$ and $[X,Z]$ are threads, then either $[X,Y] \subseteq [X,Z]$ or $[X,Z] \subseteq [X,Y]$.
\end{enumerate}
\end{proposition}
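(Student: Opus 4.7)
The plan is to prove the three parts in order: (1) is the main technical step, while (2) and (3) follow from a direct analysis of the chain structure inherent in threads.

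For part (1), the key input is that, because $\aa$ is dualizing (Proposition \ref{proposition:Dualizing}), the standard projective $\aa(-,X)$ is cofinitely presented. I would fix an injective coresolution
$$0 \to \aa(-,X) \to \bigoplus_{j=1}^m \aa(Y_j,-)^* \to \bigoplus_{k=1}^n \aa(Z_k,-)^* \to 0$$
in $\mod \aa$. The underlying idea is that a nonthread object $A$ in $\supp \aa(-,X)$ creates a genuine branching: a nonzero map $A \to X$ factors through the (reducible) right almost split map $A \to A^+$, forcing at least two indecomposable summands of $A^+$ to also lie in $\supp \aa(-,X)$. Tracking these branchings through the finite coresolution should show that only finitely many such $A$ can exist, because otherwise one produces an infinite configuration in the supports of the finitely many $\aa(Y_j,-)^*$ that cannot be absorbed into the single exactness step landing in $\oplus_k\aa(Z_k,-)^*$. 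The symmetric statement for $\supp \aa(X,-)^*$ follows by dualizing, using the finite projective presentation of the standard injective.

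For part (2), once $[X,Y]$ is a thread, every indecomposable it contains is a thread object. Starting at $X$, iteratively define $X^0 = X$ and $X^{i+1} = (X^i)^+$; since any nonzero map $X^i \to Y$ factors through $X^i \to (X^i)^+$, this chain stays inside $[X,Y]$ until it reaches $Y$. Local discreteness (Proposition \ref{proposition:LocallyFinite}) rules out accumulation and Hom-finiteness then forces the chain to terminate, yielding $X = X^0 \to X^1 \to \cdots \to X^N = Y$. Each $\aa(X^i, X^{i+1}) \cong k$ since $X^{i+1}$ is indecomposable with $\End(X^{i+1}) = k$ and every nonzero map factors through the unique almost split. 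Any nonzero $f : X \to Y$ factors through this whole chain of almost splits, giving $\dim_k \aa(X,Y) = 1$ by an easy induction on $N$.

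For part (3), the canonical chain $X, X^+, X^{++}, \ldots$ is intrinsic to $X$ and is determined without reference to the target. By the analysis in (2), the threads $[X,Y]$ and $[X,Z]$ are precisely finite initial segments of this chain, say $[X,Y] = \{X, X^+, \ldots, X^{+n}\}$ and $[X,Z] = \{X, X^+, \ldots, X^{+m}\}$. Whichever of $n$ and $m$ is smaller gives one thread contained in the other. The main obstacle is part (1): converting the finite data of the cofinite presentation into a uniform bound on branchings is the step that requires genuine work, whereas parts (2) and (3) are bookkeeping once the chain structure is understood.
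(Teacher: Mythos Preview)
Your approach to parts (2) and (3) contains a genuine gap: you assume that every thread $[X,Y]$ is finite, i.e.\ that the chain $X, X^+, X^{++}, \ldots$ reaches $Y$ after finitely many steps. This is false in general, and accommodating \emph{infinite} threads is precisely the point of the paper. Take $\aa = k\LL$ with $\LL = \bN \cdot -\bN$: then $[0,-0]$ is a thread (every indecomposable has a unique direct predecessor and successor), yet the chain $0 \to 1 \to 2 \to \cdots$ never reaches the maximum $-0$. Neither local discreteness nor Hom-finiteness forces termination: one has $\dim_k \aa(0,-0) = 1$, but there is no finite chain of irreducible maps from $0$ to $-0$. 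Your inductive computation of $\dim\aa(X,Y)$ in (2) and your description of $[X,Y]$ and $[X,Z]$ as finite initial segments of a single chain in (3) therefore both collapse.

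The paper's arguments avoid any finiteness assumption on threads. For (2), one analyses the image $F$ of the canonical map $\aa(-,Y)\otimes_k \aa(X,Y)^* \to \aa(X,-)^*$; the two short exact sequences in which $F$ sits, together with semi-heredity, show that $\dim\aa(X,Y) > 1$ would force $\dim\aa(X,-)$ to strictly increase along an irreducible map inside the thread, which is impossible. For (3), one uses that the embedding $[X,Y] \hookrightarrow \aa$ has a right adjoint $i_R$ (Proposition~\ref{proposition:WhenAdjoints}) and compares $Z$ to an indecomposable summand of $i_R(Z)$. Your plan for (1), by contrast, is in the same spirit as the paper's: cofinite presentation of $\aa(-,X)$ combined with semi-heredity controls the branching, though the paper records this step quite tersely.
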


\begin{proof}
\begin{enumerate}
\item It follows from Proposition \ref{proposition:Dualizing} that $\aa(-,X)$ is cofinitely presented; let
$$0 \to \aa(-,X) \to \aa(I,-)^* \to \aa(J,-)^* \to 0$$
be an injective resolution in $\mod \aa$.  Assuming that $\supp \aa(-,X)$ has finitely many nonthread objects, one can use that $\aa$ is semi-hereditary to show that either $I$ or $J$ has infinitely many direct summands.

Alternatively, note that for every nonthread object $Y \in \ind \supp \aa(-,X)$, there is a indecomposable direct summand $I_i$ of $I$ such that $Y \in [I_i,X]$.  One can embed $\aa$ into $\Db \mod \aa$ and apply \cite[Lemma 6.17(1)]{BergVanRoosmalen10} to see that $[I_i,X]$ has only finitely many nonthread objects.

\item Write $V = \aa(X,Y)^*$ and consider the canonical map $\aa(-,Y) \otimes V \to \aa(X,-)^*$ with image $F \in \mod \aa$.  We have the following short exact sequences
\begin{equation}\label{equation:ThreadSequence1}
\xymatrix@1{0 \ar[r] & \aa(-,A) \ar[r] & \aa(-,Y) \otimes V \ar[r] & F \ar[r] & 0}
\end{equation}
\begin{equation}\label{equation:ThreadSequence2}
\xymatrix@1{0 \ar[r] & F \ar[r] & \aa(X,-)^* \ar[r] & \aa(B,-)^* \ar[r] & 0}
\end{equation}
It is clear that every direct summand of $A$ maps nonzero to $Y$.  Furthermore, from the second short exact sequence we obtain $\dim F(X) = 1$ so that $\dim V > 1$ would imply $\dim \aa(X,A) \not= 0$.  Thus at least one indecomposable direct summand $A_1$ of $A$ lies in $[X,Y[$, hence $A_1$ is a thread object with direct successor $A_1^+ \in \ind ]X,Y] \subset \ind \aa$.  It follows from the first short exact sequence that $\dim F(A_1) < \dim F(A_1^+)$.

However, applying $\Hom(\aa(-,Y),-)$ to the sequence (\ref{equation:ThreadSequence2}) yields $\aa(B,Y) = 0$.  In particular, since $\aa$ is semi-hereditary, we have for every $Z \in [X,Y]$ that $\dim \aa(B,Z) = 0$ and thus $\dim F(Z) = \dim \aa(X,Z)$.  We find $\dim \aa(X,A_1) < \dim \aa(X,A_1^+)$, a contradiction since $\aa$ is semi-hereditary.

\item Let $i: [X,Y] \to \aa$ be the natural embedding.  By Proposition \ref{proposition:WhenAdjoints} we know that the embedding has a right adjoint $i_R: \aa \to [X,Y]$.  Let $A$ be an indecomposable direct summand of $i_R(Z)$.

If $A \cong Y$, then $\aa(Y,Z) \not= 0$ so that $[X,Y] \subseteq [X,Z]$.

Therefore, we will assume that $A \not \cong Y$ and show that $iA \cong Z$.  In this case, we have $A^+ \in \ind [X,Y]$.  Since $A \to A^+$ is left and right almost split, we know that $\dim \aa(iA,Z) = \dim \aa(iA^+,Z)$ if $Z \not\cong iA$.  Using adjointness, we find $\dim\Hom(A,i_R Z) = \dim\Hom(A^+,i_R Z)$.  This last statement is impossible since $A$ is a direct summand of $i_R Z$.  We find that $iA \cong Z$ and thus $\aa(Z,Y) \not= 0$.  We conclude that $[X,Z] \subseteq [X,Y]$
\end{enumerate}
\end{proof}

\begin{corollary}\label{corollary:ThreadsPosets}
Let $\aa$ be a semi-hereditary dualizing $k$-variety. Every thread $[X,Y]$ is equivalent to $k \LL$ for a linearly ordered locally discrete poset $\LL$ with a maximal and a minimal element.
\end{corollary}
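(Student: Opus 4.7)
The plan is to set $\LL := \ind[X,Y]$ with the relation $a\leq b \iff \aa(a,b)\neq 0$, show that this makes $\LL$ a linearly ordered, locally discrete poset with minimum $X$ and maximum $Y$, and then deduce the equivalence $k\LL\simeq[X,Y]$ from the one-dimensionality of Hom spaces within the thread. Totality of $\leq$ is immediate from Proposition \ref{proposition:ThreadsInVarieties}(3) applied to $[X,a]$ and $[X,b]$: one containment holds and hence one of $\aa(a,b),\aa(b,a)$ must be nonzero. For $a\leq b$, Proposition \ref{proposition:ThreadsInVarieties}(2) applied to the sub-thread $[a,b]\subseteq[X,Y]$ yields $\dim\aa(a,b)=1$, and the endpoints $X,Y$ are the extrema by the very definition of $[X,Y]$.

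The main obstacle is antisymmetry: given distinct $a,b\in\ind[X,Y]$ with nonzero generators $f\colon a\to b$, $g\colon b\to a$, I must produce a contradiction. The composition $gf\in\End(a)$ cannot be an isomorphism---else $a$ would be a direct summand of the indecomposable $b$, forcing $a\cong b$---hence $gf$ lies in the radical of $\End(a)$ or vanishes. By Proposition \ref{proposition:LocallyFinite} and the left almost split property, every radical morphism into the thread object $a$ factors through the indecomposable left almost split $a^-\to a$; iterating this factorization down the chain of direct predecessors $a^-,a^{--},\dots$ of $a$ produces an infinite nonzero descent. This contradicts the finite length of the standard projective $\aa(-,a)$ restricted to $\mod[X,a]$, which is itself a semi-hereditary dualizing $k$-variety by Propositions \ref{proposition:DualizingIfLeftAndRightAdjoint} and \ref{proposition:WhenAdjoints}(3). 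The degenerate case $gf=0$ is handled in parallel: the image of $\aa(-,f)$ is then a nonzero projective subfunctor of $\ker\aa(-,g)$ in the hereditary category $\mod[X,Y]$, and a symmetric argument using $fg\in\End(b)$ closes out the contradiction.

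With antisymmetry in hand, local discreteness follows easily: for non-maximal $a\in\LL$ the indecomposable direct successor $a^+$ (supplied by Proposition \ref{proposition:LocallyFinite}) lies in $[X,Y]$, since the non-iso $a\to Y$ factors through the right almost split $a\to a^+$ giving $\aa(a^+,Y)\neq 0$, and dually $\aa(X,a^+)\neq 0$; antisymmetry then forces $a^+$ to be the immediate successor of $a$ in $\LL$ (and dually $a^-$ the immediate predecessor). Finally, to build the equivalence $F\colon k\LL\to[X,Y]$, I choose for each pair $a\leq b$ a nonzero $\alpha_{a,b}\in\aa(a,b)=k$ and rescale these one-dimensional choices to enforce $\alpha_{b,c}\circ\alpha_{a,b}=\alpha_{a,c}$; the resulting $k$-linear functor is fully faithful with matching one-dimensional Hom spaces and essentially surjective on indecomposables, and extends by biproducts to the desired equivalence of $k$-linear additive categories.
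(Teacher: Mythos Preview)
Your overall strategy matches the paper's: set $\LL=\ind[X,Y]$ ordered by $a\leq b\iff\aa(a,b)\neq 0$, verify this is a linearly ordered locally discrete poset with extrema $X,Y$, and build the equivalence from one-dimensionality of Hom spaces. The paper compresses the poset verification into a bare citation of Proposition~\ref{proposition:ThreadsInVarieties}; your unpacking of totality, one-dimensionality, and local discreteness is fine.

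Antisymmetry is where your argument goes astray. Since $[a,a]\subseteq[X,Y]$ is itself a thread, Proposition~\ref{proposition:ThreadsInVarieties}(2) already gives $\End(a)\cong k$, so the radical of $\End(a)$ is zero and your ``infinite descent'' case is vacuous. Worse, the claim you invoke there---that $\aa(-,a)$ has finite length in $\mod[X,a]$---is false in general: $[X,a]$ may be infinite (take $a$ in the $-\bN$ part of $k(\bN\cdot-\bN)$), and then the standard projective has infinite support. Only the case $gf=0$ is live, and your handling of it (``a symmetric argument using $fg$ closes out the contradiction'') does not actually exhibit one: knowing $gf=0$ and $fg=0$ is not by itself contradictory. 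The clean argument is this: in the hereditary category $\mod\aa$, any nonzero map out of an indecomposable projective has projective image and hence split (thus zero) kernel, so $\aa(-,f)$ and $\aa(-,g)$ are monomorphisms; their composite $\aa(-,gf)$ is then a monomorphism out of a nonzero object, contradicting $gf=0$.

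The same observation is what makes the functor construction work. Your ``rescale to enforce $\alpha_{b,c}\circ\alpha_{a,b}=\alpha_{a,c}$'' silently assumes that compositions of nonzero maps in $[X,Y]$ are nonzero; without this the composite could vanish and no rescaling helps. The mono argument above supplies exactly that. The paper's construction is also tidier than an unspecified rescaling: it fixes once and for all a nonzero $b_{X,A}\in\aa(X,A)$ for each $A$, and then \emph{defines} $b_{A,B}$ as the unique morphism with $b_{A,B}\circ b_{X,A}=b_{X,B}$, so coherence is automatic rather than something to be arranged after the fact.
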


\begin{proof}
Write $\bb = [X,Y]$ and let $\LL = \ind [X,Y]$.  We define a poset structure on $\LL$ by
$$A \leq B \Longleftrightarrow \aa(A,B) \not= 0.$$
It follows from Proposition \ref{proposition:ThreadsInVarieties} that $\LL,\leq$ is a linearly ordered locally discrete poset with unique minimal and maximal element.  We will define a functor $k \LL \to \bb$ as follows.  For every $A \in \bb$, choose a nonzero element $b_{X,A} \in \aa (X,A)$.

For every $A,B \in \ind \bb$ with $\bb(A,B) \not= 0$, this gives a unique element $b_{A,B} \in \bb(A,B)$ with $b_{X,B} = b_{X,A} \circ b_{A,B}$.

The functor $k\LL \to \bb$, which is the identity on objects and sending the element $(A \leq B) \in k\LL(A,B)$ to $b_{A,B} \in \bb(A,B)$, is fully faithful and essentially surjective, and hence an equivalence.
\end{proof}

\begin{corollary}\label{corollary:MapsOutOfThreads}
Let $\aa$ be a semi-hereditary dualizing $k$-variety, and let $[X,Y]$ be an thread.  If $Z \in \ind \aa$ does not lie on $[X,Y]$, then every map $f:X \to Z$ factors through a nonzero map $X \to Y^+$.
\end{corollary}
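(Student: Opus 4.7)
Interpreting the statement as: every nonzero morphism $f\colon X\to Z$ factors as $X\to Y^+\to Z$ with $X\to Y^+$ nonzero. (The case $f=0$ is trivial, and since $X\in[X,Y]$ and $Z\notin[X,Y]$ are nonisomorphic indecomposables, $f$ is nonsplit.)

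The first attempt is an iterated use of almost split maps. By Proposition~\ref{proposition:LocallyFinite}, $X$ admits a right almost split map $X\to X^+$, so $f = f_1\circ (X\to X^+)$ for some $f_1\colon X^+\to Z$. If $X=Y$ we are done, since then $X^+=Y^+$ and the almost split map is nonzero. Otherwise $X^+\in[X,Y]$ and $X^+\neq Z$, so we iterate. For a \emph{finite} thread the iteration terminates at $Y$ after finitely many steps, and a final application of the right almost split $Y\to Y^+$ yields a factorization $X\to X^+\to\cdots\to Y\to Y^+\to Z$, in which the composite $X\to Y^+$ is nonzero as a composition of nonzero irreducibles.

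For a general, possibly infinite, thread (e.g.\ $[X,Y]\cong k(\bN\cdot(-\bN))$), this iteration fails to terminate, and I would instead use the right adjoint $i_R\colon \aa\to[X,Y^+]$ to the full embedding $i\colon [X,Y^+]\hookrightarrow\aa$, whose existence is guaranteed by Proposition~\ref{proposition:WhenAdjoints}(3). By adjointness, $f$ corresponds uniquely to some $\tilde f\colon X\to i_R Z$ with $f = \epsilon_Z\circ\tilde f$, for $\epsilon_Z$ the counit. Decomposing $i_R Z = \bigoplus_j A_j$ into indecomposables in $\ind[X,Y^+]$, the plan is to show each $A_j\cong Y^+$, so that $\tilde f$ becomes a tuple of maps $X\to Y^+$ and the desired factorization follows.

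The key claim is that no summand $A_j$ belongs to $[X,Y]$. If one did, the summand inclusion $\iota_j\colon A_j\hookrightarrow i_R Z$ would correspond via the counit to a nonzero, nonsplit map $\pi_j = \epsilon_Z\circ\iota_j\colon A_j\to Z$; right almost split at $A_j$ factors $\pi_j$ as $A_j\to A_j^+\to Z$, and lifting the second factor back along the adjunction produces $\tilde\pi\colon A_j^+\to i_R Z$ with $\iota_j = \tilde\pi\circ(A_j\to A_j^+)$ by uniqueness of the adjoint correspondence; projecting onto the $A_j$-summand then exhibits a retraction of the irreducible map $A_j\to A_j^+$ between distinct indecomposables, which is impossible. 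The main obstacle I expect is ruling out ``parallel'' indecomposables in $\ind[X,Y^+]\setminus([X,Y]\cup\{Y^+\})$ as summands; this should follow from the one-dimensionality of $\aa(X,Y^+)$, itself a consequence of Proposition~\ref{proposition:ThreadsInVarieties}(2) combined with the dualizing hypothesis, which forces any factorization $X\to W\to Z$ through such a parallel $W$ to itself pass through $Y^+$.
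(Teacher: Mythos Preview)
Your reading of the statement (the codomain should be $Z$, not $Y$) is correct; this is a typo in the paper.

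Your argument that no summand $A_j$ of $i_R Z$ lies in $[X,Y]$ is fine: the retraction you produce of the irreducible map $A_j\to A_j^+$ is indeed impossible. The acknowledged gap about ``parallel'' indecomposables $W\in\ind[X,Y^+]\setminus(\ind[X,Y]\cup\{Y^+\})$ is real, and your proposed fix via $\dim\aa(X,Y^+)=1$ does not obviously close it: nothing prevents the composite $X\to W\to Y^+$ from vanishing, so one-dimensionality alone gives no control over such $W$. What does close it is essentially the computation inside the proof of Proposition~\ref{proposition:ThreadsInVarieties}(3), which in fact does not use that the second interval is a thread: if $W\notin[X,Y]$ but $\aa(X,W)\neq 0$, then every indecomposable summand of $i_R^{[X,Y]}W$ must be $Y$, hence $\aa(Y,W)\neq 0$; since $Y$ is a thread object this forces $\aa(Y^+,W)\neq 0$, and together with $\aa(W,Y^+)\neq 0$ and $\End(Y^+)=k$ one gets $W\cong Y^+$. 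So your route can be completed, but only by importing that extra argument.

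The paper takes a different and shorter path. Instead of working inside $[X,Y^+]$, it passes to the \emph{complement}: let $\bb\subset\aa$ be the full subcategory on objects with no indecomposable summand in $[X,Y]$. Proposition~\ref{proposition:WhenAdjoints} provides a left adjoint $i_L$ to the inclusion $i:\bb\hookrightarrow\aa$, and one checks directly (using Proposition~\ref{proposition:ThreadsInVarieties}) that $i_L X\cong Y^+$ with $\dim\aa(X,Y^+)=1$. The factorization then drops out of the unit: every $f\in\aa(X,Z)=\aa(X,iZ)$ factors through the unit map $X\to i\,i_L X\cong Y^+$. The advantage of this approach is that it replaces an analysis of $i_R Z$ for arbitrary $Z$ by a single computation of $i_L X$, and the parallel-object issue never arises because $\bb$ is defined without reference to $Y^+$.
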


\begin{proof}
Let $\bb$ be the full subcategory of $\aa$ generated by objects not supported on $[X,Y]$.  By Proposition \ref{proposition:WhenAdjoints}, the embedding $i: \bb \to \aa$ has a left adjoint $i_L:\aa \to \bb$.

Using Proposition \ref{proposition:ThreadsInVarieties}, it is straightforward to check that $i_L X \cong Y^+$ and $\dim \Hom(X,Y^+) = 1$.

For a $Z \in \bb$, every morphism in $\Hom(X,Z)$ factors through $\eta(1_X) \in \Hom(i\circ i_L X,X)$ where $\eta: 1 \to i\circ i_L$ is the unit of the adjunction $(i_L,i)$.
\end{proof}

In the next proof, we will use the notions of sinks and sources.  An indecomposable object $S \in \ind \aa$ is called a \emph{sink} if $S \to 0$ is a left almost split morphism.  Dually, $S \in \ind \aa$ is called a \emph{source} if $0 \to S$ is a right almost split morphism.  Note that sinks and sources are never thread objects.

\begin{lemma}\label{lemma:SinksAndSources}
For any nonzero $Y \in \aa$, the subcategory $\supp \aa(-,Y)$ contains a source and the subcategory $\supp \aa(Y,-)^*$ contains a sink.
\end{lemma}

\begin{proof}
We will only show that $\supp \aa(-,Y)$ contains a source; the other statement is dual.  Proposition \ref{proposition:Dualizing} yields that $\aa(-,Y)$ is cofinitely presented.  Let $\aa(-,Y) \to \aa(I,-)^*$ be an injective envelope.  We show that any indecomposable direct summand $S$ of $I$ is a source.  Let $M \to S$ be a minimal right almost split morphism.  Seeking a contradiction, assume that $M \not= 0$.  Since $\aa$ is semi-hereditary, we know (by Proposition \ref{proposition:ShIsLocal}) that $M \to S$ induces a monomorphism $\aa(S,-) \to \aa(M,-)$ or thus an epimorphism $\aa(M,-)^* \to \aa(S,-)^*$.

Let $I'$ be the object $I$ after replacing $S$ by $M$.  There is an obvious epimorphism $\aa(I',-)^* \to \aa(I,-)^*$ and using the projectivity of $\aa(-,Y)$, we find the following commutative diagram
$$\xymatrix{\aa(-,Y) \ar@{=}[d] \ar[r] & \aa(I',-)^* \ar[d] \\ \aa(-,Y) \ar[r] & \aa(I,-)^*}$$
From the minimality of $\aa(-,Y) \to \aa(I,-)^*$ follows that the epimorphism $\aa(I',-)^* \to \aa(I,-)^*$ splits and hence that $M \to S$ splits.  A contradiction.  We conclude that $S \in \ind \aa$ is a source.
\end{proof}

\begin{corollary}
Every thread in a semi-hereditary dualizing $k$-variety lies inside a maximal thread.
\end{corollary}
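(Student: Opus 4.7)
My plan is to apply Zorn's Lemma to the poset $\SS$ of threads in $\aa$ containing $[X,Y]$, ordered by inclusion; any maximal element of $\SS$ is the sought maximum thread. The nontrivial task is to exhibit an upper bound for an arbitrary chain $\{[X_\lambda, Y_\lambda]\}_\lambda$ in $\SS$. Using Proposition~\ref{proposition:ThreadsInVarieties}(3) on the left-extensions $[X_\lambda, Y] \subseteq [X_\lambda, Y_\lambda]$ (and its dual for the right-extensions $[X, Y_\lambda]$), any two such sub-threads sharing a common endpoint are comparable, so the chain decomposes into a chain of left-extensions and one of right-extensions, which may be handled independently. By symmetry, I focus on the right case, and after refining via direct successors reduce to a strictly ascending chain $[X, Y_0] \subset [X, Y_1] \subset \cdots$ (strict) with $Y_{n+1} = Y_n^+$; if this chain terminates the maximum term is a bound, so assume it is infinite. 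The task becomes to produce $Y_\infty \in \ind \aa$ with $Y_\infty \geq Y_n$ for all $n$ such that $[X, Y_\infty]$ is a thread.

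The key technical step locates $Y_\infty$ using the finite presentability of the standard injective $\aa(X,-)^*$ in $\mod \aa$, available by Proposition~\ref{proposition:Dualizing}. Fix a surjection $\bigoplus_{i=1}^m \aa(-, A_i) \twoheadrightarrow \aa(X,-)^*$ with finitely many indecomposable $A_i$. Surjectivity at each $Y_n$ forces $\aa(Y_n, A_i) \neq 0$ for some $i$, so by pigeonhole some fixed $A' \in \{A_i\}$ satisfies $\aa(Y_n, A') \neq 0$ for infinitely many $n$. The right-almost-split map $Y_n \to Y_{n+1}$ then propagates this upward: any nonzero map $Y_n \to A'$ with $A' \neq Y_n$ is nonsplit and therefore factors through $Y_n \to Y_{n+1}$, forcing $\aa(Y_{n+1}, A') \neq 0$. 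Since $A' = Y_N$ would allow $\aa(Y_n, A') \neq 0$ for only finitely many $n$, we conclude $A' \notin \{Y_n\}$ and $\aa(Y_n, A') \neq 0$ for all sufficiently large $n$; hence $A'$ lies above the tail of the chain. If $A'$ is itself a thread object and $[X,A']$ consists entirely of thread objects, set $Y_\infty = A'$; otherwise descend by applying left-almost-split at $A'$ and using pigeonhole to pick an indecomposable direct predecessor through which infinitely many of the maps $Y_n \to A'$ factor, then iterate.

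The main obstacle is verifying that this descent terminates at a thread object $Y_\infty$ for which $[X, Y_\infty]$ is actually a thread, i.e., contains no non-thread object. Termination of the descent is guaranteed by Proposition~\ref{proposition:ThreadsInVarieties}(1) applied to $\supp \aa(X,-)^*$: every object reached in the descent lies in this support, which contains only finitely many non-thread objects, and combined with local discreteness (Proposition~\ref{proposition:LocallyFinite}) each step strictly drops to a genuine indecomposable direct predecessor, so after finitely many iterations we arrive at a thread object $Y_\infty$. To see that $[X, Y_\infty]$ contains no non-thread object, note that the intermediate (non-thread) terms of the descent lie strictly above $Y_\infty$ and thus outside $[X, Y_\infty]$; the ``gateway'' property of Corollary~\ref{corollary:MapsOutOfThreads} forces any other non-thread indecomposable $Z$ with $X \leq Z$ to sit beyond $Y_\infty$ as well, since otherwise one could factor some $Y_n \to A'$ through $Z$ to contradict the minimality of the descent. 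Applying the symmetric argument on the left produces $X_0$, so $[X_0, Y_\infty]$ is a thread containing every term of the chain, supplying the Zorn upper bound and yielding a maximum thread containing $[X, Y]$.
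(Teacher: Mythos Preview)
Your Zorn-based approach has a genuine gap, both in the reduction step and in the descent. When you ``refine via direct successors'' to the chain $\{[X,Y_n]\}_n$ with $Y_{n+1}=Y_n^+$, you implicitly assume this sequence is cofinal in the original chain $\{[X,Y_\lambda]\}$. But by Corollary~\ref{corollary:ThreadsPosets} a thread is equivalent to $k\LL$ for a poset of the form $\bN\cdot(\TT\stackrel{\rightarrow}{\times}\bZ)\cdot-\bN$, so the successor sequence $Y_0,Y_0^+,Y_0^{++},\ldots$ stays forever inside a single $\bN$-block and need not dominate those $Y_\lambda$ lying in later blocks. Thus the $Y_\infty$ you construct is only shown to bound the successor chain, not the original one, and Zorn does not apply. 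The same phenomenon breaks your descent: if $A'$ is a thread object but $[X,A']$ contains a nonthread $W\ne A'$, then left-almost-splitness at $A'$ forces $\aa(W,(A')^-)\ne 0$, so $W\in[X,(A')^-]$ as well; the predecessor sequence $A',(A')^-,(A')^{--},\ldots$ is therefore again trapped in a block and may never reach a thread object whose interval down to $X$ is a thread. Your termination argument via Proposition~\ref{proposition:ThreadsInVarieties}(1) only bounds the number of \emph{nonthread} objects encountered, not the thread objects between them.

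The paper's proof bypasses Zorn entirely. Since $\aa(-,Y)$ is cofinitely presented, $\supp\aa(-,Y)$ contains a nonthread object (for instance a sink), and by Proposition~\ref{proposition:ThreadsInVarieties}(1) only finitely many of them; among these one picks a $Z$ that is the sole nonthread object in $[Z,Y]$, and sets $X'=Z^+$. Then $[X',Y]$ is a thread containing $[X,Y]$, left-maximal by construction; the dual argument produces $Y'$. The point is that the finiteness of nonthread objects yields the extremal endpoint directly --- essentially what your descent is reaching for, but executed on the finite set of nonthread objects rather than by walking through (potentially infinitely many) direct predecessors.
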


\begin{proof}
Let $[X,Y]$ be a thread in a semi-hereditary dualizing $k$-variety $\aa$.  By Proposition \ref{proposition:ThreadsInVarieties} there are only finitely many nonthread objects in $\supp \aa(-,Y)$ and by Lemma \ref{lemma:SinksAndSources} we know that there is at least one nonthread object in $\supp \aa(-,Y)$.  Proposition \ref{proposition:ShIsLocal} implies that there are no cycles in $\aa$, so that there is a nonthread object $Z \in \supp \aa(-,Y)$ such that $Z$ is the only nonthread object in $[Z,Y]$.  Let $Z \to M$ be a minimal left almost split morphism.  Since $Z \in \supp \aa(-,Y)$, we know that there is at least one direct summand $X'$ of $M$ which lies in $[Z,Y]$. Since $[X',Y] \subset [Z,Y]$, we know that $[X',Y]$ is a thread, and it follows from the dual of Proposition \ref{proposition:ThreadsInVarieties} that $[X,Y] \subseteq [X',Y]$.

Dually, one finds a thread object $Y'$ such that $[X',Y']$ is a thread containing $[X',Y]$ and such that ${Y'}^+$ is a nonthread object.  The thread $[X',Y']$ is the required maximal thread.
\end{proof}

\begin{proposition}\label{proposition:AllThreads}
Let $\aa$ be a semi-hereditary dualizing $k$-variety and let $\{[X_j,Y_j]\}_{j \in J}$ be a set of threads such that $X_j^- \not\in [X_l,Y_l]$ and $Y_j^+ \not\in [X_l,Y_l]$, for all $j,l \in J$.  Let $\bb$ be the full subcategory of $\aa$ consisting of all objects without direct summands in one of these threads.  Then $\bb$ is a semi-hereditary dualizing $k$-variety and the embedding $i:\bb \to \aa$ has both a left and a right adjoint.
\end{proposition}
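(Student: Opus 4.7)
The plan is to construct left and right adjoints $i_L, i_R : \aa \to \bb$ explicitly, using Corollary \ref{corollary:MapsOutOfThreads} as the main tool, and then invoke Proposition \ref{proposition:DualizingIfLeftAndRightAdjoint} to deduce that $\bb$ is dualizing, with semi-hereditariness following formally from Proposition \ref{proposition:ShIsLocal}.

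First I would record two preliminary observations. Distinct maximal threads are disjoint, because an indecomposable shared by two of them would force the threads to agree in its one-step neighborhood (by uniqueness of direct predecessors and successors), and iterating, the two maximal threads would coincide. Moreover, by the maximality of each $[X_i, Y_i]$, the direct predecessor $X_i^-$ and direct successor $Y_i^+$ of the endpoints must themselves be nonthread objects, since otherwise the thread could be extended; hence $X_i^-, Y_i^+ \in \bb$.

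Next, I would define $i_L$ on indecomposables by $i_L A = A$ for $A \in \ind \bb$ and $i_L A = Y_i^+$ for $A \in \ind [X_i, Y_i]$. For each thread object $A$, fix a nonzero unit morphism $\eta_A : A \to Y_i^+$, unique up to scalar since $\dim \aa(A, Y_i^+) = 1$ by Proposition \ref{proposition:ThreadsInVarieties}(2); set $\eta_A = \id_A$ for $A \in \ind\bb$. By Corollary \ref{corollary:MapsOutOfThreads}, for every $B \in \bb$, composition with $\eta_A$ induces a bijection $\Hom_\aa(i_L A, B) \xrightarrow{\sim} \Hom_\aa(A, iB)$; this simultaneously defines $i_L$ on morphisms (via the unique factorization $f = (i_L f) \circ \eta_{A_1}$ for each $f: A_1 \to A_2$) and exhibits the adjunction. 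Functoriality of $i_L$ is forced by the uniqueness of this factorization. The right adjoint $i_R$ is constructed dually, sending each thread object to $X_i^-$.

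The category $\bb$ inherits from $\aa$ the structure of a finite $k$-variety: it is $k$-linear, Hom-finite, closed under direct sums, and idempotents split (a summand of an object without thread summands again has no thread summand). With both adjoints established, Proposition \ref{proposition:DualizingIfLeftAndRightAdjoint} yields that $\bb$ is a dualizing $k$-variety, and since every finite full subcategory of $\bb$ is also a finite full subcategory of the semi-hereditary $\aa$, Proposition \ref{proposition:ShIsLocal} gives that $\bb$ is semi-hereditary. The main subtlety, and the step where I would expect the most bookkeeping, is checking the functoriality of $i_L$ on morphisms whose source and target lie in different maximal threads; this hinges on the uniqueness clause in the universal property supplied by Corollary \ref{corollary:MapsOutOfThreads}, but requires care in keeping track of the scalar ambiguities inherent in the chosen units $\eta_A$.
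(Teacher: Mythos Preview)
Your overall strategy---reduce to constructing left and right adjoints, then apply Propositions~\ref{proposition:ShIsLocal} and~\ref{proposition:DualizingIfLeftAndRightAdjoint}---is exactly what the paper does. The paper, however, obtains the adjoints in one stroke by invoking Proposition~\ref{proposition:WhenAdjoints} (after noting that the maximal threads are pairwise disjoint), whereas you unwind this through Corollary~\ref{corollary:MapsOutOfThreads}. Since that corollary is itself a consequence of Proposition~\ref{proposition:WhenAdjoints}, your route is not genuinely different, just more explicit; what it buys is a concrete description of $i_L$ and $i_R$ on indecomposables, at the cost of the functoriality bookkeeping you flag at the end.

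There is one real gap. Your disjointness argument by ``iterating'' direct predecessors and successors from a shared indecomposable $A$ does not work for infinite threads: a thread is equivalent to $k\LL$ for a locally discrete linearly ordered poset (Corollary~\ref{corollary:ThreadsPosets}), and such posets---for instance $\bN \cdot -\bN$---need not be generated from any single point by the $\pm$ operations. So the $\pm$-orbit of $A$ can be a proper subset of each thread, and you cannot conclude the two maximal threads coincide. The paper instead appeals to Proposition~\ref{proposition:ThreadsInVarieties}, and indeed part~(3) (together with its dual) gives the clean argument: if $A$ lies in both $[X_1,Y_1]$ and $[X_2,Y_2]$, then $[A,Y_1]$ and $[A,Y_2]$ are comparable, as are $[X_1,A]$ and $[X_2,A]$, and maximality forces equality.

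Two smaller points. First, your citation of Proposition~\ref{proposition:ThreadsInVarieties}(2) for $\dim \aa(A,Y_i^+)=1$ is not quite right, since $[A,Y_i^+]$ is not a thread ($Y_i^+$ is a nonthread object, as you correctly argue); the one-dimensionality is true but follows from the adjunction itself (as in the proof of Corollary~\ref{corollary:MapsOutOfThreads}, where $i_L X \cong Y^+$ gives $\aa(X,Y^+)\cong\End(Y^+)=k$). Second, to apply Corollary~\ref{corollary:MapsOutOfThreads} with $A$ in place of $X_i$, you need that $[A,Y_i]$ is itself a thread, i.e.\ that $\ind[A,Y_i]\subseteq\ind[X_i,Y_i]$; this is true (use the right adjoint of $[X_i,Y_i]\hookrightarrow\aa$ from Proposition~\ref{proposition:WhenAdjoints}(3)) but should be said.
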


\begin{proof}
By Propositions \ref{proposition:ShIsLocal} and \ref{proposition:DualizingIfLeftAndRightAdjoint} it suffices to show $i$ has both a left and a right adjoint.  We will show that $i$ has a right adjoint, the proof for the left adjoint is similar.

To show that $i$ has a right adjoint, we will show that, for each $A \in \ind \aa$, the functor $\aa(i-,A)|_\bb: \bb \to \mod k$ is representable.  If $A \in \bb$, then $\aa(i-,A)|_\bb \cong \bb(-,A)$.

We may thus assume that $A \not\in \bb$ or, equivalently, that there is a $j \in J$ such that $A \in [X_j,Y_j]$.  Let $\bb_j$ be the full subcategory of $\aa$ consisting of objects with no nonzero direct summands lying in $[X_j,Y_j]$.  It follows from Proposition \ref{proposition:WhenAdjoints} that $\bb_j \to \aa$ has a right adjoint $R_j: \aa \to \bb_j$.  Since $[X_j,A]$ is a thread, the dual of Corollary \ref{corollary:MapsOutOfThreads} shows that $R_j(A) \cong X^-_j$.  Since $X_l^- \not\in [X_j,Y_j]$ for all $l \in \JJ$, we know that $X^-_j \in \bb$.  It is now easy to see that $\aa(i-,A) \cong \bb(-,X^-_j)$.

We conclude that $\bb \to \aa$ has a right adjoint.
\end{proof}

\begin{corollary}\label{corollary:AllThreads}
Let $\aa$ be a semi-hereditary dualizing $k$-variety and let $\{[X_j,Y_j]\}_{j \in J}$ be a set of maximal threads.  Let $\bb$ be the full subcategory of $\aa$ consisting of all objects without direct summands in one of these threads.  Then $\bb$ is a semi-hereditary dualizing $k$-variety and the embedding $i:\bb \to \aa$ has both a left and a right adjoint.
\end{corollary}

\begin{proof}
If $[X_j,Y_j]$ is a maximal thread, then neither $X_j^-$ nor $Y_j^+$ are thread objects.  Hence the conditions in Proposition \ref{proposition:AllThreads} are satisfied and the conclusion follows.
\end{proof}

\section{Classification by thread quivers}\label{subsection:DualizingThreadQuiver}

In this section, we will classify semi-hereditary dualizing $k$-varieties by means of thread quivers (defined below).  We start with a special case.

\begin{proposition}\label{proposition:PathCategory}
Let $\aa$ be a semi-hereditary dualizing $k$-variety without infinite threads.  Write $Q$ for the Auslander--Reiten quiver of $\aa$, then $\aa$ is equivalent to $kQ$.  Moreover, $Q$ is a strongly locally finite quiver.
\end{proposition}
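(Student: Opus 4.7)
The plan is to first show that the Auslander--Reiten quiver $Q$ of $\aa$ is strongly locally finite, and then construct and verify an equivalence $F\colon kQ \to \aa$.

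For local finiteness at a vertex $X \in \ind \aa$, Proposition \ref{proposition:LocallyFinite} provides minimal left and right almost split maps $N \to X$ and $X \to M$, and Hom-finiteness together with the Krull--Schmidt property force $N$ and $M$ to split into finitely many indecomposable summands. To rule out infinite paths, suppose $X_0 \to X_1 \to \cdots$ is an infinite sequence of irreducible morphisms. All $X_i$ lie in $\supp \aa(X_0,-)^*$, which by Proposition \ref{proposition:ThreadsInVarieties}(1) contains only finitely many nonthread objects, so a tail consists exclusively of thread objects. Each thread object sits in a unique maximal thread, which is finite by hypothesis. If $X_i$ and $X_{i+1}$ are both thread objects, then $X_{i+1} = X_i^+$ must lie in the same maximal thread as $X_i$ --- otherwise that maximal thread could be enlarged across $X_{i+1}$, contradicting maximality. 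Hence the tail can only enter distinct maximal threads by passing through nonthread objects, producing infinitely many such objects in $\supp \aa(X_0,-)^*$, a contradiction.

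Next, define $F\colon kQ \to \aa$ by sending each vertex to its representative in $\ind\aa$ and each arrow $\alpha\colon X \to Y$ of $Q$ to a chosen morphism $F(\alpha) \in \rad(X,Y)$ such that the family $\{F(\alpha)\}_\alpha$ projects to a basis of $\rad(X,Y)/\rad^2(X,Y)$; extend by composition and $k$-linear additive closure. Essential surjectivity is immediate. For fullness, strong local finiteness of $Q$, via K\"onig's lemma applied to the tree of paths out of $X$, bounds the length of any path from $X$ to $Y$ by some $N(X,Y)$, giving $\rad^{N(X,Y)+1}(X,Y) = 0$; iteratively factoring a radical morphism $X \to Y$ through the source of the minimal right almost split map into $Y$ then terminates and expresses it as a $k$-linear combination of $F$-images of paths.

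The main obstacle is faithfulness. By Observation \ref{observation:MoreDualizing}(1), $kQ$ is itself a semi-hereditary dualizing $k$-variety, so by Corollary \ref{corollary:ShDualizing} both $\mod kQ$ and $\mod \aa$ are abelian hereditary with Serre duality, and $F$ transports the system of almost split sequences of $kQ$ into that of $\aa$ by construction. The idea is to show inductively --- using finiteness of every interval $[X,Y]$ to guarantee termination --- that $\dim_k \aa(X,Y) = \dim_k kQ(X,Y)$ for all $X,Y \in \ind\aa$; combined with the fullness established above, this forces $F$ to be faithful. The inductive step applies $\aa(X,-)$ to the almost split sequence ending at $Y$, terminates the resulting long exact sequence at $\Ext^1$ by heredity, and employs the Auslander--Reiten formula (via Serre duality) to rewrite those $\Ext^1$ groups as dual Hom-spaces to strictly smaller objects; the analogous computation inside $\mod kQ$ yields the required comparison, and hence the equivalence $F\colon kQ \to \aa$.
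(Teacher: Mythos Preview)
Your overall architecture --- prove strong local finiteness, build $F$, establish fullness by factoring through irreducibles, then establish faithfulness by comparing dimensions inductively on $|[X,Y]|$ --- is sound, and is considerably more detailed than the paper's proof, which essentially asserts full faithfulness without justification and says nothing about strong local finiteness.  Your arguments for local finiteness, for the absence of infinite paths via Proposition~\ref{proposition:ThreadsInVarieties}(1), and for fullness are correct.

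The faithfulness paragraph, however, contains a genuine error.  The object $\aa(-,Y)$ is projective in $\mod\aa$, so there is \emph{no} almost split sequence in $\mod\aa$ ending at it; there is nothing to apply $\aa(X,-)$ to, and the Auslander--Reiten formula (which converts $\Ext^1$ between non-projectives into Hom-spaces involving $\tau$) is not the relevant tool here, since $\Ext^1(\aa(-,X),-)=0$ anyway.  What you actually need is much simpler and uses only heredity.  The minimal right almost split map $M\to Y$ in $\aa$ yields the minimal projective presentation
\[
\aa(-,M)\longrightarrow \aa(-,Y)\longrightarrow S_Y\longrightarrow 0
\]
of the simple $S_Y$ in $\mod\aa$.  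Because $\mod\aa$ is hereditary, the submodule $\rad\aa(-,Y)\subseteq\aa(-,Y)$ is itself projective, hence isomorphic to $\aa(-,M')$ for some $M'$; the induced map $M'\to Y$ is right almost split and right minimal, so $M'\cong M$ and the presentation above is in fact short exact:
\[
0\longrightarrow \aa(-,M)\longrightarrow \aa(-,Y)\longrightarrow S_Y\longrightarrow 0.
\]
Evaluating at $X$ gives $\dim_k\aa(X,Y)=\dim_k\aa(X,M)+\delta_{X,Y}$.  The same identity holds in $kQ$ tautologically (for path categories $\rad kQ(-,Y)=\bigoplus_{\alpha:Z\to Y}kQ(-,Z)$), and the summands of $M$ are exactly the immediate predecessors of $Y$, each satisfying $|[X,Z]|<|[X,Y]|$ since $\aa$ has no oriented cycles.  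Induction on $|[X,Y]|$ then gives $\dim_k kQ(X,Y)=\dim_k\aa(X,Y)$, and together with fullness this forces $F$ to be faithful.  Replace your last paragraph with this argument and the proof goes through.
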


\begin{proof}
For any two indecomposables $A,B \in \ind \aa$, fix a basis for $\Irr(A,B) \cong \rad(A,B) / \rad^2(A,B)$ and choose a corresponding set of irreducible maps $A \to B$.  This gives a map $f: Q_1 \to \Mor \aa$, mapping the arrows in $Q$ to the chosen irreducible maps in $\aa$.  This induces an essentially surjective functor $F:kQ \to \aa$; we will show that this maps lifts to an equivalence $kQ \to \aa$.

Since $\aa$ is semi-hereditary, the functor $F$ is faithful (this follows from Proposition \ref{proposition:ShIsLocal}).  Consequently, since $\aa$ is Hom-finite, the quiver $Q$ has no cycles.

Seeking a contradiction, assume that $F$ is not full.  Let $X,Y \in \ind kQ$ and $f \in \aa(FX,XY)$ such that $f$ does not lie in $F(kQ(X,Y))$.  By construction of $F$, we know that a right almost split morphism $M \to Y$ in $kQ$ will be mapped to a right almost split morphism $FM \to FY$ in $\aa$.  If the map $f:FX \to FY$ is not an isomorphism, then $f$ factors through $FM \to FY$ and hence there is an indecomposable direct summand $Y_1$ of $M$ such that the map $kQ(X,Y_1) \to \aa(FX,FY_1)$ is not surjective.

Continuing this procedure, we obtain a sequence of irreducible $\cdots \to Y_n \to Y_{n-1} \to \cdots \to Y_1 \to Y$ such that $kQ(X,Y_n) \to \aa(FX,FY_n)$ is not surjective, for each $n$.  We have the following possibilities.

Either $Y_n \cong X$ for some $n \in \bN$ (and the above procedure does not create an infinite path).  In this case, we know that $kQ(X,Y_n) \to \aa(FX,FY_n)$ is an isomorphism.  Indeed, it follows from Proposition \ref{proposition:ShIsLocal}) that both $kQ(X,Y_n)$ and $\aa(FX,FY_n)$ are finite--dimensional local hereditary rings, thus $kQ(X,Y_n) \cong \aa(FX,FY_n) \cong k$.  Since the map $kQ(X,Y_n) \to \aa(FX,FY_n)$ is nonzero, we know it is an isomorphism and in particular surjective.

The second possibility is that the above construction gives an infinite sequence $\cdots \to Y_n \to Y_{n-1} \to \cdots \to Y_1 \to Y$.  Since $\aa$ does not have infinite threads (by assumption) and $[FX,FY]$ has only finitely many nonthread objects (Proposition \ref{proposition:ThreadsInVarieties}), we know that there is a cycle in this sequence, thus $Y_n \cong Y_m$ for some $m,n \in \bN$.  By Proposition \ref{proposition:ShIsLocal} we know that the indecomposable objects on this cycle form a semi-hereditary category, and such a category cannot be Hom-finite.  We have found a contradiction.

We conclude that the functor $F: kQ \to \aa$ is fully faithful and essentially surjective, thus an equivalence.  Since $\aa$ is a dualizing $k$-variety, we know that $Q$ is strongly locally finite.
\end{proof}

The previous proposition does not hold if $\aa$ has infinite threads.  However, in the case of dualizing $k$-varieties, these categories are nicely behaved (Proposition \ref{proposition:ThreadsInVarieties}) such that an analog of Proposition \ref{proposition:PathCategory} may be proven when $\aa$ does contain infinite threads, by replacing the quiver $Q$ by a thread quiver.

A \emph{thread quiver} $Q$ consists of the following information:
\begin{itemize}
\item A quiver $Q_u=(Q_0,Q_1)$ where $Q_0$ is the set of vertices and $Q_1$ is the set of arrows.
\item A decomposition $Q_1 = Q_s \coprod Q_t$.  Arrows in $Q_s$ will be called \emph{standard arrows}, while arrows in $Q_t$ will be referred to as \emph{thread arrows}.
\item For every thread arrow $\alpha \in Q_t$, there is an associated linearly ordered set $\TT_\alpha$, possibly empty.
\end{itemize}

In drawing a thread quiver (cf. Figure \ref{fig:ThreadQuiver}), standard arrows will be represented by $\xymatrix@1{\bullet \ar[r]& \bullet}$, while thread arrows will be drawn as $\xymatrix@1{\bullet\ar@{..>}[r]&\bullet}$ labeled by the corresponding ordered set $\TT$.  If $\TT = \{1,2,3, \ldots, n\}$ with the normal ordering, then we will only write $n$ as a label; if $\TT = \emptyset$, then no label will be written.

\begin{figure}
\exa
$$\xymatrix{\bullet \ar[r]\ar[rd]&\bullet \ar@<-2pt>[d] \ar@{..>}@<2pt>[d]^{\bZ} & \bullet \ar@{..>}[l]_{3} \ar@{..>}[r] & \bullet\\
&\bullet}$$
\exb
$$\xymatrix{\bullet \ar[r]\ar[rd]&\bullet \ar@<-2pt>[d] \ar@<2pt>[d] & \bullet \ar[l] \ar[r] & \bullet\\
&\bullet}$$
\exc
\caption{An example of a thread quiver $Q$ (left) and the corresponding underlying quiver $Q_u$ (right)}
\label{fig:ThreadQuiver}
\end{figure}

Every thread quiver $Q$ has an underlying quiver where there is no distinction between the standard and the thread arrows, and where all arrows are unlabeled.  To avoid confusion, we will refer to this underlying quiver by $Q_u$.

The following proposition will be useful in Construction \ref{construction:MakeThreadQuiver}.

\begin{proposition}\label{proposition:MakeThreadQuiver}
Let $\aa$ be a semi-hereditary dualizing $k$-variety.  Let $\{ [X_j,Y_j] \}_{j \in J}$ be the set of all maximal \emph{infinite} threads in $\aa$.  Let $\bb$ be the subcategory of $\aa$ consisting of all objects without direct summands in $\bigcup_{j \in J} \ind [X_j,Y_j]$.  Then $\bb$ is a semi-hereditary dualizing $k$-variety without infinite threads.
\end{proposition}

\begin{proof}
It follows from Corollary \ref{corollary:AllThreads} that $\bb$ is a semi-hereditary dualizing $k$-variety.  If we show that $\bb$ has no infinite threads, then the statement follows from Proposition \ref{proposition:PathCategory}.

Let $X,Y \in \ind \bb$ such that $[X,Y] \subseteq \bb$ is an infinite thread.  Since $X,Y \in \bb$, we know that $[iX,iY] \subseteq \aa$ is not an infinite thread, where $i: \bb \to \aa$ is the natural embedding.  Thus $[iX,iY]$ contains a nonthread object $Z \in \ind \aa$.  By the definition of $\bb$, we know that $Z \in \bb$; we will show that $Z \in \bb$ is a nonthread object.

Consider a minimal right almost split morphism $M \to Z$ in $\aa$.  Without loss of generality, assume that $M$ is not indecomposable.  Let $i_L: \aa \to \bb$ be a left adjoint to $i: \bb \to \aa$.  We will now show that $i_L M \to i_L Z \cong Z$ is a minimal right almost split morphism.  That it is a right almost split morphism follows from the properties of a left adjoint.  To see that $i_L M \to Z$ is minimal, let $M' \to i_L M$ be a split monomorphism such that the composition $M' \to i_L M \to Z$ is zero.  Since $\aa$ is semi-hereditary, we know that $i_l M \to Z$ is a monomorphism (here we use Proposition \ref{proposition:ShIsLocal} and that $M \to Z$ is minimal).  It then follows that $M' = 0$ and hence $i_L M \to Z$ is minimal.  Finally, $i_L M$ is not indecomposable if $M$ is not indecomposable.  Indeed, let $M_j$ be a direct summand of $M$.  If $M_j$ does not lie on a maximal infinite thread, then $i_L M_j \cong M_j$, and if $M_j \in [X_j,Y_j]$ for some maximal infinite thread $[X_j,Y_j]$, then $X_j^- \to M_j$ factors through $i_L M_j \to M_j$ such that $i_L M_j \not= 0$.

Thus $Z \in \bb$ is a nonthread object lying in $[X,Y] \subseteq \bb$, and thus $[X,Y]$ is not an infinite thread.
\end{proof}

Starting from a semi-hereditary dualizing $k$-variety, we will associate a thread quiver in the following way.

\begin{construction}\label{construction:MakeThreadQuiver}
Let $\aa$ be a semi-hereditary dualizing $k$-variety.  Let $\bb$ be as in Proposition \ref{proposition:MakeThreadQuiver}.  Proposition \ref{proposition:PathCategory} yields that $\bb \cong kQ_u$ for a strongly locally finite quiver $Q_u$.

According to Corollary \ref{corollary:ThreadsPosets}, every maximal infinite thread $[X_i,Y_i] \subset \ind \aa$ corresponds to a poset of the form $\bN \cdot (\TT \stackrel{\rightarrow}{\times} \bZ) \cdot -\bN$ where $\TT$ is a linearly ordered set.  We will replace one arrow from $\xymatrix@1{X^- \ar[r] & Y^+}$ in $Q_u$ by $\xymatrix@1{X^- \ar@{..>}[r]^{\TT} & Y^+}$.
\end{construction}

\begin{example}\label{example:Construction}
Let $\LL = \bN \cdot (\TT \stackrel{\rightarrow}{\times} \bZ) \cdot -\bN$, then the thread quiver associated with $k\LL$ is
$$\xymatrix@1{\cdot \ar@{..>}[r]^{\TT} & \cdot}$$
\end{example}

Given a thread quiver $Q$, we will construct an associated semi-hereditary finite $k$-variety as follows.

\begin{construction}\label{construction:ThreadQuiver}
Let $Q$ be a thread quiver with underlying strongly locally finite quiver $Q_u$.  With every thread $t \in Q_t$, we denote by $f^t: k(\cdot \to \cdot) \longrightarrow kQ_u$ the functor associated with the obvious embedding $(\cdot \to \cdot) \longrightarrow Q_u$.  We define the functor
$$f: \bigoplus_{t \in Q_t} k(\cdot \to \cdot) \longrightarrow kQ_u.$$

With every thread $t$, there is an associated linearly ordered set $\TT_t$.  We will write $\LL_t = \bN \cdot (\TT_t \stackrel{\rightarrow}{\times} \bZ) \cdot -\bN$ and denote by
$$g^t: k(\cdot \to \cdot) \longrightarrow k\LL_t$$
the $k$-linear functor induced by mapping the extremal points of $\cdot \to \cdot$ to the minimal and maximal objects of $\LL$, respectively.  We will write
$$g: \bigoplus_{t \in Q_t} k(\cdot \to \cdot) \longrightarrow \bigoplus_{t \in Q_t} k\LL_t.$$

We define the category $kQ$ as a 2-pushout in $\kVar$ of the following diagram.

$$\xymatrix{
\bigoplus_{t \in Q_t} k (\cdot \to \cdot) \ar[r]^-{f} \ar[d]_{g} & {k Q_u} \ar@{-->}^{i}[d] \\
\bigoplus_{t \in Q_t} k\LL_t \ar@{-->}[r]_-{j} & kQ
}$$
\end{construction}

\begin{remark}
The construction above is similar to the procedure outlined in Example \ref{example:PushOut}, where the category $A_4$ is replaced by an infinite linearly ordered set.
\end{remark}

\begin{remark}
Given a thread quiver $Q$, Construction \ref{construction:ThreadQuiver} will define $kQ$ only up to equivalence.
\end{remark}

\begin{example}
Let $Q$ be the thread quiver given by
$$\xymatrix@1{\cdot \ar@{..>}[r]^{\TT} & \cdot}$$
as in Example \ref{example:Construction}.  The associated finite $k$-variety is equivalent to $k(\bN \cdot \TT \stackrel{\rightarrow}{\times} \bZ \cdot -\bN)$.
\end{example}

\begin{remark}
Different thread quivers $Q,Q'$ may give rise to equivalent categories $kQ$ and $kQ'$ as shown in the following example.
\end{remark}

\begin{example}
The following three thread quivers give rise to equivalent categories.
$$\xymatrix@1{
\cdot \ar@{..>}[r]^{1} & \cdot & \cdot \ar@{..>}[r] & \cdot \ar@{..>}[r]& \cdot& \cdot \ar@{..>}[r]^{1} & \cdot \ar[r]& \cdot} $$
\end{example}

\begin{observation}
For each thread $t \in Q_t$, the functor $f^t$ has a left adjoint (denoted by $f^t_L$) and a right adjoint (denoted by $f^t_R$).  The induced functors
$$f_L, f_R : kQ_u \longrightarrow \bigoplus_{t \in Q_t} k(\cdot \to \cdot)$$
are left and right adjoints for $f$, respectively.  Note that $f^t$ will always be faithful, but in general not full.  Its adjoints $f^t_L$ and $f^t_R$ are in general not full nor faithful.

Similarly, for each thread $t \in Q_t$, the functor $g^t$ has a left adjoint (denoted by $g^t_L$) and a right adjoint (denoted by $g^t_R$), giving rise to the functors
$$g_L, g_R : \bigoplus_{t \in Q_t} k\LL_t \longrightarrow \bigoplus_{t \in Q_t} k(\cdot \to \cdot)$$
which are then left and right adjoints for $g$, respectively.  The functor $g$ is fully faithful and its adjoints $g_L$ and $g_R$ are faithful.
\end{observation}

\begin{example}\label{example:ftNotFull}
The functor $f^t$ is not full when $Q$ is given by the following thread quiver.
$$\xymatrix{\cdot \ar@<2pt>@{..>}[r] \ar@<-2pt>[r] & \cdot}$$
\end{example}

In what follows it will be convenient to assume the functors $f^t$ are fully faithful.  For this we will use the following lemma.

\begin{lemma}\label{lemma:ChooseGoodQuiver}
Let $Q$ be a thread quiver.  There is a thread quiver $Q'$ such that
\begin{enumerate}
\item $kQ \cong kQ'$, and
\item the functors ${f'}^t: k(\cdot \to \cdot) \longrightarrow kQ'_u$ from Construction \ref{construction:ThreadQuiver} are fully faithful.
\end{enumerate}
\end{lemma}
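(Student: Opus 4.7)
The plan is to construct $Q'$ from $Q$ by subdividing each thread arrow $t$ for which $f^t$ fails to be fully faithful, inserting a fresh intermediate vertex so that the new thread arrow lands in a vertex reachable only through itself.

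First I would pinpoint when $f^t : k(\cdot \to \cdot) \to kQ_r$ can fail to be fully faithful. Since the underlying quiver $Q_r$ is strongly locally finite, it has no oriented cycles, so $x_t \neq y_t$ and both endpoints have only scalar endomorphisms in $kQ_r$. Consequently $f^t$ is automatically faithful, and the sole possible obstruction is failure of fullness on the component $\Hom(\cdot_1,\cdot_2) \to kQ_r(x_t,y_t)$, which occurs exactly when $\dim_k kQ_r(x_t, y_t) > 1$, i.e.\ when there is some oriented path in $Q_r$ from $x_t$ to $y_t$ other than the thread arrow $t$ itself (as in Example \ref{example:ftNotFull}).

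Next I would define $Q'$ explicitly. For each problematic thread arrow $t: x_t \to y_t$ of $Q$, introduce a fresh vertex $v_t$ (distinct from every vertex of $Q$ and from the other newly introduced $v_{t'}$), replace $t$ by a thread arrow $t' : x_t \to v_t$ carrying the same linearly ordered set $\TT_{t'} = \TT_t$, and add a new standard arrow $v_t \to y_t$. All other data of $Q$ is preserved. Since $v_t$ is fresh and has $t'$ as its only incoming arrow in $Q'_r$, the unique oriented path in $Q'_r$ terminating at $v_t$ passes through $t'$, so $\dim_k kQ'_r(x_t, v_t) = 1$ and ${f'}^{t'}$ is fully faithful. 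Unmodified thread arrows remain fully faithful by hypothesis, and $Q'_r$ is easily seen to remain strongly locally finite since each insertion only replaces one arrow by a path of length two through a new vertex.

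Finally I would show $kQ \cong kQ'$ by comparing the two 2-pushout diagrams of Construction \ref{construction:ThreadQuiver}. The only difference is that, for each modified thread $t$, the segment between $x_t$ and $y_t$ in $kQ$ is modelled by $\LL_t = \bN \cdot (\TT_t \stackrel{\rightarrow}{\times} \bZ) \cdot -\bN$ with $x_t = \min$ and $y_t = \max$, whereas in $kQ'$ it is modelled by $\LL_{t'}$ (the same abstract poset) with $x_t = \min$ and $v_t = \max$, followed by an additional standard arrow $v_t \to y_t$. The key order-theoretic observation is that $-\bN \cdot \{y_t\}$ is order-isomorphic to $-\bN$ via the shift $-n \mapsto -(n+1)$, $y_t \mapsto -0$, giving an isomorphism $\LL_{t'} \cdot \{y_t\} \cong \LL_t$ of linearly ordered sets that sends $y_t$ to $\max \LL_t$ and $v_t$ to its unique predecessor. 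The hard part will be turning this family of order isomorphisms into a genuine equivalence of the 2-pushouts in $\kAC$: concretely, I plan to build compatible functors $kQ'_r \to kQ$ and $\bigoplus_{t' \in Q'_t} k\LL_{t'} \to kQ$ that implement the shift on each modified thread and are the identity elsewhere, verify the required compatibility 2-cells against the morphisms $f',g'$ in Construction \ref{construction:ThreadQuiver}, and then invoke the universal property of the 2-pushout (together with Remark \ref{remark:Modifications}) to produce the equivalence $kQ' \to kQ$.
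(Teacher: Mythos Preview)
Your approach is correct and takes a genuinely different route from the paper's. The paper replaces \emph{every} thread arrow $t: x_t \to y_t$ by a path $x_t \to \cdot \dashrightarrow^{\TT} \cdot \to y_t$ with \emph{two} fresh vertices, so that the new thread arrow runs between fresh vertices on both ends; it then proves $kQ \cong kQ'$ by passing through an auxiliary 2-pushout in which the source $\bigoplus_t kA_2$ is replaced by $\bigoplus_t kA_4$, and using a pasting argument (outer square and one inner square are 2-pushouts, hence so is the other) to identify both $kQ$ and $kQ'$ with this common pushout. You instead subdivide only the offending threads, insert a single fresh vertex on the target side, and argue the equivalence directly from the order isomorphism $-\bN \cdot \{y_t\} \cong -\bN$.

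What your approach buys is economy and a transparent combinatorial reason for the equivalence. What the paper's approach buys is that the comparison $kQ \cong kQ'$ becomes a purely formal 2-categorical pasting computation, with no explicit functors or 2-cells to construct by hand; the symmetric two-sided subdivision is what makes the $A_4$ intermediary sit cleanly between the two pushout diagrams. Your final step (building the compatible functors and checking the 2-cells, then also producing the inverse equivalence or verifying essential surjectivity and full faithfulness) is where the real work lies and is only sketched; it goes through, but the paper's pasting argument avoids precisely this bookkeeping.

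One small remark: you invoke strong local finiteness of $Q_r$ to rule out oriented cycles, but the lemma as stated carries no such hypothesis. If some oriented cycle in $Q_r$ passes through a thread arrow, neither your construction nor the paper's yields a fully faithful ${f'}^t$. The paper's proof makes the same tacit assumption, and the lemma is only ever applied in the strongly locally finite setting, so this is harmless in context.
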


\begin{proof}
Define a thread quiver $Q'$ by replacing all thread arrows $\xymatrix{x \ar@{..>}[r]^\TT & y}$ in the thread quiver $Q$ by $\xymatrix{x \ar[r]& \cdot \ar@{..>}[r]^\TT & \cdot\ar[r] & y}$.  Since the associated functors ${f'}^t: k(\cdot \to \cdot) \longrightarrow kQ'_u$ are fully faithful by Proposition \ref{proposition:ThreadsInVarieties}, we need only to prove $kQ \cong kQ'$.

We start by defining a 2-pushout
$$\xymatrix{
\bigoplus_{t \in Q'_t} k (a_t \to b_t \to c_t \to d_t) \ar[r]^-{f_1} \ar[d]_{g_1} & {k Q'_u} \ar@{-->}[d] \\
\bigoplus_{t \in Q'_t} k\LL_t \ar@{-->}[r] & \pp
}$$
in $\kVar$ where the maps are given by the following descriptions.  The functor ${f_1}^t$, for each thread $t:\xymatrix{x_t \ar@{..>}[r]^\TT & y_t}$ in $Q'$, is given as a faithful functor which maps $b_t$ and $c_t$ to $x_t$ and $y_t$ respectively, and $a_t$ and $d_t$ to the direct predecessor of $x_t$ and the direct successor of $y_t$, respectively.

The functor $g_1^t$ is a fully faithful functor given by
\begin{eqnarray*}
a_t &\mapsto& 0_t \\
b_t &\mapsto& 1_t \\
c_t &\mapsto& -1_t \\
d_t &\mapsto& -0_t
\end{eqnarray*} 
where $0_t$ and $-0_t$ are the minimal and maximal element of $\LL_t$ respectively, $1_t$ is the direct successor of $0_t$, and $-1_t$ is the direct predecessor of $-0_t$.

Define a functor
$$H_1: \bigoplus_{t \in Q_t} k (x_t \to y_t) \longrightarrow \bigoplus_{t \in Q_t} k (a_t \to b_t \to c_t \to d_t)$$
by mapping $x_t$ to $a_t$, $y_t$ to $d_t$, and the generator $x_t \to y_t$ to the composition $a_t \to b_t \to c_t \to d_t$.  We also consider the natural embedding of categories $F: kQ_u \longrightarrow kQ_u'$.

Consider the following diagram where we write $A_2$ and $A_4$ for $(\cdot \to \cdot)$ and $(\cdot \to \cdot \to \cdot \to \cdot)$ respectively:

$$\xymatrix@+15pt{
\bigoplus_{t} k A_2 \ar[r]^f \ar[d]_{H_1} & kQ_u \ar[d]_F \ar@/^3pc/[dd]^i \\
\bigoplus_{t} k A_4 \ar[r]^{f_1} \ar[d]_{g_1}& kQ'_u \\
\bigoplus_{t} k\LL_t \ar[r]_j & kQ
}$$
where the outer diagram is a 2-pushout.  It is readily verified that the upper square is also a 2-pushout.  The 2-natural transformation given by
$$\mbox{$i: kQ_u \to kQ$ and $j \circ g_1: \bigoplus_{t} k A_4  \to \bigoplus_{t \in Q_t} k\LL_t \to kQ$}$$
induces a functor $i': kQ'_u \to kQ$.  Using the universal property, it is now straightforward to check that the lower square is also a 2-pushout.  This shows that $\pp \cong kQ$.

To show that $\pp \cong kQ'$, consider the 2-pushout 
$$\xymatrix@C+30pt{
\bigoplus_{t \in Q'_t} k (x'_t \to y'_t) \ar[r]^{f'} \ar[d]_{g'} & {k Q'_u} \ar@{-->}[d]^{i'} \\
\bigoplus_{t \in Q'_t} k\LL'_t \ar@{-->}[r]^{j'} & kQ'
}$$
where $\LL'_t = \bN_0 \cdot (\TT_t \stackrel{\rightarrow}{\times} \bZ) \cdot -\bN_0$ and $\bN_0= \bN \setminus \{0\}$.  Note that $\LL'_t \cong \LL_t$. 

We define a functor
$$H_2: \bigoplus_{t \in Q'_t} k (x'_t \to y'_t) \longrightarrow \bigoplus_{t \in Q'_t} k (a_t \to b_t \to c_t \to d_t)$$
by mapping $x'_t$ to $b_t$ and $y'_t$ to $c_t$.  We also consider the natural embedding of categories $G: \bigoplus_{t \in Q'_t} k\LL'_t \longrightarrow \bigoplus_{t \in Q'_t} k\LL_t$.

As before, one obtains a diagram
$$\xymatrix@C+25pt@R+10pt{
\bigoplus_{t} k A_2 \ar[r]^{H_2} \ar[d]_{g'} & \bigoplus_{t} k A_4 \ar[d]_{g_1} \ar[r]^-{f_1}& kQ'_u \ar[d]^i \\
\bigoplus_{t} k\LL'_t \ar[r]^-{G} \ar@/_2pc/[rr]_{j'}& \bigoplus_{t} k\LL_t & kQ'
}$$
where the left square and the outer diagram are 2-pushouts.  Again, one finds a morphism $\bigoplus_{t} k\LL_t \to kQ'$ making the right square a 2-pushout.  This shows that $\pp \cong kQ'$ and thus $kQ \cong kQ'$.
\end{proof}

\begin{example}
Let $Q$ be the quiver of Example \ref{example:ftNotFull}.  The quiver $Q'$ constructed in the proof of Lemma \ref{lemma:ChooseGoodQuiver} is given by
$$\xymatrix@R=5pt@C=25pt{ & \cdot \ar@{..>}[r]& \cdot \ar[rd] \\
\cdot \ar[rrr] \ar[ru] &&& \cdot}$$
\end{example}

\begin{remark}
The thread quiver $Q'$ will be strongly locally finite if and only if $Q$ is.
\end{remark}

\begin{proposition}\label{proposition:ijAdjoints}
The functor $i$ from Construction \ref{construction:ThreadQuiver} is fully faithful and has both a left and a right adjoint.  If $f^s$ is fully faithful for a thread arrow $s \in Q_t$, then the functor $j^s$ is fully faithful and has both a left and a right adjoint.
\end{proposition}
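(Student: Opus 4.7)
The plan is to construct both adjoints of $i$ directly from the universal property of the 2-pushout, using that $g$ is fully faithful with adjoints (per the observation following Construction \ref{construction:ThreadQuiver}); deduce fully-faithfulness from the adjunction; and finally use the symmetry between $f$ and $g$ in the pushout to treat $j^s$.

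I would begin by constructing $i_L$. Since $g_L \dashv g$ and $g$ is fully faithful, the counit $\epsilon: g_L \circ g \Rightarrow \Id$ is a natural isomorphism. The pair of functors $F_1 = \Id_{kQ_r}$ and $F_2 = f \circ g_L: \bigoplus_t k\LL_t \to kQ_r$, together with the natural isomorphism
\[
F_1 \circ f \;=\; f \;\xrightarrow{\;f \,\bullet\, \epsilon^{-1}\;}\; f \circ g_L \circ g \;=\; F_2 \circ g,
\]
is a 2-natural transformation from the pushout diagram to the constant diagram at $kQ_r$. The universal property of the 2-pushout yields a functor $i_L: kQ \to kQ_r$ with $i_L \circ i \cong \Id_{kQ_r}$ and $i_L \circ j \cong f \circ g_L$. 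To check that $i_L \dashv i$, I build the unit $\mu: \Id_{kQ} \Rightarrow i \circ i_L$ using Remark \ref{remark:Modifications}: it is determined by its components on the images of $i$ and $j$. I take $\mu \bullet i = \Id_i$ (legitimate because $i \circ i_L \circ i \cong i$ by construction) and $\mu \bullet j = j \bullet \eta_g$, where $\eta_g: \Id \Rightarrow g \circ g_L$ is the unit of $g_L \dashv g$, identifying $j \circ g \circ g_L \cong i \circ f \circ g_L \cong i \circ i_L \circ j$ via the pushout square. The modification compatibility and the triangle identities for $i_L \dashv i$ reduce, on the images of $i$ and $j$, to either trivial identities or to the triangle identities for $g_L \dashv g$. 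Hence $i_L$ is a genuine left adjoint, and the isomorphism $i_L \circ i \cong \Id_{kQ_r}$ is the counit of the adjunction, so $i$ is fully faithful. Dually, replacing $g_L$ by $g_R$ (and using that the unit of $g \dashv g_R$ is invertible) produces a right adjoint $i_R$.

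For $j^s$ I exploit the $f$--$g$ symmetry of the 2-pushout. I would factor $kQ$ as an iterated 2-pushout: let $\aa$ be the 2-pushout built from the sub-diagram involving only the threads $t \neq s$, so that $kQ$ is the 2-pushout of the span
\[
k\LL_s \;\xleftarrow{\;g^s\;}\; k(\cdot \to \cdot) \;\xrightarrow{\;\tilde f^s\;}\; \aa,
\]
where $\tilde f^s$ is the composite of $f^s$ with the canonical functor $kQ_r \to \aa$. Applying the first part of the proposition to the sub-pushout $\aa$, this canonical functor is fully faithful with both adjoints; combined with the hypothesis that $f^s$ is fully faithful (with its always-existing adjoints $f^s_L, f^s_R$), the composite $\tilde f^s$ is fully faithful with both adjoints. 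The argument of the previous paragraph now applies verbatim with $\tilde f^s$ and $g^s$ in the roles formerly played by ``$g$'' and ``$f$'', yielding fully-faithfulness of $j^s$ together with both adjoints.

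The main technical obstacle is the modification-compatibility check and the triangle-identity verification in the construction of the adjunction $i_L \dashv i$; these are routine but delicate because all functors in sight are defined only up to natural isomorphism via the universal property of the 2-pushout, so every identification has to be bookkept carefully. Once this is done cleanly, both assertions of the proposition follow by the same mechanism.
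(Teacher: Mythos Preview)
Your proposal is correct and follows essentially the same strategy as the paper: the paper isolates your direct construction of $i_L$ (together with the delicate triangle-identity verification you flag) as a separate appendix result (Theorem~\ref{theorem:Pushout}) and then simply cites it. For $j^s$ the paper factors the iterated 2-pushout in the opposite order --- thread $s$ first to get an intermediate $\pp$, then the remaining threads, so that $j^s = i' \circ F^s$ is a composite of two functors each handled by Theorem~\ref{theorem:Pushout} --- whereas you push out the remaining threads first so that $j^s$ is itself a pushout leg; both factorizations are instances of the same pasting principle for 2-pushouts and yield the result by the same mechanism.
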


\begin{proof}
Since $g$ is fully faithful and has a left and a right adjoint, Theorem \ref{theorem:Pushout} shows the same properties hold for $i$.

Let $s \in Q_t$ be a thread arrow such that $f^s: k(\cdot \to \cdot) \longrightarrow kQ_u$ is fully faithful.  We will construct the left adjoint of $j^s: k\LL_s \to kQ$.  First, consider the 2-pushouts
\exa
$$\xymatrix@+10pt{k(\cdot \to \cdot) \ar[r]^{f^s} \ar[d]_{g^s} \xtwocell[rd]{}\omit{^<0>\alpha}& kQ_u \ar@{-->}[d]^{G}\\
k\LL_s \ar@{-->}[r]_{F^s} & \pp}$$
\exb
$$\xymatrix@+10pt{\bigoplus_{t \not= s} k(\cdot \to \cdot) \ar[r]^-{(G \circ f^t)_t}\ar[d]_{(g^t)_{t}} \xtwocell[rd]{}\omit{^<0>\beta} & \pp \ar@{-->}[d]^{i'}\\
\bigoplus_{t \not= s} k\LL_t \ar@{-->}[r] & \pp'}$$
\exc
Using the universal property, it is straightforward to proof that $\pp' \cong kQ$ and that $j^s = i' \circ F^s$.  Since both $f^s$ and $(g^t)_{t \not= s}$ are fully faithful and have a left and a right adjoint, Theorem \ref{theorem:Pushout} yields that the same properties hold for $F^s$ and $i'$, and hence also for $j^s$.
\end{proof}

\begin{lemma}\label{lemma:jAdmitsAdjoints}
Let $Q$ is a thread quiver with a strongly locally finite underlying quiver $Q_u$.  If $f^s$ is fully faithful for every thread arrow $s \in Q_t$, then the functor $j: \bigoplus_{t \in Q_t} k\LL_t \to kQ$ has a left and a right adjoint.
\end{lemma}

\begin{proof}
It follows from Proposition \ref{proposition:ijAdjoints} that each functor $j^t: k\LL_t \to kQ$ has a left adjoint functor $j_L^t$ and a right adjoint functor $j_R^t$.  We want to show that the functor $j_R: kQ \to \bigoplus_{t \in Q_t} k\LL_t$ given by $j_R(A) = \oplus_t j_R^t(A)$ is a right adjoint.  To show this, it suffices to show that for every $A \in kQ$ we have $j_R^t(A) = 0$ for all but finitely many thread arrows $t \in Q_t$.

By the Yoneda lemma, we know that $j_R^t(A) \not= 0$ if and only if $k\LL_t(-,j_R^t(A)) \not= 0$ and thus if and only if $k\LL(0_t, j_R^t(A)) \not= 0$, where $0_t \in \LL_t$ is the unique minimal element.  Let $x_t \in k(\cdot \to \cdot)$ such that $g^t(x_t) \cong 0_t$ (this object exists by the definition of $g^t$ as given in Construction \ref{example:Construction}).

Using that $j^t \circ g^t \cong i \circ f^t$, we find that $j_R^t(A) \not= 0$ if and only if $kQ(j^t \circ g^t (x_t), A) \cong kQ(i \circ f^t(x_t), A) \not= 0$.  Since $Q_u$ is strongly locally finite, we know there are only finitely many indecomposables $X \in \ind kQ_u$ such that $kQ(i(X),A) \cong kQ_u(X,i_R A)\not= 0$.  From this then follows that $j_R^t(A) \not= 0$ for only finitely many thread arrows $t \in Q_t$, and we conclude that $j: \bigoplus_{t \in Q_t} k\LL_t \to kQ$ has a right adjoint.

The proof that $j$ admits a left adjoint is similar.
\end{proof}

\begin{example}
Let $Q$ be the thread quiver given by $\xymatrix@1{\cdots \ar[r] & \ar@{..>}[r] & \ar[r] & \ar@{..>}[r] & \ar[r] & \cdots}$, thus the underlying quiver is a linearly orientated $A_\infty$-quiver and the arrows are alternatingly standard arrows and thread arrows.  In this case, the underlying quiver $Q_u$ is not strongly locally finite and the functor $j$ does not admit a right adjoint.
\end{example}

%
\begin{proposition}\label{proposition:Representable}
Let $Q$ is a thread quiver with a strongly locally finite underlying quiver $Q_u$, and assume that $f^s$ is fully faithful for every thread arrow $s \in Q_t$.  Let $\eta:M \to N$ be a morphism in $\Mod kQ$.  Then
\begin{enumerate}
\item $\eta: M \to N$ is an epimorphism (monomorphism) if and only if both $\eta \bullet 1_i: M \circ i \to N \circ i$ and $\eta \bullet 1_j: M \circ j \to N \circ j$ are epimorphisms (monomorphisms),
\item a representation $M \in \Mod kQ$ is finitely generated if and only if the restrictions $M \circ i$ and $M \circ j$ are finitely generated, and 
\item a representation $M \in \Mod kQ$ is finitely presented if and only if the restrictions $M \circ i$ and $M \circ j$ are finitely presented.
\end{enumerate}
\end{proposition}

\begin{proof}
First, if $\eta: M \to N$ is an epimorphism then so are $\eta \bullet 1_i$ and $\eta \bullet 1_j$ since restriction is an exact functor.  For the other direction, let $\mu: N \to C$ be a cokernel of $F$.  Then $(\mu \circ \eta) \bullet 1_i = (\mu \bullet 1_i) \circ (\eta \bullet 1_i) = 0$ and hence $\mu \bullet 1_i = 0$ since $\eta \bullet 1_i$ is an epimorphism.  Similarly, we show that $\mu \bullet 1_j = 0$, and hence $\mu: N \to C$ is the zero map.  We conclude that $\eta: M \to N$ is an epimorphism.

Next, let $M \in \Mod kQ$ be finitely generated, thus there is an epimorphism $kQ(-,X) \to M$.  Since restriction is an exact functor, there is an epimorphism $kQ(i-,X) \to M \circ i$, and hence $kQ(-,i_R X) \to M \circ i$.  This shows that $M \circ i$ is finitely generated.  Similarly, one shows that $M \circ j$ is finitely generated.

For the other direction, assume that both $M \circ i$ and $M \circ j$ are finitely generated, thus there are epimorphisms $kQ_u(-,A) \to M \circ i$ and $k\LL(-,B) \to M \circ j$, for some $A \in kQ_u$ and $B \in k\LL$.  Using the first part, one checks that the natural map $kQ(-,iA) \oplus kQ(-,jB) \to M$ is an epimorphism.  This shows that $M$ is finitely generated.

For the last statement, the proof that when $M$ is finitely presented, then so are $M \circ i$ and $M \circ j$ is similar as in the finitely generated case.  Thus assume that both $M \circ i$ and $M \circ j$ are finitely presented.  We already know that $M$ is finitely generated, thus there is an epimorphism $kQ(-,X) \to M$.  Let $K$ be the kernel.  Using again that restriction is exact, we know that $K \circ i$ is the kernel of $kQ(i-,X) \to M \circ i$ and that $K \circ j$ is the kernel of $kQ(j-,X) \to M \circ j$, and thus both $K \circ i$ and $K \circ j$ are finitely generated.  We conclude that $K$ is finitely generated and hence $M$ is finitely presented.

\end{proof}

The following proposition gives sufficient conditions for Construction \ref{construction:ThreadQuiver} to give a semi-hereditary dualizing $k$-variety.


\begin{proposition}\label{proposition:kQIsGood}
If $Q$ is a thread quiver with a strongly locally finite underlying quiver $Q_u$, then $kQ$ is a semi-hereditary dualizing $k$-variety.
\end{proposition}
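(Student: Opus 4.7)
The plan is to verify that $kQ$ satisfies each defining property of a semi-hereditary dualizing $k$-variety by reducing through the 2-pushout description to the component categories $kQ_r$ and the $k\LL_t$, each of which is already a semi-hereditary dualizing $k$-variety.

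First I would invoke Lemma~\ref{lemma:ChooseGoodQuiver} to replace $Q$ by an equivalent thread quiver for which each $f^t$ is fully faithful; this preserves $kQ$ up to equivalence and preserves the strong local finiteness of $Q_r$. By Proposition~\ref{proposition:ijAdjoints}, $i:kQ_r \to kQ$ and each $j^s:k\LL_s \to kQ$ are then fully faithful and admit both left and right adjoints. By Observation~\ref{observation:MoreDualizing}(1), $kQ_r$ is a semi-hereditary dualizing $k$-variety; each $k\LL_t$ is the $k$-linear additive hull of a locally discrete linear poset with minimum and maximum, which one checks directly is a semi-hereditary dualizing $k$-variety via Proposition~\ref{proposition:Dualizing} (the Hom-spaces are $0$- or $1$-dimensional, and the standard (in)jectives are transparent to describe).

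Next I would establish that $kQ$ is itself a finite $k$-variety: as a 2-pushout in $\kAC$ it is $k$-linear additive with split idempotents by construction, and Hom-finiteness follows from the explicit model of the 2-colimit recalled at the end of \S\ref{subsection:2Colimits}, since the adjunctions of $i$ and $j^s$ reduce each Hom-space between indecomposables of $kQ$ to a Hom-space in $kQ_r$ or in some $k\LL_t$, all of which are finite-dimensional. Semi-hereditariness then follows from Proposition~\ref{proposition:ShIsLocal} once I observe that any full subcategory $\cc \subset kQ$ on finitely many objects is the additive hull of a finite quiver with commutativity relations, obtained by pulling back a finite subquiver of $Q_r$ together with finite subintervals of the $\LL_t$'s and gluing via the pushout data; $\mod \cc$ is then abelian and hereditary by classical arguments for finite quivers with relations.

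For the dualizing property, Proposition~\ref{proposition:Dualizing} already grants pseudokernels and pseudocokernels from semi-hereditariness, so it remains to show standard projectives are cofinitely presented and standard injectives are finitely presented. Given an indecomposable $x \in kQ$, the restrictions of $kQ(-, x)$ along $i$ and along the $j^s$ are standard projectives in $kQ_r$ and in the relevant $k\LL_s$ respectively, and are cofinitely presented there since those component categories are dualizing. An injective coresolution of $kQ(-, x)$ in $\mod kQ$ is then assembled by taking the component coresolutions and splicing them along the compatibility data of the 2-pushout, using that standard injectives in $\mod kQ_r$ and in each $\mod k\LL_s$ push forward to standard injectives in $\mod kQ$ via the right adjoints $i_R$ and $j^s_R$. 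The argument for standard injectives is dual.

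The hard part will be this last step: constructing and verifying the injective coresolution of $kQ(-, x)$ for an indecomposable $x$ that sits at an endpoint of one or more thread arrows, where contributions from $kQ_r$ and from several $k\LL_s$'s must be glued consistently into a single exact complex of standard injectives in $\mod kQ$. The triple description of $\mod kQ$ sketched in the introduction --- itself a consequence of the universal property of the 2-colimit --- is the natural bookkeeping device for making this splicing rigorous.
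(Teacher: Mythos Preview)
Your overall strategy---reduce everything through the 2-pushout to the components $kQ_r$ and $k\LL_t$---matches the paper's, but two of your reductions are carried out differently, and one of them has a real gap.

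\textbf{Semi-hereditariness.} Your argument via Proposition~\ref{proposition:ShIsLocal} is not complete. The claim that any finite full subcategory $\cc$ of $kQ$ is ``the additive hull of a finite quiver with commutativity relations'' and that $\mod\cc$ is then hereditary ``by classical arguments for finite quivers with relations'' does not hold: quivers with relations are not hereditary in general, and a finite full subcategory need not arise from a convex piece of the pushout data (you may pick objects that skip over intermediate vertices, producing Hom-spaces of dimension $>1$ with no visible quiver description). The paper avoids this entirely by checking semi-hereditariness directly: for a map $kQ(-,A)\to kQ(-,B)$, restrict along $i$ and $j$ to get maps in $\mod kQ_r$ and $\mod k\LL$; since both are hereditary the kernels are projective and split off there, and the resulting split kernels glue through the pushout compatibility to give a split projective kernel in $\mod kQ$. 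This is exactly the ``triple description'' you mention, applied to a single morphism rather than to a local subcategory.

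\textbf{Dualizing property.} Your plan to verify Proposition~\ref{proposition:Dualizing} by splicing componentwise injective coresolutions would work but is harder than necessary, and the endpoint case you flag as ``the hard part'' is genuinely delicate. The paper instead checks the \emph{definition} of dualizing directly: given $M\in\mod kQ$, the triple $(N,L,\alpha)$ lies in $\mod kQ_r\times\mod k\LL$; applying $D=\Hom_k(-,k)$ componentwise gives $(DN,DL,D\alpha)$, and since $kQ_r$ and $k\LL$ are dualizing these land in $\mod kQ_r^\circ\times\mod k\LL^\circ$, hence $DM\in\mod kQ^\circ$. No explicit coresolutions are built.

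A minor point: idempotents in $kQ$ do not split ``by construction''---$\kAC$ consists of additive categories, not Karoubian ones. The paper handles this with a separate lemma (Lemma~\ref{lemma:CauchyComplete}) showing that 2-colimits in $\kAC$ preserve Cauchy completeness.
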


\begin{proof}
We will assume that the quiver $Q$ satisfies the property of Lemma \ref{lemma:ChooseGoodQuiver}.   It follows from Proposition \ref{proposition:ijAdjoints} that $i$ has a left adjoint $i_L$ and a right adjoint $i_R$, and it follows from Lemma \ref{lemma:jAdmitsAdjoints} that $j$ has a left adjoint $j_L$ and a right adjoint $j_R$.  A finitely generated projective module $kQ(-,A) \in \Mod kQ$ thus corresponds to a diagram
$$\xymatrix{
\bigoplus_{t} k (\cdot \to \cdot) \ar[r]^-{f} \ar[d]_{g} \xtwocell[rd]{}\omit{^<0>\alpha}& {k Q_u} \ar[d]^{i} \ar@/^2pc/[rdd]^{kQ_u(-,i_R A)} \xtwocell[rdd]{}\omit{<0>\gamma}\\
k\LL \ar[r]_-{j} \ar@/_2pc/[rrd]_{\LL(-,j_R A)} \xtwocell[rrd]{}\omit{^<0>\beta}& kQ\ar@{-->}[rd]|-{kQ(-,A)} \\
&& \Mod k}$$
where we have written $k\LL = \bigoplus_{t} k \LL_t$.  Using that the restriction functors are exact, this shows that the natural embedding $\Mod kQ \to \Mod kQ_u \oplus \Mod k\LL$ induces a faithful functor $\mod kQ \to \mod kQ_u \oplus \mod k\LL$.  Since the latter category is Hom-finite, so is the former, and hence $kQ$ is a finite $k$-variety.

To show that $kQ$ is semi-hereditary (thus that $\mod kQ$ is abelian and hereditary), we will use Proposition \ref{proposition:ShIsLocal}.  Thus let $X \in kQ$ be any object.  We will show that there is a semi-hereditary full subcategory of $kQ$ which contains $X$.

For each thread arrow $t \in Q_t$, let $\LL'_t$ be the (finite) subposet of $\LL_t$ containing the minimum and the maximum of $\LL_t$, together with the elements corresponding to the indecomposable direct summands of $j_R^t(X)$.  The inclusion $\LL'_t \subset \LL_t$ induces a fully faithful functor $k\LL'_t \to k\LL_t$ and $j_R^t(X)$ lies in the essential image of that functor.  Also note that $k\LL'_t \cong kA_{n_t}$ for some $n_t \geq 2$.  We define $kQ'$ as the 2-pushout of the diagram
$$\xymatrix{
\bigoplus_{t} k (\cdot \to \cdot) \ar[r] \ar[d] & {k Q_u}\\
\bigoplus_{t} k\LL'_t}$$
Using the universal property of the 2-pushout, the functors $k\LL'_t \to k\LL_t \to kQ$ and $kQ_u \stackrel{\sim}{\rightarrow} kQ_u \to kQ$ induce a functor $kQ' \to kQ$.  Moreover, one easily checks that $kQ'$ is the $k$-linear additive path category of the quiver $Q'$ given by replacing the thread arrow $t \in Q$ by the corresponding $A_{n_t}$-quiver (this can be done using the universal property of the 2-pushout, or using the explicit construction as was done in Example \ref{example:PushOut})

To check that $kQ' \to kQ$ is fully faithful, it suffices to check that the induced functor $-\otimes_{kQ'} kQ:\Mod kQ' \to \Mod kQ$ is fully faithful on projectives; it then follows from Lemma \ref{lemma:TensorIsFF} that the functor $kQ' \to kQ$ is fully faithful.  The verification is straightforward, using that $-\otimes_{k\LL'_t} k\LL_t$ is fully faithful for each $t \in Q_t$.

Note that an object $M \in \Mod kQ$ lies in the essential image of $-\otimes_{kQ'} kQ$ if and only if $M \circ j^t \in \mod k\LL_t$ lies in the essential image of $-\otimes_{k\LL'_t} k\LL_t$ for each $t \in Q_t$.  Hence $kQ(-,X)$ lies in the essential image of $-\otimes_{kQ'} kQ$ and thus $X$ lies in the essential image of $kQ' \to kQ$.  We have shown that $X$ lies in a semi-hereditary full subcategory of $kQ$, and hence it follows that $kQ$ is semi-hereditary.

To show that $kQ$ is a dualizing $k$-variety, it suffices to show that standard projectives are cofinitely presented and standard injectives are finitely presented (see Proposition \ref{proposition:Dualizing}).  Let $X \in kQ$ and let $kQ(X,-)^*$ be a standard injective.  Since the functors $i: kQ_u \to kQ$ and $j: k\LL \to kQ$ admit left adjoints, the restrictions $kQ(X,i-)^*$ and $kQ(X,j-)^*$ of $kQ(X,-)^*$ are standard injectives as well.  Since $kQ_u$ and $k\LL$ are dualizing $k$-varieties, these objects are finitely presented and thus, by Proposition \ref{proposition:Representable}, so is $kQ(X,-)^*$.  Dually, one can show that the standard projectives are cofinitely presented.  We conclude that $kQ$ is a dualizing $k$-variety.

\end{proof}

We are now ready to complete the classification of semi-hereditary dualizing $k$-varieties by thread quivers.

\begin{theorem}\label{theorem:ThreadQuivers}
Let $\aa$ be a semi-hereditary dualizing $k$-variety, then $\aa$ may be obtained by Construction \ref{construction:ThreadQuiver} from a thread quiver $Q$ where $Q$ is strongly locally finite.
\end{theorem}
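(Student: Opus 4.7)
The plan is to extract a thread quiver $Q$ from $\aa$ via Construction \ref{construction:MakeThreadQuiver}, and then verify that the 2-pushout $kQ$ of Construction \ref{construction:ThreadQuiver} is equivalent to $\aa$ via the universal property.

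First, I let $\{[X_i,Y_i]\}_{i \in I}$ denote the set of all maximal infinite threads in $\aa$, and let $\bb$ be the full subcategory of $\aa$ on all objects without indecomposable summands in any $[X_i,Y_i]$. By Proposition \ref{proposition:AllThreads}, $\bb$ is a semi-hereditary dualizing $k$-variety, and the embedding $\bb \hookrightarrow \aa$ has both a left and a right adjoint. Since we removed every maximal infinite thread, all remaining intervals $[A,B]$ in $\bb$ are finite, so Proposition \ref{proposition:PathCategory} applies and furnishes an equivalence $\bb \simeq kQ_r$ for a strongly locally finite quiver $Q_r$. By Corollary \ref{corollary:ThreadsPosets}, each thread $[X_i,Y_i]$ is equivalent to $k\LL_i$ with $\LL_i$ of the form $\bN \cdot (\TT_i \stackrel{\rightarrow}{\times} \bZ) \cdot -\bN$ for some linearly ordered set $\TT_i$. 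The thread quiver $Q$ is then defined by taking $Q_r$ and replacing, for each $i \in I$, the unique arrow $X_i^- \to Y_i^+$ in $Q_r$ coming from Corollary \ref{corollary:MapsOutOfThreads} by a thread arrow labeled $\TT_i$.

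Second, I construct a comparison functor $\Phi : kQ \to \aa$. For this I apply the universal property of the 2-pushout of Construction \ref{construction:ThreadQuiver} to the cocone in $\kAC$ consisting of the inclusion $kQ_r \simeq \bb \hookrightarrow \aa$, the inclusions $k\LL_i \simeq [X_i,Y_i] \hookrightarrow \aa$, and, for each thread $t = i$, the natural isomorphism between the two ways of mapping $k(\cdot \to \cdot)$ into $\aa$ (namely through the chosen arrow $X_i^- \to Y_i^+$ in $\bb$ and through the composition $X_i^- \to X_i \to \cdots \to Y_i \to Y_i^+$ inside $[X_i^-,Y_i^+]$). That these composites are isomorphic follows from Proposition \ref{proposition:ThreadsInVarieties}(2) and Corollary \ref{corollary:MapsOutOfThreads}, which guarantee one-dimensional Hom-spaces forcing a unique nonzero compatibility. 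Without loss of generality we may further replace $Q$ by the quiver $Q'$ provided by Lemma \ref{lemma:ChooseGoodQuiver} so that the functors $f^t$ are fully faithful, which in turn by Proposition \ref{proposition:ijAdjoints} yields that both $i$ and each $j^t$ in the 2-pushout are fully faithful embeddings admitting adjoints.

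Third, I show $\Phi$ is an equivalence. Essential surjectivity is immediate: every indecomposable of $\aa$ either lies in $\bb$ (hence in the image of $i$) or belongs to some maximal infinite thread $[X_i,Y_i]$ (hence in the image of $j^i$). For fully faithfulness, I use the explicit Grothendieck-style description of the 2-colimit recalled in \S\ref{subsection:2Colimits}: morphisms in $kQ$ between two objects are represented by zigzags through the pushout diagram, subject to localization at the isomorphisms $\SS$. Because $i$ and each $j^t$ are fully faithful with adjoints, every such zigzag collapses to a single Hom-space computation in either $kQ_r$ or some $k\LL_t$; the same is true inside $\aa$ by Corollary \ref{corollary:MapsOutOfThreads} applied to each thread, together with the fact that maps between objects in distinct threads must factor through $\bb$. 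Matching these descriptions term by term proves $\Phi$ is fully faithful.

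The main obstacle is the bookkeeping of step three: verifying that the zigzag presentation of Hom-spaces in $kQ$ coincides, summand by summand, with the actual Hom-spaces in $\aa$. Here the crucial inputs are Proposition \ref{proposition:ThreadsInVarieties}, which forces uniqueness of the connecting maps into and out of a thread, and Corollary \ref{corollary:MapsOutOfThreads}, which forces any morphism leaving a thread to factor through its endpoint in $\bb$; together these make the universal map $\Phi$ bijective on Hom-spaces, completing the proof.
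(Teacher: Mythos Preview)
Your approach is essentially the paper's own: extract the thread quiver via Construction~\ref{construction:MakeThreadQuiver}, use the universal property of the 2-pushout to produce the comparison functor, then verify essential surjectivity and fully faithfulness using Proposition~\ref{proposition:ThreadsInVarieties} together with Corollary~\ref{corollary:MapsOutOfThreads}. One small correction: in your cocone the functor $k\LL_i \to \aa$ must send the minimum and maximum of $\LL_i$ to $X_i^-$ and $Y_i^+$ (so its image is $[X_i^-,Y_i^+]$, not $[X_i,Y_i]$), since $g^t$ maps the two dots of $k(\cdot\to\cdot)$ to the extremal elements of $\LL_t$ while $f^t$ maps them to $X_i^-,Y_i^+$ in $Q_r$; otherwise the required natural isomorphism between $i^\aa\circ f$ and $j^\aa\circ g$ does not exist.
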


\begin{proof}
It has been established in Proposition \ref{proposition:kQIsGood} that $kQ$ is indeed a semi-hereditary dualizing $k$-variety, so that we need only to show that every semi-hereditary dualizing $k$-variety arises in this way (up to equivalence).

Let $\aa$ be a semi-hereditary dualizing $k$-variety and let $Q$ be the corresponding thread quiver as in Construction \ref{construction:MakeThreadQuiver}.  We will work with the thread quiver constructed in Lemma \ref{lemma:ChooseGoodQuiver}, so that we can apply Proposition \ref{proposition:ijAdjoints} and Lemma \ref{lemma:jAdmitsAdjoints}.  We will show that $\aa \cong kQ$.

There are obvious embeddings $i^\aa: kQ_u \to \aa$ and $j^{t,\aa}: k\LL_t \to \aa$ giving the following diagram
$$\xymatrix{
\bigoplus_{t} k (\cdot \to \cdot) \ar[r]^-{f} \ar[d]_{g} \xtwocell[rd]{}\omit{^<0>\alpha}& {k Q_u} \ar[d]^{i} \ar@/^2pc/[rdd]^{i^\aa} \xtwocell[rdd]{}\omit{<0>}\\
\oplus_t k\LL_t \ar[r]_-{j} \ar@/_2pc/[rrd]_{j^\aa} \xtwocell[rrd]{}\omit{^<0>}& kQ\ar@{-->}[rd]_{F} \\
&& \aa}$$
We want to show that the functor $F: kQ \to \aa$ is an equivalence.  Since the induced functor $kQ_u \oplus k\LL \to \aa$ is essentially surjective, so is the functor $F$.  We will continue by showing $F$ is also fully faithful.  First note that $i^\aa: kQ_u \to \aa$ has a right adoint $i^\aa_R$ and a left adjoint $i^\aa_L$ by Proposition \ref{proposition:AllThreads}, and that $j^{\aa,t}: k\LL_t \to \aa$ has a right adoint $j^{\aa,t}_R$ and a left adjoint $j^{\aa,t}_L$ by Proposition \ref{proposition:WhenAdjoints}.

Since $i^\aa \cong F \circ i$ and both $i$ and $i^\aa$ are fully faithful, $F$ induces a bijection $kQ(iX,iY) \to \aa(FiX, FiY)$, for each $X,Y \in kQ_u$.

Likewise, for each $t \in Q_t$, the functors $j^t$ and $j^{\aa,t}$ are fully faithful, and hence $F$ induces a bijection $kQ(j^t X,j^t Y) \to \aa(F j^t X, F j^t Y)$, for each $X,Y \in k\LL_t$.

We will first show that $F: kQ \to \aa$ is faithful.  Let $h \in kQ(X,Y)$ for any $X,Y \in \ind kQ$; it suffices to show that $Fh \not= 0$.  Since $kQ$ is semi-hereditary (Proposition \ref{proposition:kQIsGood}), we may invoke Proposition \ref{proposition:ShIsLocal} to see that $h$ is a monomorphism when $h \not= 0$.  Let $\epsilon: i \circ i_R \to 1_{kQ}$ be the counit of the adjunction.  To show that $h \circ \epsilon_X: i \circ i_R(X) \to X \to Y$ is nonzero, it thus suffices to show that $\epsilon_X: i \circ i_R(X) \to X$ is nonzero.

Since $X$ is nonzero, so is $kQ(-,X)$ and by the universal property of the 2-pushout, the functors $kQ(i-,X) \cong kQ(-,i_R X)$ and $kQ(j^t-,X) \cong kQ(-,j^t_R X)$ are not all zero (ranging over all $t \in Q_t$).  If $i_R X \not\cong 0$, then neither is $\epsilon_X: i \circ i_R(X) \to X$ since $i$ is fully faithful.  Thus assume that $i_R X \cong 0$ and thus there is a $t \in Q_t$ such that $j_R^t X \not\cong 0$ and hence $g_R^t \circ j_R^t X \not\cong 0$.  We then find that $g_R \circ j_R \not\cong 0$ and since both $f_R \circ i_R$ and $g_R \circ j_R$ are right adjoint to $j \circ g \cong i \circ f$, we find that $i_R X \not\cong 0$.  Contradiction.

We may thus conclude that $h \circ \epsilon_X \not= 0$.  Using that $Y$ is indecomposable, and hence $h \circ \epsilon_X$ is an epimorphism, we may similarly prove that $\eta_Y \circ h \circ \epsilon_X: i \circ i_R X \to i \circ i_L Y$ is nonzero, where $\eta: 1_{kQ} \to i \circ i_L$ is the unit of the adjunction.

Since $i: kQ_u \to kQ$ is fully faithful, there is a map $h': i_R X \to i_L Y$ such that $\eta_Y \circ h \circ \epsilon_X = i(h')$.  Since $i^{\aa}(h') \not= 0$, we may conclude that $F(\eta_Y \circ h \circ \epsilon_X) = F(\eta_Y) \circ F(h) \circ F(\epsilon_X) \not= 0$ and hence $F(h) \not= 0$.  Hence $F$ is faithful.

We will now show that $F$ is full.  Let $X,Y \in kQ$ and $h \in \aa(FX,FY)$.  We want to show that there is a map $h' \in kQ(X,Y)$ such that $F(h') = h$.  If $X,Y$ both lie in the essential image of either $i$ or $j^t$, for some $t \in Q_t$, then the statement follows easily from $i$ or $j^t$ being full, respectively.

We will consider the case where $X$ lies in the essential image of $j^t$ (for some $t \in Q_t$) and $Y$ lies in the essential image of $i$.  The other cases are handled in a similar way.  Without loss of generality, we may assume that there is an $X' \in k\LL_t$ and a $Y' \in kQ_u$ such that $j^t(X') = X$ and $i(Y') = Y$.

By Corollary \ref{corollary:MapsOutOfThreads}, we know that there is an indecomposable object $A \in \aa$, lying in the essential image of both $i^\aa$ and $j^{\aa,t}$,  such that the map $h:FX \to FY$ factors as $FX \to A \to FY$.  Moreover, we can write $h = h_2 \circ \xi \circ h_1$ where $h_1: FX \to j^{\aa,t} A_1$, $h_2: i^\aa A_2 \to FY$, and $\xi: j^{\aa,t} A_1 \stackrel{\sim}{\rightarrow} i^{\aa} A_2$.  Since $i^\aa \cong F \circ i$ and $j^{\aa,t} \cong F \circ j^t$, we may assume that both $j^{\aa,t} A_1$ and $i^\aa A_2$ lie in the image of $F$.  Since each of these maps then lies in the image of $F$, so does the composition.  This implies that $F$ is full, and concludes the proof.

\end{proof}

\section{Representations of thread quivers}\label{section:Representations}

Let $Q$ be a thread quiver.  We define the categories $\Rep_k Q$ and $\rep_k Q$ to be $\Mod kQ$ and $\mod kQ$, respectively.  We have the following result.

\theoremB*

\begin{proof}
For the first statement, recall from Proposition \ref{proposition:kQIsGood} that $kQ$ is a semi-hereditary dualizing $k$-variety.  The statement then follows from Corollary \ref{corollary:ShDualizing}.

We now turn to the second statement.  Let $\aa$ be the category of projectives of $\AA$, thus $\aa$ is a finite $k$-variety.  The embedding $\aa \to \AA$ lifts to a fully faithful right exact functor $\mod \aa \to \AA$, which is an equivalence since $\AA$ has enough projectives.  By definition $\aa$ is semi-hereditary and Theorem \ref{theorem:RepSerreDuality} yields that $\aa$ is a dualizing $k$-variety.  The result now follows from Theorem \ref{theorem:ThreadQuivers}.
\end{proof}

With a thread quiver $Q$, there is thus an associated 2-functor $\aa: \Free(\cdot \leftarrow \cdot \rightarrow \cdot) \longrightarrow \kVar$ given by
$$\xymatrix{
\bigoplus_{t \in Q_t} k (\cdot \to \cdot) \ar[r]^-{f} \ar[d]_{g} & {k Q_u} \\
\bigoplus_{t \in Q_t} k\LL_t
}$$
The category $\Rep_k Q$ is equivalent to $\Hom_{2\FF}(\aa,\Mod k)$.  Thus the objects of the category $\Rep_k Q$ are given by
\begin{enumerate}
\item an object $N(-):kQ_u \to \Mod k$ of $\Mod kQ_u$,
\item for every thread $t$, an object $L_t(-):\LL_t \to \Mod k$ of $\Mod k\LL_t$, and
\item a natural equivalence $\alpha: \oplus_t L_t(g-) \Rightarrow N(f-)$.
\end{enumerate}
The morphisms are given by the modifications, thus given the data $(N,\{L_t\}_t, \alpha)$ and $(N',\{L'_t\}_t, \alpha')$ of two representations in $\Rep_k Q$, a morphism is given by
\begin{enumerate}
\item a natural transformation $\beta: N \Rightarrow N'$,
\item a natural transformation $\gamma: \oplus_t L_t \Rightarrow \oplus_t L'_t$
\end{enumerate}
such that the following diagram commutes.
$$\xymatrix{
\oplus_t L_t(g-) \ar[r]^-\alpha \ar[d]_{\gamma \bullet 1_g} & N(f-) \ar[d]^{\beta \bullet 1_f}\\
\oplus_t L'_t(g-) \ar[r]_-{\alpha'} & N'(f-)
}$$

As in the proof of Proposition \ref{proposition:kQIsGood}, the category $\rep_k Q$ has a similar description obtained by requiring $N$ and $L_t$ to be finitely presented representations.  If $Q$ is a strongly locally finite quiver, then $\rep_k Q$ has Serre duality.

Note that even when $\mod kQ_u$ and $\mod \left( \oplus_{t \in Q_t} k\LL_t \right)$ are well-understood, the last commutative diagram that is required can make the category $\rep_k Q$ contain a wild hereditary category.

\begin{example}
The category of finitely generated representations of the thread quiver
$$\xymatrix@R=15pt@C=25pt{&&\cdot \\
\cdot \ar[r] & \cdot \ar[r] & \cdot \ar@{..>}[r] \ar[u] & \cdot}$$
contains the representations of a wild quiver as a full and exact subcategory (see Proposition \ref{proposition:SemiHereditaryExact}).
\end{example}

As another way to describe the category of representations of thread quivers, we give the following proposition.

\begin{proposition}
Let $\aa$ be a finite $k$-variety.  Let $\{\aa_i\}_i$ be the filtered system of all subcategories of $\aa$ such that $\ind \aa_i$ is finite.  Let $\{\mod \aa_i\}_i$ be the filtered system where the maps are given by $\aa_j \otimes_{\aa_i} -: \mod \aa_i \to \mod \aa_j$ whenever $\aa_i \subseteq \aa_j \subseteq \aa$.  There is an equivalence
$$2\lim\limits_{\stackrel{\rightarrow}{i}} \mod \aa_i \cong \mod \aa$$
induced by $- \otimes_{\aa_i} \aa: \mod \aa_i \to \mod \aa$.

When $\aa$ is semi-hereditary, these functors $- \otimes_{\aa_i} \aa$ are exact.
\end{proposition}

\begin{proof}
It follows from Lemma \ref{lemma:TensorIsFF} that the functors $- \otimes_{\aa_i} \aa$ are fully faithful, hence the functor $2\lim\limits_{\rightarrow} \mod \aa_i \to \mod \aa$ is fully faithful as well.  It is easy to see that the functor is essentially surjective, hence it is an equivalence.

If $\aa$ is semi-hereditary, then Proposition \ref{proposition:SemiHereditaryExact} yields that the functors $- \otimes_{\aa_i} \aa$ are exact.
\end{proof}

\begin{remark}
The above proposition also holds when the 2-colimit is taken in the 2-category $\Cat$ of all small categories (see \cite[Proposition A.5.5]{Waschkies04}).
\end{remark}

\begin{example}
Let $Q$ be either
\begin{enumerate}
\item a Dynkin quiver, or
\item a thread quiver of the form $\xymatrix@R=15pt@C=25pt{\cdot \ar@{..>}[r]^{\PP}& \cdot}$ or $\xymatrix@R=15pt@C=25pt{\cdot \ar[r]& \cdot \ar@{..>}[r]^{\PP}& \cdot\\ & \cdot \ar[u]}$,
\end{enumerate}
where $\PP$ is a linearly ordered poset, then $\rep Q$ is a directed hereditary category with Serre duality.  It has been shown in \cite{vanRoosmalen06} that all directed hereditary categories with Serre duality are, up to derived equivalence, of this form.
\end{example}

\begin{example}
Let $Q$ be the thread quiver given by
$$\xymatrix{x \ar@<2pt>@{..>}[r] \ar@<-2pt>[r] & y}$$
The category $\rep Q$ has been discussed in \cite{vanRoosmalen09}, where it is shown that $\rep Q$ contains a so-called big tube (defined in \cite{vanRoosmalen09}).

By replacing the thread arrow $\xymatrix{\ar@{..>}[r]&}$ by a linearly ordered $A_n$ quiver (where $n \geq 2$), we obtain a quiver $Q'$ whose path algebra $kQ'$ is a hereditary canonical algebra (as defined in \cite{Ringel84}) and
the category of finite dimensional modules over this quiver can be interpreted (up to derived equivalence) as the category of coherent sheaves on a weighted projective line of weight type $(n-2)$ (see \cite{GeigleLenzing87}, see also \cite{Lenzing07} for a more recent account of weighted projective lines).  Similarly, the category $\rep_k Q$ can then be thought of as being derived equivalent to the category of coherent sheaves on a weighted projective line of weight type $(\infty)$.
\end{example}
\appendix
\section{2-Pushouts}\label{Appendix}

Let $\II$ be the 1-category $\Free(2 \stackrel{s}{\leftarrow} 1 \stackrel{t}{\rightarrow} 3)$ and $\aa: \II \to \kVar$ a 2-functor.  The 2-colimit of $\aa$ is called a \emph{2-pushout}.

To specify a 2-natural transformation from $\aa$ to $\CC \in \kVar$, one needs three functors ($\ff_i: \aa(i) \to \CC$, i=1,2,3) and two natural equivalences ($\ff_1 \stackrel{\sim}{\Rightarrow} \ff_2 \circ \aa(s)$ and $\ff_1 \stackrel{\sim}{\Rightarrow} \ff_3 \circ \aa(t)$).  Up to invertible modification, we may assume that $\ff_1 = \ff_2 \circ \aa(s)$ so that it suffices to give only the functors $\ff_2$ and $\ff_3$, and a natural equivalence $\ff_2 \circ \aa(s) \stackrel{\sim}{\Rightarrow} \ff_3 \circ \aa(t)$.

The following theorem shows a connection between 2-pushouts and adjoints.

\begin{theorem}\label{theorem:Pushout}
Consider the 2-pushout
$$\xymatrix{\aa \ar[r]^f \ar[d]_g \xtwocell[rd]{}\omit{^<0>\alpha}& \bb \ar@{-->}[d]^i\\
\cc \ar@{-->}[r]_j & \pp}$$
in any strict 2-category.  If $g$ is fully faithful and has a left (right) adjoint, then $i$ is fully faithful and has a left (right) adjoint.
\end{theorem}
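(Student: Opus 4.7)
The plan is to build a left adjoint $i_L\colon \pp \to \bb$ to $i$ via the universal property of the 2-pushout, and then to deduce that $i$ is fully faithful as a byproduct. Write $g_L \dashv g$ with unit $\eta\colon 1_\cc \to g \circ g_L$ and counit $\epsilon\colon g_L \circ g \to 1_\aa$; since $g$ is fully faithful, $\epsilon$ is an isomorphism.

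First I would define $i_L$ using the universal property: a 1-cell $\pp \to \bb$ corresponds (up to invertible modification) to a pair of 1-cells $F_\bb\colon \bb \to \bb$, $F_\cc\colon \cc \to \bb$ together with a natural isomorphism $F_\cc \circ g \Rightarrow F_\bb \circ f$ compatible with $\alpha$. Taking $F_\bb = 1_\bb$, $F_\cc = f \circ g_L$, and the natural isomorphism $1_f \bullet \epsilon\colon f \circ g_L \circ g \to f$ produces a functor $i_L\colon \pp \to \bb$ together with canonical natural isomorphisms $\phi_\bb\colon i_L \circ i \cong 1_\bb$ and $\phi_\cc\colon i_L \circ j \cong f \circ g_L$. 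I then take the counit to be $\epsilon' := \phi_\bb$; it is invertible by construction, and so, once the adjunction $i_L \dashv i$ is in place, the standard fact that a right adjoint is fully faithful iff its counit is invertible will show that $i$ is fully faithful.

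Next I would construct the unit $\eta'\colon 1_\pp \to i \circ i_L$ using Remark~\ref{remark:Modifications}: a 2-cell between 1-cells out of $\pp$ amounts to giving a modification between the corresponding 2-natural transformations. On the $\bb$-component I take $\eta'_\bb := (1_i \bullet \phi_\bb)^{-1}\colon i \to i \circ i_L \circ i$, and on the $\cc$-component I take the composite
$$\eta'_\cc\colon j \xrightarrow{1_j \bullet \eta} j \circ g \circ g_L \xrightarrow{\alpha^{-1} \bullet 1_{g_L}} i \circ f \circ g_L \xrightarrow{1_i \bullet \phi_\cc^{-1}} i \circ i_L \circ j.$$
The modification compatibility between these components, and both triangle identities for $i_L \dashv i$, reduce (after precomposing with $i$ and $j$, which together detect 2-cells on $\pp$ by Remark~\ref{remark:Modifications}) to the defining equations $\phi_\bb, \phi_\cc$ and the triangle identity $(\epsilon \bullet 1_{g_L}) \circ (1_{g_L} \bullet \eta) = 1_{g_L}$ for $g_L \dashv g$. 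The right-adjoint case is formally dual, replacing $g_L$ by $g_R$ and interchanging the roles of unit and counit.

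The principal technical obstacle is the bookkeeping needed to pass from an equality of 2-cells on $\pp$ to an equality of the two components of a modification, using the explicit description of $\pp$ recalled in Section~\ref{subsection:2Colimits}. Once this transfer is set up, the compatibility of $\eta'$ and each triangle identity become instances of a single triangle identity for $g_L \dashv g$. The only essential use of the fully faithful hypothesis on $g$ is the invertibility of $\epsilon$, without which one could not define the natural isomorphism $1_f \bullet \epsilon$ needed for $i_L$, nor conclude that $i$ itself is fully faithful.
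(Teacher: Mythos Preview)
Your proposal is correct and follows essentially the same approach as the paper: construct $i_L$ from the data $(1_\bb,\, f\circ g_L,\, 1_f\bullet\epsilon)$ via the universal property, take the counit to be the resulting invertible 2-cell $i_L\circ i\cong 1_\bb$, build the unit as a modification with components $1_i\bullet\gamma$ on $\bb$ and $1_j\bullet\eta_g$ (followed by the structure isomorphisms) on $\cc$, and verify the triangle identities componentwise. One small sharpening: the first triangle identity $(1\bullet\epsilon')\circ(\eta'\bullet 1)=1$ follows immediately from your definition $\eta'_\bb=(1_i\bullet\phi_\bb)^{-1}$ and $\epsilon'=\phi_\bb$, without invoking any triangle identity for $g_L\dashv g$; only the second triangle identity, checked on the $\cc$-component, actually reduces to $(\epsilon\bullet 1_{g_L})\circ(1_{g_L}\bullet\eta)=1_{g_L}$.
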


\begin{proof}
Consider the first diagram in Figure \ref{fig:LeftAdjointOfPsi}, where the natural transformation is given by
$$1_{f} \bullet \epsilon_g: f \circ g_L \circ g \stackrel{\sim}{\Rightarrow} f,$$
and where $\epsilon_g: g_L \circ g \Rightarrow 1$ is the counit of the adjunction $(g_L,g)$. We can complete this diagram as shown in Figure \ref{fig:LeftAdjointOfPsi}.  This gives the following identity
\begin{equation}\label{equation:AlphaBetaGamma}
(\gamma^{-1} \bullet 1_f) \circ (1_{i_L} \bullet \alpha) \circ (\beta \bullet 1_g) = 1_f \bullet \epsilon_g.
\end{equation}

We claim $i_L: \pp \to \bb$ is left adjoint to $i$.  To prove this, we shall define a unit $\eta:  1_{\pp} \Rightarrow i \circ i_L$ and a counit  $\epsilon: i_L \circ i \Rightarrow 1_{\bb}$ and show that
$$(1 \bullet \epsilon)\circ(\eta \bullet 1) = 1 \mbox{ and } (\epsilon \bullet 1)\circ(1 \bullet \eta) = 1.$$

\begin{figure}
	\centering
	\exa
	$$\xymatrix@+20pt{
\aa \ar[r]^-{f} \ar[d]_{g} \xtwocell[rrdd]{}\omit{^<0>}& {\bb} \ar@/^2pc/@{=}[rdd] \\
\cc \ar@/_2pc/[rrd]_{f \circ g_L} \\
&& {\bb}}$$
	\exb
		$$\xymatrix@+20pt{
\aa \ar[r]^-{f} \ar[d]_{g} \xtwocell[rd]{}\omit{^<0>\alpha}& {\bb} \ar[d]^{i} \ar@/^2pc/@{=}[rdd] \xtwocell[rdd]{}\omit{<0>\gamma}\\
\cc \ar[r]_-{j} \ar@/_2pc/[rrd]_{f \circ g_L} \xtwocell[rrd]{}\omit{^<0>\beta}& \pp\ar@{-->}[rd]_{i_L} \\
&& {\bb}}$$
   \exc
	\caption{Construction of $i_L$}
	\label{fig:LeftAdjointOfPsi}
\end{figure}

\begin{figure}
	\centering
	\exa
		$$\xymatrix@+20pt{
\aa \ar[r]^-{f} \ar[d]_{g} \xtwocell[rd]{}\omit{^<0>\alpha} & {\bb} \ar@{-->}[d]^{i} \ar@/^2pc/[rdd]^{i} \xtwocell[rdd]{}\omit{=} \\
\cc \ar@{-->}[r]_-{j} \ar@/_2pc/[rrd]_{j} \xtwocell[rrd]{}\omit{=} & \pp\ar@{=}[rd] \\
&& {\pp}}$$
  \exb
		$$\xymatrix@+20pt{
\aa \ar[r]^-{f} \ar[d]_{g} \xtwocell[rd]{}\omit{^<0>\alpha} & {\bb} \ar@{-->}[d]^{i} \ar@/^2pc/[rdd]^{i} \xtwocell[rdd]{}\omit{<0> 1 \bullet \gamma} \\
\cc \ar@{-->}[r]_-{j} \ar@/_2pc/[rrd]_{j \circ g \circ g_L} \xtwocell[rrd]{}\omit{^<0>\delta}& \pp\ar@{-->}_{i \circ i_L}[rd] \\
&& {\pp}}$$
  \exc
	\caption{Construction of unit}
	\label{fig:UnitOfPsi}
\end{figure}

We define $\epsilon = \gamma^{-1}: i_L \circ i \stackrel{\sim}{\Rightarrow} 1_{\bb}$ with $\gamma$ as in the second diagram in Figure \ref{fig:LeftAdjointOfPsi}.  For the definition of $\eta$, consider the diagrams in Figure \ref{fig:UnitOfPsi}, where the natural transformations on the right-hand side are given by
\begin{eqnarray*}
1_{i} \bullet \gamma &:& i \circ 1_\bb \Rightarrow i \circ i_L \circ i \\
\delta=(1_{i} \bullet \beta) \circ (\alpha \bullet 1_{g_L}) &:& j \circ g \circ g_L \Rightarrow i \circ f \circ g_L \Rightarrow i \circ i_L \circ j
\end{eqnarray*}
There is a modification, going from the 2-natural transformation on the left-hand side of Figure \ref{fig:2ndEquation} to the one on the right-hand side, given by
\begin{eqnarray*}
1_{i} &:& i \Rightarrow i \\
1_{j} \bullet \eta_g&:& j \circ 1_\cc \Rightarrow j \circ g \circ g_L
\end{eqnarray*}
where $\eta_g: 1 \Rightarrow g \circ g_L$ is the unit of the adjunction $(g_L, g)$.  This modification corresponds to a natural transformation $\eta: 1_{\pp} \Rightarrow i \circ i_L$, satisfying the following commutative diagrams (see Remark \ref{remark:Modifications})

$$\xymatrix@C+20pt@R+5pt{
i \ar@{=}[r]\ar@{=}[d] & i \ar@{=>}[d]^{1 \bullet \gamma} & {j} \ar@{=>}[r]^-{1 \bullet \eta_{g}} \ar@{=}[d] & {j \circ g \circ g_L}\ar@{=>}[d]^{\delta} \\
i \ar@{=>}[r]_-{\eta \bullet 1} & {i \circ i_L \circ i} & {j} \ar@{=>}[r]_-{\eta \bullet 1_j} & i \circ i_L \circ j}$$

From the first commutative diagram, we obtain
$$(1 \bullet \epsilon)\circ(\eta \bullet 1) = (1 \bullet \gamma)^{-1}\circ(\eta \bullet 1) = 1.$$

\begin{figure}
	\centering
	\exa
		$$\xymatrix@+20pt{
\aa \ar[r]^-{f} \ar[d]_{g} \xtwocell[rd]{}\omit{^<0>\alpha} & {\bb} \ar@{-->}[d]^{i} \ar@/^2pc/[rrddd]^{i_L \circ i} \xtwocell[rrddd]{}\omit{=} \\
\cc \ar@{-->}[r]_-{j} \ar@/_2pc/[rrrdd]_{i_L \circ j} \xtwocell[rrrdd]{}\omit{=} & \pp \ar@{=}[rd] \\
&& {\pp} \ar[rd]|-{i_L} \\
&&& \bb}$$
  \exb
		$$\xymatrix@+20pt{
\aa \ar[r]^-{f} \ar[d]_{g} \xtwocell[rd]{}\omit{^<0>\alpha} & {\bb} \ar@{-->}[d]^{i} \ar@/^2pc/[rrddd]^{i_L \circ i} \xtwocell[rrddd]{}\omit{<-1>} \\
\cc \ar@{-->}[r]_-{j} \ar@/_2pc/[rrrdd]_{i_L \circ j \circ g \circ g_L} \xtwocell[rrrdd]{}\omit{^<0>} & \pp\ar[rd]|-{i \circ i_L} \\
&& {\pp} \ar[rd]|-{i_L} \\
&&& \bb}$$
  \exc
	\caption{Diagrams occurring in the definition of the modification $\Lambda$}
	\label{fig:2ndEquation}
\end{figure}

To show the other equation, and establish the adjointness of $i$ and $i_L$, we add the functor $i_L: \pp \to \bb$ to the diagrams in Figure \ref{fig:UnitOfPsi} (see Figure \ref{fig:2ndEquation}).  The natural transformations of the second diagram of Figure \ref{fig:2ndEquation} are given by
\begin{eqnarray*}
1_{i_L} \bullet \delta &:&  i_L \circ j \circ g \circ g_L \Rightarrow i_L \circ i \circ i_L \circ j\\
1_{i_L} \bullet 1_{i} \bullet \gamma &:& i_L \circ i \Rightarrow i_L \circ i \circ i_L \circ i
\end{eqnarray*}

The modification going from the diagram on the left to the diagram on the right in Figure \ref{fig:UnitOfPsi}  induces a modification $\Lambda$ given by
\begin{eqnarray*}
1_{i_L} \bullet 1_j \bullet \eta_{g}&:& i_L \circ j \Rightarrow i_L \circ j \circ g \circ g_L \\
1 &:& i_L \circ i \Rightarrow i_L \circ i
\end{eqnarray*}
which corresponds to the natural transformation $1 \bullet \eta: i_L \Rightarrow i_L \circ i \circ i_L$.

\begin{figure}
	\centering
	\exa
		$$\xymatrix@+20pt{
\aa \ar[r]^-{f} \ar[d]_{g} \xtwocell[rd]{}\omit{^<0>\alpha} & {\bb} \ar@{-->}[d]^{i} \ar@/^2pc/[rrddd]^{i_L \circ i} \xtwocell[rrddd]{}\omit{<-1>} \\
\cc \ar@{-->}[r]_-{j} \ar@/_2pc/[rrrdd]_{i_L \circ j \circ g \circ g_L} \xtwocell[rrrdd]{}\omit{^<0>} & \pp\ar[rd]|-{i_L} \\
&& {\bb} \ar[rd]|-{i_L \circ i} \\
&&& \bb}$$
  \exb
		$$\xymatrix@+20pt{
\aa \ar[r]^-{f} \ar[d]_{g} \xtwocell[rd]{}\omit{^<0>\alpha} & {\bb} \ar@{-->}[d]^{i} \ar@/^2pc/[rrddd]^{i_L \circ i} \xtwocell[rrddd]{}\omit{=} \\
\cc \ar@{-->}[r]_-{j} \ar@/_2pc/[rrrdd]_{i_L \circ j} \xtwocell[rrrdd]{}\omit{=} & \pp \ar[rd]|-{i_L} \\
&& {\bb} \ar@{=}[rd] \\
&&& \bb}$$
  \exc
	\caption{Diagrams occurring in the definition of the modification $\Omega$}
	\label{fig:PsiAdjoint2}
\end{figure}

Likewise, there is a modification $\Omega$ (going from the 2-natural transformation on the left--hand side of Figure \ref{fig:PsiAdjoint2} to the 2-natural transformation on the right--hand side) given by
\begin{eqnarray*}
(\epsilon \bullet 1) \circ (1 \bullet \delta) &:& i_L \circ j \circ g \circ g_L \Rightarrow i_L \circ j \\
1 &:& i_L \circ i \Rightarrow i_L \circ i
\end{eqnarray*}
which induces the following commutative diagrams
$$\xymatrix@C+20pt@R+5pt{
i_L \circ i \ar@{=>}[r]^-{\gamma \bullet 1_{i_L} \bullet 1_{i}}\ar@{=}[d] & i_L \circ i \circ i_L \circ i \ar@{=>}[d]^{\epsilon \bullet 1_{i_L} \bullet 1_i} & {i_L \circ j \circ g \circ g_L} \ar@{=>}[r]^-{1_{i_L} \bullet \delta} \ar@{=>}[d]^{(\epsilon \bullet 1_{i_L} \bullet 1_j) \circ (1_{i_L} \bullet \delta)} & {i_L \circ i \circ i_L \circ j}\ar@{=>}[d]^{\epsilon \bullet 1_{i_L} \bullet 1_j} \\
i_L \circ i \ar@{=}[r] & {i \circ i_L} & {i_L \circ j} \ar@{=}[r] & {i_L \circ j}}$$
such that $\Omega$ corresponds to the natural transformation $\epsilon \bullet 1_{i_L}: i_L \circ i \circ i_L \rightarrow i_L$.

We claim that $\Omega \circ \Lambda = 1$.  The only nontrivial fact to check is that $\Omega \circ \Lambda$ induces the identity  on $i_L \circ j: \cc \to \bb$.  This follows from
\begin{eqnarray*}
&& (\epsilon \bullet 1_{i_L} \bullet 1_j) \circ (1_{i_L} \bullet \delta) \circ (1_{i_L} \bullet 1_j \bullet \eta_g) \\
&=&(\epsilon \bullet 1_{i_L} \bullet 1_j) \circ (1_{i_L} \bullet ((1_i \bullet \beta) \circ (\alpha \bullet 1_{g_L}))) \circ (1_{i_L} \bullet 1_j \bullet \eta_g) \\
&=& (\epsilon \bullet 1_{i_L} \bullet 1_j) \circ (1_{i_L} \bullet 1_i \bullet \beta) \circ (1_{i_L} \bullet \alpha \bullet 1_{g_L}) \circ (1_{i_L} \bullet 1_j \bullet \eta_g) \\
&=& (\epsilon \bullet \beta) \circ (1_{i_L} \bullet \alpha \bullet 1_{g_L}) \circ (1_{i_L} \bullet 1_j \bullet \eta_g) \\
&=& \beta \circ (\epsilon \bullet 1_f \bullet 1_{g_L}) \circ (1_{i_L} \bullet \alpha \bullet 1_{g_L}) \circ (1_{i_L} \bullet 1_j \bullet \eta_g) \\
&=& \beta \circ (1_f \bullet \epsilon_g \bullet 1_{g_L}) \circ (\beta^{-1} \bullet 1_g \bullet 1_{g_L}) \circ (1_{i_L} \bullet 1_j \bullet \eta_g) \\
&=& \beta \circ (1_f \bullet \epsilon_g \bullet 1_{g_L}) \circ (1_f \bullet 1_{g_L} \bullet \eta_g) \circ \beta^{-1} \\
&=& \beta \circ (1_f \bullet 1_{g_L}) \circ \beta^{-1} \\
&=& 1_{i_L} \bullet 1_j
\end{eqnarray*}
where we have used equations (\ref{equation:Compatibility}) and (\ref{equation:AlphaBetaGamma}).  Since $\Omega \circ \Lambda = 1$, we find that $(\epsilon \bullet 1)\circ(1 \bullet \eta) = 1$ and hence we may conclude that $i_L$ is indeed left adjoint to $i$.  Since $\epsilon$ is invertible, the functor $i$ is fully faithful.

Similarly, one verifies that the functor $i_R$, defined in Figure \ref{fig:RightAdjointOfPsi}, is right adjoint to $i$.

\begin{figure}
	\centering
$$\xymatrix@+20pt{
\aa \ar[r]^-{f} \ar[d]_{g} \xtwocell[rd]{}\omit{^<0>\alpha}& {\bb} \ar[d]^{i} \ar@/^2pc/@{=}[rdd] \xtwocell[rdd]{}\omit{<-1>}\\
\cc \ar[r]_-{j} \ar@/_2pc/[rrd]_{j \circ g_R} \xtwocell[rrd]{}\omit{^<0>}& \pp\ar@{-->}[rd]_{i_R} \\
&& {\bb}}$$
 \caption{Construction of $i_R$}
 \label{fig:RightAdjointOfPsi}
\end{figure}
\end{proof}

\providecommand{\bysame}{\leavevmode\hbox to3em{\hrulefill}\thinspace}
\providecommand{\MR}{\relax\ifhmode\unskip\space\fi MR }
\providecommand{\MRhref}[2]{%
  \href{http://www.ams.org/mathscinet-getitem?mr=#1}{#2}
}
\providecommand{\href}[2]{#2}


\begin{thebibliography}{10}

\bibitem{Auslander74}
Maurice Auslander, \emph{Representation theory of {A}rtin algebras. {I}}, Comm.
  Algebra \textbf{1} (1974), 177--268.

\bibitem{AuslanderReiten74}
Maurice Auslander and Idun Reiten, \emph{Stable equivalence of dualizing
  {$R$}-varieties}, Advances in Math. \textbf{12} (1974), 306--366.

\bibitem{AuslanderReiten75}
\bysame, \emph{Stable equivalence of dualizing {$R$}-varieties. {II}.
  {H}ereditary dualizing {$R$}-varieties}, Advances in Math. \textbf{17}
  (1975), 93--121.

\bibitem{AuslanderSmalo81}
Maurice Auslander and Sverre~O. Smal{\o}, \emph{Almost split sequences in
  subcategories}, J. Algebra \textbf{69} (1981), no.~2, 426--454.

\bibitem{Benabou67}
Jean B{\'e}nabou, \emph{Introduction to bicategories}, Reports of the {M}idwest
  {C}ategory {S}eminar, Springer, Berlin, 1967, pp.~1--77.

\bibitem{BergVanRoosmalen08}
Carl~Fredrik Berg and Adam-Christiaan van Roosmalen, \emph{The quiver of
  projectives in hereditary categories with {S}erre duality}, J. Pure Appl.
  Algebra \textbf{214} (2010), no.~7, 1082--1094.

\bibitem{BergVanRoosmalen10}
\bysame, \emph{Hereditary categories with {S}erre duality which are generated
  by preprojectives}, J. Algebra \textbf{335} (2011), 220--257.

\bibitem{BondalKapranov89}
A.~I. Bondal and M.~M. Kapranov, \emph{Representable functors, {S}erre
  functors, and reconstructions}, Izv. Akad. Nauk SSSR Ser. Mat. \textbf{53}
  (1989), no.~6, 1183--1205, 1337.

\bibitem{Borceux94}
Francis Borceux, \emph{Handbook of categorical algebra. 1}, Encyclopedia of
  Mathematics and its Applications, vol.~50, Cambridge University Press,
  Cambridge, 1994, Basic category theory.

\bibitem{Borceux94b}
\bysame, \emph{Handbook of categorical algebra. 2}, Encyclopedia of Mathematics
  and its Applications, vol.~51, Cambridge University Press, Cambridge, 1994,
  Categories and structures.

\bibitem{Chen06}
Xiao-Wu Chen, \emph{Generalized {S}erre duality}, J. Algebra \textbf{328}
  (2011), 268--286.

\bibitem{GeigleLenzing87}
Werner Geigle and Helmut Lenzing, \emph{A class of weighted projective lines
  arising in representation theory of finite-dimensional algebras},
  Singularities, representation of algebras, and vector bundles (Lambrecht,
  1985), Lecture Notes in Math., vol. 1273, Springer, Berlin, 1987,
  pp.~265--297.

\bibitem{Happel88}
Dieter Happel, \emph{Triangulated categories in the representation theory of
  finite dimensional algebras}, London Mathematical Society Lecture Notes
  Series, vol. 119, Cambridge University Press, Cambridge, 1988.

\bibitem{Happel01}
\bysame, \emph{A characterization of hereditary categories with tilting
  object}, Invent. Math. \textbf{144} (2001), no.~2, 381--398.

\bibitem{IyamaYoshino08}
Osamu Iyama and Yuji Yoshino, \emph{Mutations in triangulated categories and
  rigid {C}ohen-{M}acaulay modules}, Invent. Math \textbf{172} (2008), no.~1,
  117--168.

\bibitem{Kelly05}
G.M. Kelly, \emph{Basic concepts of enriched category theory}, Repr. Theory
  Appl. Categ. (2005), no.~10, vi+137 pp. (electronic), Reprint of the 1982
  original [Cambridge Univ. Press, Cambridge].

\bibitem{Lenzing07}
Helmut Lenzing, \emph{Hereditary categories}, Handbook of Tilting Theory
  (Lidia~Angeleri H{\"u}gel, Dieter Happel, and Henning Krause, eds.), London
  Mathematical Society Lecture Notes Series, vol. 332, Cambridge University
  Press, Cambridge, 2007, pp.~105--146.

\bibitem{MacLane71}
Saunders MacLane, \emph{Categories for the working mathematician}, Graduate
  Texts in Mathematics, Vol. 5, Springer-Verlag, New York, 1971.

\bibitem{MazorchukStroppel08}
Volodymyr Mazorchuk and Catharina Stroppel, \emph{Projective-injective modules,
  {S}erre functors and symmetric algebras}, J. Reine Angew. Math. \textbf{616}
  (2008), 131--165.

\bibitem{ReVdB02}
I.~Reiten and M.~Van~den Bergh, \emph{Noetherian hereditary abelian categories
  satisfying {S}erre duality}, J. Amer. Math. Soc. \textbf{15} (2002), no.~2,
  295--366.

\bibitem{Reiten02}
Idun Reiten, \emph{Hereditary categories with {S}erre duality}, Representations
  of algebra. Vol. I, II, Bejing Norm. Univ. Press, Bejing, 2002, pp.~109--121.

\bibitem{Ringel84}
Claus~Michael Ringel, \emph{Tame algebras and integral quadratic forms},
  Lecture Notes in Mathematics, vol. 1099, Springer-Verlag, Berlin, 1984.

\bibitem{Ringel02}
\bysame, \emph{The diamond category of a locally discrete ordered set},
  Representations of algebra. Vol. I, II, Beijing Norm. Univ. Press, Beijing,
  2002, pp.~387--395.

\bibitem{Ringel02b}
\bysame, \emph{A ray quiver construction of hereditary {A}belian categories
  with {S}erre duality}, Representations of algebra. Vol. I, II, Beijing Norm.
  Univ. Press, Beijing, 2002, pp.~398--416.

\bibitem{Ross80}
Ross Street, \emph{Fibrations in bicategories}, Cahiers Topologie G\'eom.
  Diff\'erentielle \textbf{21} (1980), no.~2, 111--160.

\bibitem{vanRoosmalen06}
Adam-Christiaan van Roosmalen, \emph{Classification of abelian hereditary
  directed categories satisfying {S}erre duality}, Trans. Amer. Math. Soc.
  \textbf{360} (2008), no.~5, 2467--2503.

\bibitem{vanRoosmalen09}
\bysame, \emph{Hereditary uniserial categories with {S}erre duality}, Algebr.
  Represent. Theory \textbf{15} (2012), no.~6, 1291--1322.

\bibitem{Waschkies04}
Ingo Waschkies, \emph{The stack of microlocal perverse sheaves}, Bull. Soc.
  Math. France \textbf{132} (2004), no.~3, 397--462.

\end{thebibliography}
\end{document}